\documentclass[12pt]{amsart}
\usepackage{amsthm,amsopn,amsfonts,amssymb,pb-diagram}
\usepackage{hyperref}
\usepackage{fancyhdr}
\usepackage[dvips]{graphics}

\usepackage{euler, amsfonts, amssymb, latexsym, epsfig, epic}
\usepackage{epstopdf}

\setlength{\oddsidemargin}{0in} \setlength{\evensidemargin}{0in}
\setlength{\marginparwidth}{0in} \setlength{\marginparsep}{0in}
\setlength{\marginparpush}{0in} \setlength{\topmargin}{-.5in}
\setlength{\headheight}{6pt} \setlength{\headsep}{20pt}
\setlength{\footskip}{.3in} \setlength{\textheight}{9.2in}
\setlength{\textwidth}{6.5in} \setlength{\parskip}{4pt}

\usepackage[OT2,OT1]{fontenc}
\newcommand\cyr{%
  \renewcommand\rmdefault{wncyr}%
  \renewcommand\sfdefault{wncyss}%
  \renewcommand\encodingdefault{OT2}%
  \normalfont
  \selectfont}
\DeclareTextFontCommand{\textcyr}{\cyr}

\font\co=lcircle10
\def\boxcross{\ \smash{\lower6.5pt\hbox{\rlap{\hskip4.5pt\vrule height13.5pt}}
                \raise0pt\hbox{\rlap{\hskip-2pt \vrule height.4pt depth0pt
                        width13.5pt}}}\hskip12.7pt}

\def\boxelbow{\ \hskip.1pt\smash{%
               \hbox{\co \hskip 5.5pt\rlap{\mathsurround=0pt\rlap{\mathsurround=0pt\char'006}\lower0.4pt\rlap{\char'004}}
                \lower6.5pt\rlap{\hskip-0.2pt\vrule height3pt}
                \raise3.5pt\rlap{\hskip-0.2pt\vrule height3.2pt}}
                \hbox{%
                  \rlap{\hskip-6.4pt \vrule height.4pt depth0pt
width2.5pt}%
                  \rlap{\hskip4.05pt \vrule height.4pt depth0pt
width3.1pt}}}
                \hskip8.7pt}

\newtheorem{Theorem}{Theorem}[section]

\newtheorem*{Theorem*}{Theorem}
\newtheorem{Lemma}[Theorem]{Lemma}

\newtheorem{Proposition}[Theorem]{Proposition}

\newtheorem{Corollary}[Theorem]{Corollary}

\theoremstyle{definition}

\theoremstyle{remark}

\newcommand{\Gr}{\mathrm{Gr}}
\newcommand{\Grkn}{\Gr_k(\AA^n)}

\newcommand{\from}{\leftarrow}
\newcommand{\onto}{\twoheadrightarrow}
\newcommand{\fromonto}{\twoheadleftarrow}
\newcommand{\into}{\hookrightarrow}
\newcommand\mapsfrom{\reflectbox{$\mapsto$}}

\newcommand{\ZZ}{\mathbb{Z}}
\newcommand{\PP}{\mathbb{P}}

\newcommand{\fixit}[1]{\texttt{*** #1 ***}}

\newcommand\defn[1]{{\bf #1}}
\newcommand\yy{{\bf y}}
\newcommand\iso{\cong}
\newcommand\shift{\textcyr{X}}
\newcommand\sweep{\Psi}
\newcommand\St{{\rm St}}
\newcommand\dom{\,\backslash\,}
\newcommand\barX{{\overline X}}
\newcommand\integers{\ZZ}
\newcommand\codim{{\rm codim\space }}
\newcommand\calP{{\mathcal P}}
\newcommand\naturals{{\mathbb N}}
\newcommand\Kint{K\!\!\!\!\!\int\ }
\newcommand\calO{{\mathcal O}}
\newcommand\dash[1]{-\!\!\! #1\!\!\!-}
\newcommand\bslash{\big\backslash\!\!\!}
\newcommand\fslash{\big /\!\!\!}
\newcommand\union{\cup}

\renewcommand\AA{{\mathbb A}}
\newcommand\junk[1]{}

\begin{document}

\title{Puzzles, positroid varieties, and \\ equivariant $K$-theory
  of Grassmannians}
\author{Allen Knutson}
\address{Department of Mathematics, Cornell University, Ithaca, NY 14853 USA}
\email{allenk@math.cornell.edu}
\thanks{AK was partially supported by NSF grant DMS-0604708.}
\date{\today}

\begin{abstract}
  Vakil studied the intersection theory of Schubert varieties in the
  Grassmannian in a very direct way \cite{Vakil}: he degenerated the
  intersection of a Schubert variety $X_\mu$ and opposite Schubert
  variety $X^\nu$ to a union $\{X^\lambda\}$, with repetition.  
  This degeneration proceeds in stages, and along the way he met a
  collection of more complicated subvarieties, which he identified as
  the closures of certain locally closed sets.

  We show that Vakil's varieties are \emph{positroid varieties}, 
  which in particular shows they are normal, Cohen-Macaulay, have
  rational singularities, and are defined by the vanishing of
  Pl\"ucker coordinates \cite{KLS}. We determine the equations of the Vakil
  variety associated to a partially filled ``puzzle'' (building on the
  appendix to \cite{Vakil}), and extend Vakil's proof to give a geometric
  proof of the puzzle rule from \cite{KT} for equivariant Schubert calculus.

  The recent paper \cite{AGM} establishes (abstractly; without a formula) 
  three positivity results in equivariant $K$-theory of flag manifolds $G/P$.
\junk{
  We demonstrate two of these \fixit{I hope}, by modifying Vakil's
  degeneration of his subschemes to degenerations of certain ideal sheaves
  thereon, and giving a corresponding puzzle rule.
}
  We demonstrate one of these concretely, giving a corresponding puzzle rule.
\end{abstract}

\maketitle

{\footnotesize \tableofcontents}

\markright{\MakeUppercase{
    Puzzles, positroid varieties, and equivariant $K$-theory of Grassmannians}}

\section{Introduction, and statement of results}

\subsection{Schubert varieties and Vakil's geometric shifts}

Fix a Grassmannian $\Grkn$ of $k$-planes in affine $n$-space over a field.
One way to study it is as a quotient of the
\defn{Stiefel manifold} $\St_{k,n} \subseteq M_{k,n}$
of $k\times n$ matrices of full rank $k$; the map $\St_{k,n} \to \Grkn$
taking a matrix to its row span is surjective, and exactly mods out the
left action of $GL(k)$, which is by row operations.

We will use Greek letters $\lambda,\mu,\nu,\ldots$ to mean words 
of length $n$ with $n-k$ $0$s and $k$ $1$s. To each one, we associate
the varieties of matrices
$$ \barX_\lambda := \{ M \in M_{k,n} : \forall j=1,\ldots,n,\
        rank(M_{[1,j]}) \leq \#\text{$1$s in $\lambda$ at or before place $j$
        in $\lambda$} \} $$
$$ \barX^\mu := \{ M \in M_{k,n} : \forall i=1,\ldots,n,\
        rank(M_{[i,n]}) \leq \#\text{$1$s in $\mu$ at or after place $i$
        in $\mu$} \} $$
where $M_{[i,j]}$ indicates the $k\times (j-i+1)$ submatrix using
columns $i,i+1,\ldots,j$ of the $k\times n$ matrix $M$. Then
$$ X_\lambda := GL(k) \dom (\barX_\lambda \cap \St_{k,n}),\quad
   X^\mu := GL(k) \dom (\barX^\mu \cap \St_{k,n}) $$
are \defn{Schubert} and \defn{opposite Schubert varieties} in $\Grkn$,
and have
$$ \codim\ X_\lambda = \dim X^\lambda 
= |\lambda| := \# \big\{(i,j)\ :\ i<j,\ \lambda_i > \lambda_j\big\}. $$
These are well-known to be reduced and irreducible, and the set of 
Schubert varieties gives a $\integers$-basis of the cohomology ring.
Moreover, a cohomology class is effective if and only if it is a nonnegative
combination of Schubert classes. 

The coefficients $c_{\lambda\mu}^\nu$ in the multiplication
$[X_\lambda][X_\mu] = \sum c_{\lambda\mu}^\nu [X_\nu]$
arise in many contexts \cite{FultonSurvey}, and rules for computing
them are generically referred to as ``Littlewood-Richardson rules'',
though we will only apply this term to the Young-tableaux-based such rules.
In ``A geometric Littlewood-Richardson rule'' \cite{Vakil}, Vakil studies
this intersection problem in a very direct way; he degenerates by stages
the \defn{Richardson variety} $X_\lambda^\nu := X_\lambda \cap X^\nu$
to a union of opposite Schubert varieties, in which $X_\mu$ occurs
$c_{\lambda\mu}^\nu$ times.  (To avoid multiplicities cropping up in
his degenerate schemes, after each partial degeneration he must break
into components, and continue to degenerate them separately.)

Specifically, define the \defn{geometric shift} $\shift_{i \to j} X$
of a subscheme $X\subseteq \Grkn$ as the flat limit\footnote{%
  Consider pairs $\{ (t, \exp(t e_{ij}) \cdot x) : t\in \AA^1, x\in X)\}
  \subseteq \AA^1 \times \Grkn$, 
  and let $F$ be the closure in $\PP^1 \times \Grkn$.
  The flat limit is then the scheme-theoretic intersection 
  $F \cap \left( \{\infty\} \times \Grkn\right)$.
  The image of $F$ projected to $\Grkn$ is the ``geometric sweep''
  defined in that same paragraph.}
$\lim_{t\to\infty} \exp(t e_{ij}) \cdot X$, where $e_{ij}$ is a
matrix whose only nonzero entry is at $(i,j)$. (These are related to
the combinatorial shifts pioneered in \cite{EKR}, as we intend to
explain in a separate paper.) 
We define also a related operation on subvarieties
$X\subseteq \Grkn$, the \defn{geometric sweep} $\sweep_{i\to j} X$,
as the closure of $\bigcup_{t\in\AA^1} \exp(t e_{ij}) \cdot X$. 
So $\sweep_{i\to j} X$ is again irreducible,
and either $X = \shift_{i\to j} X = \sweep_{i\to j} X$,
or $\sweep_{i\to j} X$ contains $X$ and $\shift_{i\to j} X$
as rationally equivalent divisors.

We can describe already the principal geometric (rather than 
cohomological or combinatorial) results of this paper:

\begin{Theorem}\label{thm:geom}
  Recall Vakil's ``degeneration order'', 
  the following list of $n\choose 2$ pairs:
  $$ (n\!-\!1 \to n),\ $$
  $$ (n\!-\!2 \to n), (n\!-\!2 \to n\!-\!1),\  $$
  $$ (n\!-\!3 \to n), (n\!-\!3 \to n\!-\!1), (n\!-\!3 \to n\!-\!2),\ $$
  $$ \vdots $$
  $$ (n\to 1), (n\!-\!1\to 1), \qquad\qquad\ldots\qquad\qquad, 
  (4\to 1), (3\to 1), (2\to 1).$$
  Let $\shift_{\#i},\sweep_{\#i}$ denote the shift and sweep operations
  for the $i$th pair in this list.

  Let $X_0$ be a Richardson variety in $\Grkn$.
  As $i$ runs from $1$ to $n\choose 2$, apply $\shift_{\#i}$ to $X_{i-1}$, 
  then let $X_i$ be an irreducible component of $\shift_{\#i} X_{i-1}$.
  Vakil proves \cite[Theorem 5.10, Proposition 5.15]{Vakil} that
  regardless of these choices, each $\shift X_{i-1}$ is generically
  reduced, and has at most two components.  Also, $X_{n\choose 2}$ is
  an opposite Schubert variety.
  
  We generalize his process:
  as $i$ runs from $1$ to $n\choose 2$, apply either $\shift_{\#i}$ or
  $\sweep_{\#i}$ to $X_{i-1}$, then let $X_i$ be an irreducible
  component or (if $\shift_{\#i}$ was used, and the result was
  reducible) the intersection of the two components. Then:
  \begin{enumerate}
  \item Each $\shift_{\#i} X_{i-1}$ is reduced (not just generically reduced).
  \item It is again true that each $\shift_{\#i} X_{i-1}$ has at most
    two components. If there are two, then their intersection is
    reduced and irreducible.
    Again, $X_{n\choose 2}$ is an opposite Schubert variety.  
  \item Each $X_i$ is a ``positroid variety'', which implies \cite{KLS} that
    it is normal, Cohen-Macaulay, has rational singularities, 
    is defined by the vanishing of Pl\"ucker coordinates,
    and has other admirable qualities described in \cite{KLS}.
  \end{enumerate}
\end{Theorem}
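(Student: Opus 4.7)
The plan is to proceed by induction on the step index $i$, proving all three assertions simultaneously. For the base case $i=0$, the required fact that Richardson varieties are positroid varieties is established in \cite{KLS}. For the inductive step, I would assume $X_{i-1}$ is a positroid variety and then show: that $\shift_{\#i} X_{i-1}$ is reduced with at most two irreducible components; that if two components exist, their scheme-theoretic intersection is reduced and irreducible; and that each component, the intersection, and $\sweep_{\#i} X_{i-1}$ are all positroid varieties. Granted these, the three conclusions of the theorem all follow, with the terminal claim that $X_{n\choose 2}$ is an opposite Schubert variety coming from the observation that an irreducible positroid variety stable under the full opposite Borel is necessarily an opposite Schubert variety.

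The first technical step is to translate the operations $\shift_{i \to j}$ and $\sweep_{i \to j}$ into the combinatorial language indexing positroid varieties: bounded affine permutations, or equivalently juggling patterns or Grassmann necklaces. The one-parameter subgroup $\exp(t e_{ij})$ lies inside a standard parabolic, and a positroid variety decomposes combinatorially into finitely many orbits of this parabolic; the sweep picks up the generic orbit, while the flat limit (the shift) sees the closures of the degenerate orbits. I would verify that the outcome matches Vakil's two-component picture: at each step $\shift_{\#i}$ either fixes the variety, or splits it into exactly two positroid components whose intersection is a positroid variety of codimension one in each. This combinatorial verification, modelled on the juggling-pattern moves of \cite{KLS}, is the heart of the argument.

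The reducedness statement in part (1), which strengthens Vakil's ``generically reduced'' to ``reduced'', would follow from the compatible Frobenius splitting of $\Grkn$ constructed in \cite{KLS}, with respect to which every positroid variety is compatibly split. It suffices to show that $\shift_{\#i} X_{i-1}$ coincides set-theoretically with a union of positroid subvarieties; compatibility of the splitting then upgrades this to a scheme-theoretic equality, giving reducedness on the nose and identifying the components. The scheme-theoretic intersection of two compatibly split subvarieties is automatically reduced, so for part (2) it only remains to verify irreducibility, which I would extract from the combinatorial analysis of the previous step.

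The main obstacle I foresee is the move-by-move combinatorial bookkeeping: given the bounded affine permutation indexing $X_{i-1}$, I must exhibit explicitly the bounded affine permutations indexing the components of $\shift_{\#i} X_{i-1}$ and of $\sweep_{\#i} X_{i-1}$, and verify that the intersection of two components is itself indexed by a single bounded affine permutation (rather than being a reducible union of positroid varieties). Vakil's degeneration order is essential here: it is arranged so that at each stage only one ``elementary'' combinatorial move is possible, keeping the number of components at most two and keeping the intersection irreducible. Once this combinatorial description is in hand, Frobenius-splitting compatibility delivers reducedness for free, and the inductive step closes.
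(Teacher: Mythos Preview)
Your inductive scheme has the right shape, but the hypothesis ``$X_{i-1}$ is a positroid variety'' is too weak to push the induction through. An arbitrary positroid variety, hit with an arbitrary shift $\shift_{i\to j}$, need not break into at most two positroid components; the ``at most two'' phenomenon depends on $X_{i-1}$ being of a very particular shape relative to the pair $(i,j)$ about to be used. The paper does not attempt to prove anything about shifts of general positroid varieties. Instead it isolates a strictly smaller class, the \emph{interval} positroid varieties $\Pi_r$ of \S\ref{ssec:interval}--\ref{sec:intvpositroid} (those coming from rank conditions on honest, not cyclic, intervals of columns), and within those a yet smaller class indexed by labeled \emph{puzzle paths} $\gamma$. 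The inductive invariant is ``$X_{i-1} = \Pi_\gamma$ for a puzzle path $\gamma$ whose unfilled region matches stage $i$,'' and the step-by-step analysis (lemmas \ref{lem:not1then0}--\ref{lem:Kmeaning}, theorem \ref{thm:filling}) shows that adding the next rhombus to $\gamma$ exactly encodes the possible outcomes of $\shift_{\#i}$ and $\sweep_{\#i}$. Without isolating such a class, you have no handle on why only two components appear or why their intersection is irreducible.

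Your Frobenius-splitting route to reducedness also has a gap: a compatible splitting on $\Grkn$ does not automatically make the flat limit $\shift_{\#i} X_{i-1}$ compatibly split, so you cannot invoke splitting to upgrade a set-theoretic equality to a scheme-theoretic one. The paper avoids this entirely. Via theorem \ref{thm:rankmatrices} it identifies each $\barPi_r$ with a Kazhdan-Lusztig variety in a finite flag manifold, whence reducedness, irreducibility, and Cohen--Macaulayness follow; lemma \ref{lem:intersectIRvars} then shows intersections of interval rank varieties are reduced unions of the same. In the proof of theorem \ref{thm:filling} one traps $\shift_{i\to j}\Pi_\gamma$ inside the reduced intersection $\Pi_{\sweep\gamma}\cap B_{[i,j-1]\leq r(\gamma)_{i+1,j}}$, shows this upper bound has at most the two components $\Pi_{\gamma_0},\Pi_{\gamma_1}$, and then argues equality. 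The bounded-affine-permutation bookkeeping you propose could in principle be made to work, but only after you have first pinned down the correct subclass of positroid varieties and found an independent reason for reducedness of the limit.
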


Despite the fact that Vakil did not study the sweep operations 
(relevant for equivariant cohomology) 
or the intersections (relevant for $K$-theory),
we will call any variety $\{X_i\}$ constructed in the theorem 
above a \defn{Vakil variety}. So we have, in increasing order of generality,
$\{$Schubert varieties$\} 
\subseteq 
\{$Richardson varieties$\} 
\subseteq 
\{$Vakil varieties$\} 
\subseteq \{$positroid varieties$\}$. In \S \ref{ssec:interval} we
will define yet another class in between the last two.

Our cohomological application of this theorem is to extend Vakil's
rule for the cohomology product to one in equivariant $K$-theory.

\subsection{Puzzles}\label{ssec:puzzleintro}

A \defn{puzzle triangle} is just an equilateral triangle of
side-length $n$, oriented like $\Delta$ (not $\nabla$). 
It has $n+2 \choose 2$ \defn{puzzle vertices}, connected by
$3{n+1 \choose 2}$ \defn{puzzle edges} parallel to the sides, whose 
directions we will call by approximate compass directions E/W, NE/SW, NW\!/SE.
In particular, we may refer to the $n$ \defn{rows} of a puzzle
counted from the top down,
and its \defn{NW/SE columns} and \defn{NE/SW columns}, 
each counted from left to right.
Consider \defn{unlabeled puzzle paths} $\gamma$ that 
(as in figure \ref{fig:initAndTerm}) traverse puzzle edges
\begin{itemize}
\item starting at the top vertex of the puzzle, then
\item head Southeast some distance along the Northeast side of the puzzle,
\item head Southwest some distance through the puzzle,
\item jog one optional step Southeast along an edge called the \defn{kink},
\item continue Southwest until they hit the bottom edge,
\item and go West until they hit the Southwest corner.
\end{itemize}
Since we think of $\gamma$ as a directed path, we will talk about one
edge of $\gamma$ being ``after'' another edge, as traversed
in the order above.
There are two puzzle paths that stay entirely on the boundary:
the \defn{initial path} which follows the NE edge then bottom edge, and
the \defn{final path} which follows the NW edge. 
While we described the kink as optional, except for final paths $\gamma$
we can always take the last SE step to be the kink. 

\begin{figure}[htbp]
  \centering
  \epsfig{file=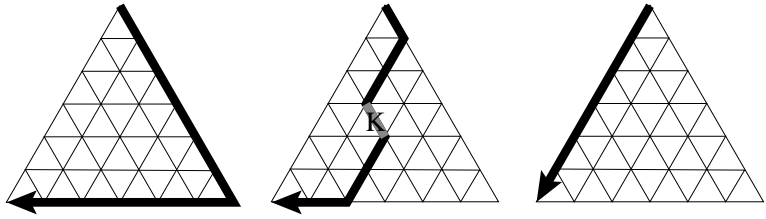,width=4in}
  \caption{The initial path, a more general puzzle path with a $K$ 
    indicating the kink, and the final path.}
  \label{fig:initAndTerm}
\end{figure}

Label the edges along $\gamma$, each with one of four possible labels:
\begin{itemize}
\item $0$
\item $1$
\item $R$ (for Rhombus) -- this may not occur on the outer boundary of
  the puzzle
\item $K$ (for $K$-theory) -- this may only occur on the kink,
  and not on the outer boundary of the puzzle.
\end{itemize}
Only certain labelings are allowed; we detail the conditions in 
\S \ref{sec:variety}. A \defn{puzzle path} will refer to one with
an allowed labeling. Some may be seen in figure \ref{fig:puzzlepathexs}
on p\pageref{fig:puzzlepathexs}. To refer to labeled edges on $\gamma$,
we will talk about the $\dash 0$ edges, the $\bslash R$ edges, etc.

To each puzzle triangle with a (labeled) puzzle path $\gamma$, we will
explain in \S \ref{sec:variety} how to select certain horizontal edges in 
the puzzle triangle, with which to define an upper triangular partial
permutation matrix and, eventually, a Vakil subvariety of the Grassmannian.

Each step of Vakil's geometric algorithm will then correspond to a
small change in $\gamma$, with the whole process going from the
initial path to the final path.  We will record this process by
placing ``puzzle pieces'' in a separate copy of the triangle. 
The proof that Vakil's degenerative geometry is captured by the
combinatorics of the puzzle pieces will be theorem \ref{thm:filling}.

Puzzle pieces come in three types:
\begin{itemize}
\item triangles, which may be rotated: \epsfig{file=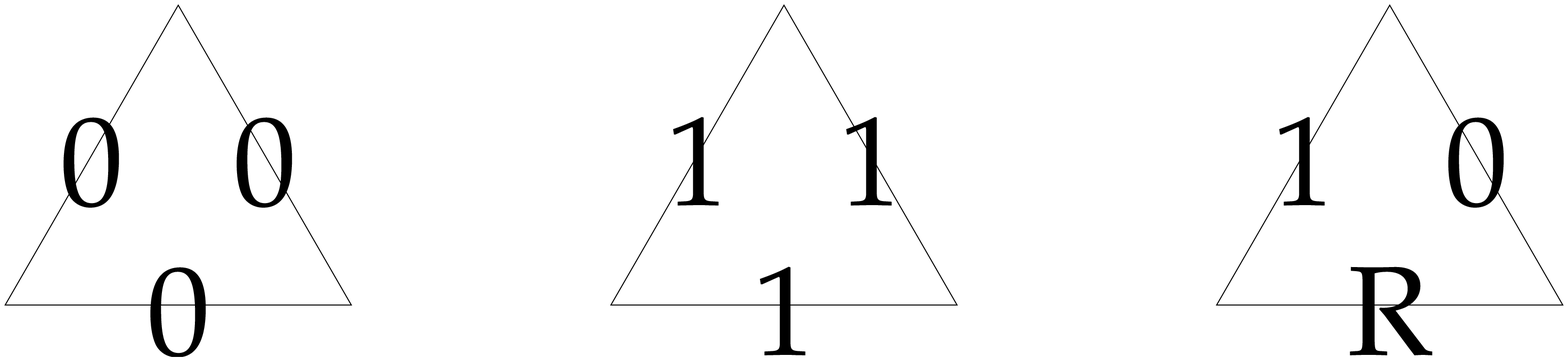,height=.5in}
\item the \defn{equivariant rhombus} \epsfig{file=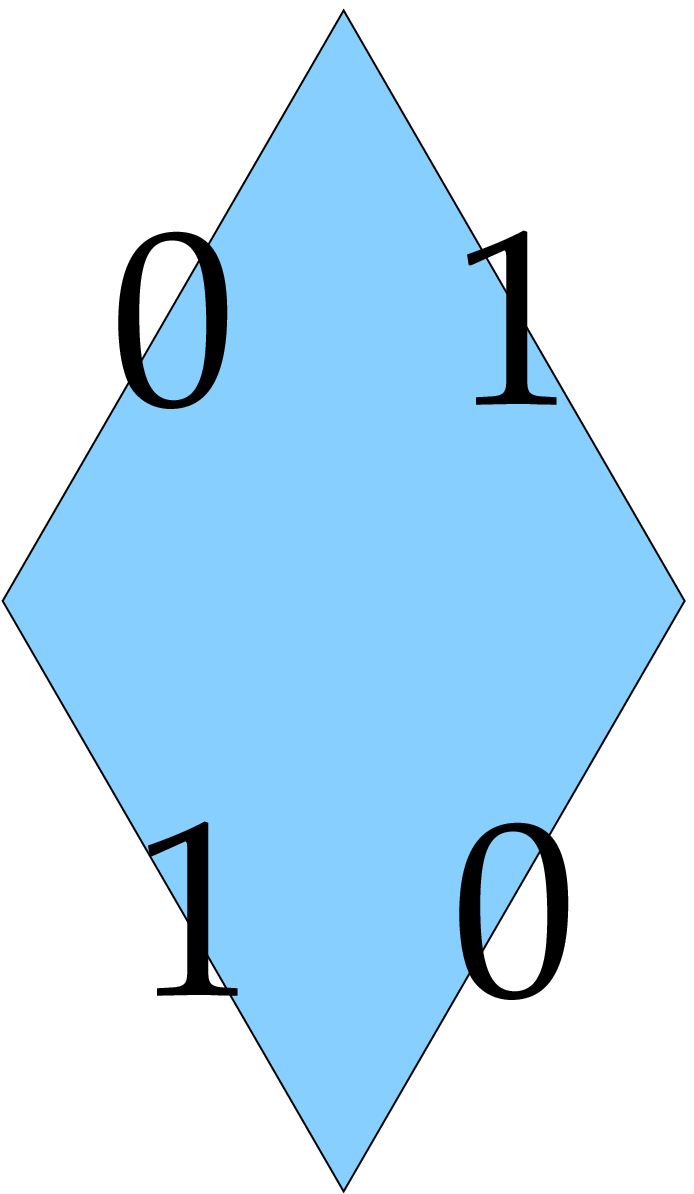,height=.8in},
  and
\item the \defn{top, middle, and bottom $K$-rhombi} 
  \epsfig{file=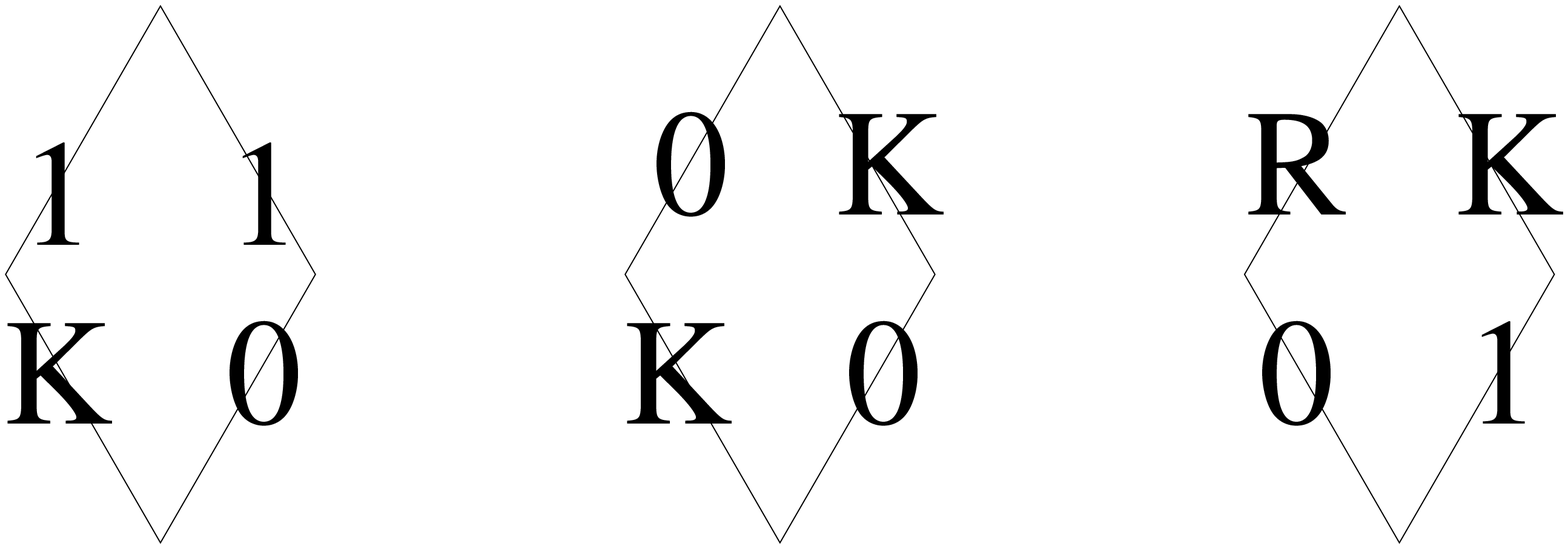,height=.8in}.
\end{itemize}
We will often want a \defn{puzzle rhombus} to refer not only to the
equivariant and $K$-rhombi but also to a $\Delta$ piece atop a $\nabla$ piece,
with the labels on the horizontal edges matching.
In this paper, ``puzzle rhombi'' will always have this vertical orientation.

Define, almost, a \defn{puzzle} to be a tiling by these pieces of a
large triangle such that edge labels match up, and with only $0$,$1$
labels on external edges. 
\junk{
In particular, across the bottom edge are
$n$ triangles, and the rest of the pieces are $n\choose 2$ rhombi.  }
We say ``almost'' because there are two non-local conditions concerning the
placement of $K$-edges, each of which appears on the kink of a (unique)
puzzle path $\gamma$: (1) if $\bslash K$ is due NE of a $\dash 1$, there
must be a $\fslash R$ along $\gamma$ somewhere between them, 
and (2) if indeed $\bslash K$ is NE of an $\fslash R$,
there must be a $\fslash 1$ along $\gamma$ somewhere between them.

If we disallow $K$-rhombi, then we can glue the triangles with
$R$-edges together in pairs to make the rhombi in the \cite{KTW,KT}
formulations of puzzles. 
If we instead glue the $K$-rhombi together along their $\bslash K$s, 
in each aggregate the ``top'' $K$-rhombus will be on top, 
the ``middle'' $K$-rhombus occurring several times in the middle 
(possibly zero), and the ``bottom'' $K$-rhombus on bottom.

In figure \ref{fig:finalpuzex} on p\pageref{fig:finalpuzex} we give
all the puzzles with $0101$ and $1010$ on the NE and S sides.

For each horizontal edge $e$ in the puzzle,
let $i(e)$ denote its NE/SW column and $j(e)$ its NW/SE column.
So if we drop lines Southwest and Southeast from the edge,
they point to the $i(e)$th and $j(e)$th edges on the bottom;
we may refer to the edge or the vertical rhombus it bisects as
being \defn{in position $(i(e),j(e))$}.
If we consider the horizontal edges one NE/SW column at a time, 
rightmost column to leftmost, then down each column
(but skipping the bottom edges),
their $i(e),j(e)$ correspond to the shifts $(i(e)\to j(e))$ 
in Vakil's degeneration order. See figure \ref{fig:degenorder}.

\begin{figure}[htbp]
  \centering
  \epsfig{file=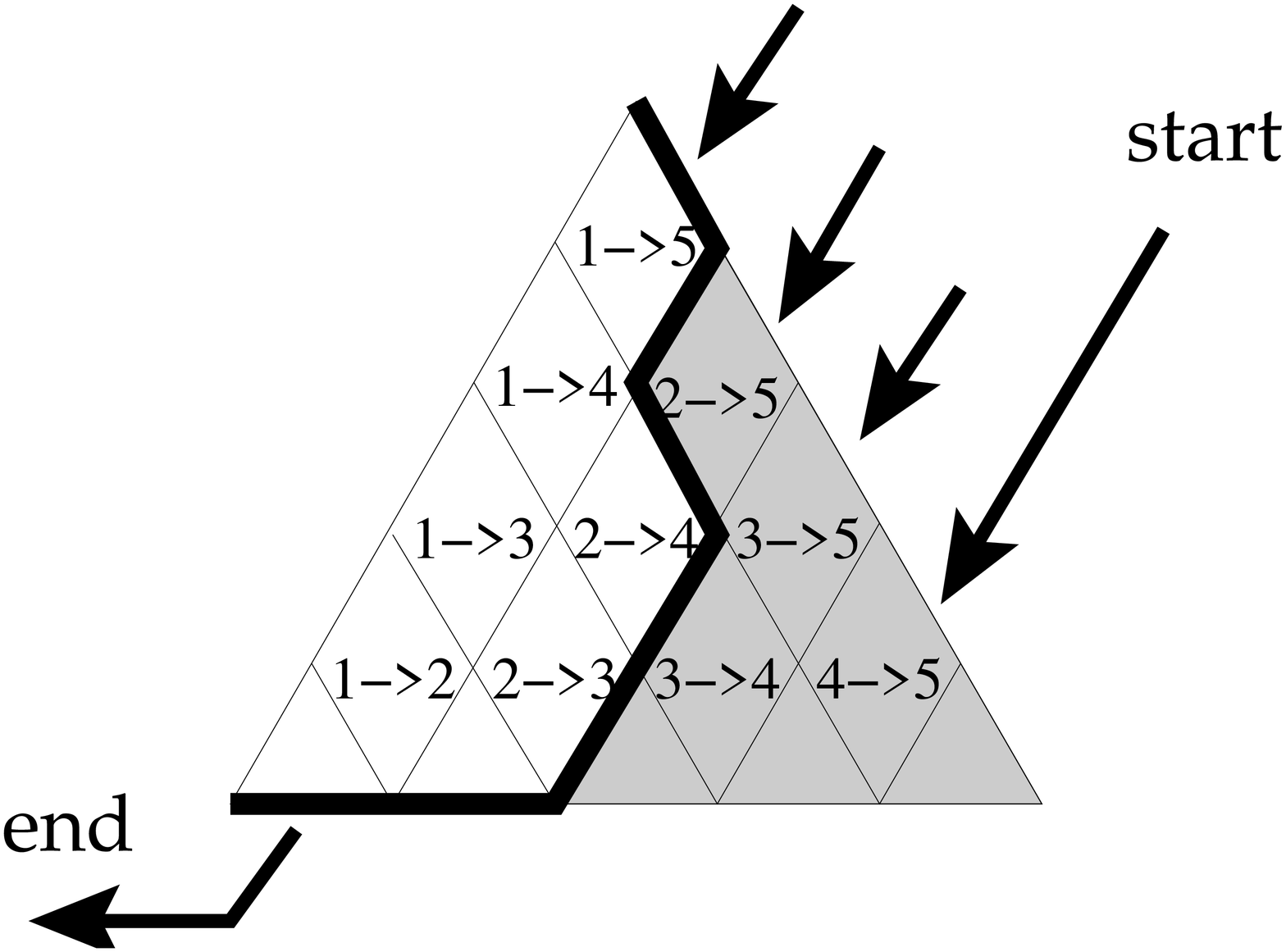,height=2in}
  \caption{Vakil's degeneration order of shifts, 
    thought of as a filling order on the rhombi in the puzzle.
    The boundary between the rhombi filled so far, and those yet to filled,
    is an unlabeled puzzle path. In the picture above, $2\to 4$ is the
    next to be filled.}
  \label{fig:degenorder}
\end{figure}

In the following theorem, we consider pairs $\gamma$, $\gamma'$ 
of puzzle paths whose symmetric difference $p$
is either a $\Delta$ piece or two triangles stacked in a vertical rhombus,
as in figure \ref{fig:added}.
In this situation, say that \defn{$p$ added to $\gamma$} (on the right of $p$) 
\defn{gives $\gamma'$} (on the left of $p$), where $p$ is the one or two 
puzzle pieces.
It is easy to see from the allowed shapes of $\gamma,\gamma'$ that there
is a unique location one might add some $p$ to $\gamma$, either filling in the 
triangle at the bottom of a NE/SW column or moving the kink SW one rhombus.

\begin{figure}[htbp]
  \centering
  \epsfig{file=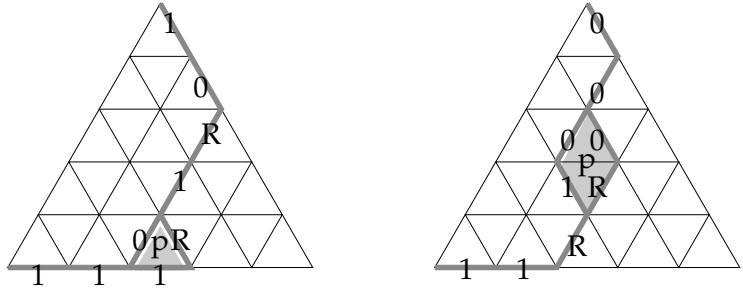,height=1.5in}
  \caption{Each picture contains the superposition of two puzzle paths
    $\gamma$ and $\gamma'$ agreeing away from a puzzle triangle or rhombus, $p$,
    which added to $\gamma$ (on the right of $p$) 
    gives $\gamma'$ (on the left of $p$).}
  \label{fig:added}
\end{figure}

\newcommand\barPi{{\overline \Pi}}

\begin{Theorem}\label{thm:puzschematic}
  To each puzzle path $\gamma$, there is a way given in \S \ref{sec:variety} 
  to associate an ``interval rank variety'' $\barPi_r \subseteq M_{k\times n}$,
  defined by rank conditions on intervals of columns,
  whose associated ``interval positroid variety''
  $\Pi_r := GL(k) \dom (\barPi_r \cap \St_{k,n})$
  in the Grassmannian will turn out to be a Vakil variety.

  If $\gamma$ is initial, $\Pi_r$ is a Richardson variety. 
  If $\gamma$ is terminal, $\Pi_r$ is an opposite Schubert variety. 

  If $\gamma$ is not terminal, take its last SE edge to be the kink.
  If the next step $\sigma$ is due West, there exists a unique triangular 
  puzzle piece to add to $\gamma$, obtaining a new puzzle path $\gamma'$.
  This $\gamma'$ has the same associated interval rank variety.

  If the next step $\sigma$ is SW, and the kink and $\sigma$ are {\em not}
  labeled $0$ and $1$ respectively, there exists a unique puzzle
  rhombus to add to $\gamma$, obtaining a new puzzle path $\gamma'$.
  This $\gamma'$ has the same associated interval rank variety.
  This situation occurs iff $\shift_{i(e)\to j(e)} \Pi_r = \Pi_r$,
  where $e$ is the horizontal edge crossing the rhombus.

  If the next step $\sigma$ is SW, and the kink and $\sigma$ {\em are}
  labeled $0$ and $1$ respectively, there exist multiple puzzle 
  rhombi to add to $\gamma$, obtaining new puzzle paths $\gamma'$.
  Each such $\gamma'$ has a different associated interval rank variety
  (and all are different from that of $\gamma$).
  This situation occurs iff $\shift_{i(e)\to j(e)} \Pi_r \neq \Pi_r$,
  where $e$ is the horizontal edge crossing the rhombus.
  Indeed $\sweep_{i(e)\to j(e)} \Pi_r$ is the Vakil variety constructed
  from adding the equivariant piece to $\gamma$,
  whereas $\shift_{i(e)\to j(e)} \Pi_r$ is the union of the 
  Vakil varieties associated to the other possible additions.
\end{Theorem}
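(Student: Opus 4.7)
The plan is to induct along Vakil's degeneration order, tracking the puzzle path $\gamma$ as it sweeps from the initial path to the final path. At each stage I would compare the interval rank variety attached to $\gamma$ with the variety obtained by applying the geometric shift or sweep of Theorem~\ref{thm:geom} to the interval rank variety of the preceding path. The two base cases should be direct unpackings of the definition of interval rank variety (to be supplied in \S\ref{sec:variety}): an initial $\gamma$ should produce exactly the rank inequalities defining $\barX_\lambda$ from its NE portion together with those defining $\barX^\nu$ from its bottom portion, whence $\Pi_r = X_\lambda \cap X^\nu$ is a Richardson variety; a terminal $\gamma$ retains only the $\barX^\nu$-type conditions and is thus an opposite Schubert variety.

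I would then dispatch the triangle (due-West) case: adding the unique triangle compatible with the adjacent edge labels of $\gamma$ creates a new horizontal edge that sits on the bottom boundary of the puzzle, whose associated rank condition is already present in the list attached to $\gamma$, giving $\barPi_{r'} = \barPi_r$. For a rhombus step, write $(a,b)$ for the labels on the kink of $\gamma$ and on the next step $\sigma$, and let $e$ be the horizontal edge of the proposed rhombus. When $(a,b) \neq (0,1)$ I would argue that the one-parameter group $\exp(t e_{i(e) j(e)})$ preserves $\Pi_r$: adding a multiple of column $j(e)$ to column $i(e)$ fixes every column-interval rank outside a short controlled list, and the $(a,b)$ labels force the few ranks in that list to already be saturated on $\Pi_r$. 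A finite case analysis over the allowed label pairs then selects the unique legal rhombus and checks that the rank data carried by $\gamma'$ agrees with that of $\gamma$.

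The central case is $(a,b) = (0,1)$. Here $\exp(t e_{i(e) j(e)}) \cdot \Pi_r$ genuinely moves $\Pi_r$, so $\sweep_{i(e)\to j(e)} \Pi_r$ strictly contains $\Pi_r$ and should coincide with the interval rank variety of the equivariant-rhombus extension of $\gamma$. For the shift, I would invoke Theorem~\ref{thm:geom}(2): $\shift_{i(e)\to j(e)} \Pi_r$ is reduced with at most two irreducible components, whose intersection (when there are two) is itself a Vakil variety. The target is to match each component with the interval positroid variety attached to one of the two non-equivariant ordinary-rhombus extensions of $\gamma$, and to match the intersection with the variety attached to an appropriate $K$-rhombus extension, with the top, middle, and bottom $K$-rhombus types recording the top, middle, and bottom layers of a $K$-stack.

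The hard part will be this last identification. The non-local conditions on $K$-labels, namely that a $K$ on the kink requires matching $R$ and $1$ labels appearing earlier along $\gamma$, must emerge as exactly the consistency constraints needed for the rank-inequality list read off from $\gamma'$ to genuinely describe the claimed component or intersection. I plan to attack this by tracking, layer by layer through a $K$-stack, which column-interval rank is being tightened and which subsequent puzzle edges must carry compatible labels to propagate that tightening consistently; the non-local $K$-label restrictions should fall out as the combinatorial shadow of this propagation. Reducing these global consistency conditions to the local puzzle-piece compatibilities is, I expect, the main technical obstacle, and will rely on the explicit rank-to-positroid dictionary of \cite{KLS} together with Vakil's component analysis of the shift.
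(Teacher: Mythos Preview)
Your outline has the right overall shape but contains a circularity and is missing the concrete mechanism the paper actually uses.

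\textbf{Circularity.} In the central case you plan to ``invoke Theorem~\ref{thm:geom}(2)'' to know that $\shift_{i\to j}\Pi_r$ is reduced with at most two components whose intersection is again a Vakil variety. But in this paper Theorem~\ref{thm:geom} is not proved independently; it is a \emph{consequence} of the detailed analysis you are trying to carry out (Lemmas~\ref{lem:not1then0}--\ref{lem:Kmeaning} and Theorem~\ref{thm:filling}). Vakil's original results give you ``generically reduced with at most two components'' for his specific varieties, but not reducedness, not the intersection statement, and not the identification with interval positroid varieties. So you cannot lean on Theorem~\ref{thm:geom} here.

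\textbf{Missing mechanism.} Your ``finite case analysis over the allowed label pairs'' and your plan to ``track which column-interval rank is being tightened'' are placeholders for the hard work. The paper supplies a concrete combinatorial gadget: from $\gamma$ one draws \emph{pink rays} that collide at \emph{pink dots}, and these dots are the $1$s of an upper-triangular partial permutation matrix $J(r)$ encoding the interval rank conditions (\S\ref{sec:variety}). The non-interesting cases (Lemma~\ref{lem:not1then0}) are then dispatched by checking, piece by piece, that the pink dots do not move. For the interesting case the paper does \emph{not} match against a pre-known component count; instead it (i) bounds $\shift_{i\to j}\Pi_\gamma$ from above by $\Pi_{\sweep\gamma}\cap B_{[i,j-1]\le r(\gamma)_{i+1,j}}$ using Lemma~\ref{lem:shiftbasic}, (ii) decomposes that intersection via Lemma~\ref{lem:intersectIRvars} and the Bruhat covering relations of Corollary~\ref{cor:coveringrelations}, which forces at most two components $\Pi_{\gamma_0},\Pi_{\gamma_1}$, and (iii) exhibits explicit $T$-fixed points (via the matchings of Lemma~\ref{lem:matchings}) to certify that both components actually occur in the shift when both puzzle extensions are legal. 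The intersection $\Pi_{\gamma_0}\cap\Pi_{\gamma_1}=\Pi_{\gamma_K}$ is Lemma~\ref{lem:Kmeaning}.

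Two smaller points: the one-parameter group $\exp(te_{ij})$ adds column $i$ to column $j$, not the reverse; and at the moment of filling, only the \emph{top} $K$-rhombus is added---the middle and bottom $K$-rhombi appear in later steps as the $\bslash K$ label propagates down the kink.
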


Readers wishing to see a detailed example may jump directly to \S \ref{sec:ex}.

\subsection{Positivity and puzzle statements in various
  cohomology theories}\label{ssec:positivity}

Let $\calP_{\lambda\mu}^\nu$ be the set of puzzles with labels
$\lambda$ on the NW side, $\mu$ on the NE side, $\nu$ on the S side,
all read left to right. 
For each of the cohomology theories $E^*$ discussed below, and each 
rhombus puzzle piece,
we associate an element $\Phi(E^*,\rho)$ of $E^*(pt)$ 
(possibly $0$), so that the formula 
$\sum_{P\in \calP_{\lambda\mu}^\nu} \prod_{\rho\in P} \Phi(E^*,\rho)$
will turn out to compute a coefficient of interest.

\subsubsection{Ordinary cohomology}
As already mentioned, the Schubert cycles $\{X_\lambda\}$ define a
$\integers$-basis of the cohomology ring of the Grassmannian. 
In this theory, the \defn{Littlewood-Richardson coefficients} 
$\{c_{\lambda\mu}^\nu \in \naturals\}$
show up in two expansions:
$$ [X_\lambda] [X_\mu] = \sum_\nu c_{\lambda\mu}^\nu [X_\nu], \qquad
[X_\mu^\nu] = \sum_{\lambda} c_{\lambda\mu}^\nu [X^\lambda]. $$
In essence, it is the latter expansion that Vakil studies, largely because 
the intersection $X_\mu \cap X^\nu$ is transverse and the intersection
$X_\lambda \cap X_\mu$ is not.

Both expansions are consequences of the alternate definition 
$$ c_{\lambda\mu}^\nu = \int_{\Grkn} [X_\lambda] [X_\mu] [X^\nu] $$
and the dual-basis relation 
$\int_{\Grkn} [X_\lambda] [X^\nu] = \delta_{\lambda\nu}$.

\begin{Theorem}\cite{KTW}\label{thm:hpuz}
  Let the factors $\Phi(H^*,\rho)$ be $0$ for equivariant and $K$-rhombi.
  Then $c_{\lambda\mu}^\nu 
  = \sum_{P\in \calP_{\lambda\mu}^\nu} \prod_{\rho\in P} \Phi(H^*,\rho)
  =$ the number of puzzles $P \in \calP_{\lambda\mu}^\nu$ using only triangles.
\end{Theorem}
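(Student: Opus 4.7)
The plan is to apply Vakil's degeneration algorithm iteratively to the Richardson variety $X_\mu^\nu$, tracking each stage via Theorem~\ref{thm:puzschematic}. The initial puzzle path, with NE side labeled $\mu$ and S side labeled $\nu$, has associated interval rank variety equal to $X_\mu^\nu$. At each of the ${n\choose 2}$ stages I would apply $\shift_{\#i}$ (never $\sweep_{\#i}$, since we are working in ordinary cohomology), and record the outcome by adding an appropriate puzzle piece. By Theorem~\ref{thm:puzschematic}, forced (non-branching) additions correspond to steps where $\shift_{\#i}$ preserves the interval positroid variety, while branching occurs precisely when $\shift_{\#i} X_{i-1}$ splits into two components.

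The key algebraic input is that $\shift$ is a flat degeneration, so $[X_{i-1}] = [\shift_{\#i} X_{i-1}]$ in $H^*(\Grkn)$. By Theorem~\ref{thm:geom}(1)--(2), $\shift_{\#i} X_{i-1}$ is reduced with at most two components, so $[\shift_{\#i} X_{i-1}] = \sum_C [C]$ with each component contributing with multiplicity one. Iterating and branching at each two-component split yields
\[
[X_\mu^\nu] \;=\; \sum_{\text{choice sequences}} [X_{{n\choose 2}}],
\]
where by Theorem~\ref{thm:geom} each terminal $X_{{n\choose 2}}$ is an opposite Schubert variety $X^\lambda$.

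By Theorem~\ref{thm:puzschematic}, these branching sequences are in bijection with completed puzzles built using only triangles (the $0/1$ puzzle rhombi that arise along the way being merely two matching triangles): a branching step offers either the $\sweep$ filling (the equivariant rhombus, which is excluded by setting $\Phi(H^*,\cdot)=0$) or one of several triangular options, while a non-branching step forces a unique triangular fill. Thus no equivariant or $K$-rhombus ever appears, consistent with $\Phi(H^*,\rho)=0$ on such pieces. By the terminal clause of Theorem~\ref{thm:puzschematic}, the NW side of the completed puzzle reads exactly the index $\lambda$ of the resulting $X^\lambda$. Combined with $[X_\mu^\nu] = \sum_\lambda c_{\lambda\mu}^\nu [X^\lambda]$ and the fact that $\{[X^\lambda]\}$ is a basis of $H^*(\Grkn)$, this yields the theorem.

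The main obstacle is establishing the bijection between Vakil branching sequences and triangle-only puzzles with matching boundary labels; this is precisely the content of Theorem~\ref{thm:puzschematic}, which I would assume. The subtler point is that each component of a reducible shift appears with multiplicity one, so branching genuinely adds classes rather than weighted sums; Vakil's original statement gave only generic reducedness, and this argument relies on the strengthening in Theorem~\ref{thm:geom}(1) to prevent extraneous multiplicities from inflating the puzzle count.
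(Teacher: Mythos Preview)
Your proof is correct and follows essentially the same route as the paper: iterate the one-step filling result (the paper packages this as Theorem~\ref{thm:filling}; you invoke its schematic announcement Theorem~\ref{thm:puzschematic} together with Theorem~\ref{thm:geom} and the identity $[X]=[\shift X]$) to expand $[X_\mu^\nu]$ as a sum over triangle-only puzzles, then identify the initial and terminal paths with $X_\mu^\nu$ and $X^\lambda$. One small correction to your closing commentary: for ordinary cohomology, Vakil's generic reducedness already forces each component to appear with multiplicity one in the fundamental class, so the strengthening in Theorem~\ref{thm:geom}(1) is not actually needed here.
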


\subsubsection{Equivariant cohomology}\label{sssec:HT}

Since the Schubert and Richardson varieties are invariant under
the action of the torus $T \leq GL(n)$ of diagonal matrices,
they define also a basis of the equivariant cohomology ring $H^*_T(\Grkn)$,
considered as a module over 
$H^*_T(pt) \iso \integers[\yy] := \integers[y_1,\ldots,y_n]$.
We do not need to introduce new notation; the ``equivariant numbers''
$c_{\lambda\mu}^\nu(\yy) \in \integers[\yy]$ specialize to the
ordinary numbers $c_{\lambda\mu}^\nu \in \integers$ by specializing
each $y_i \mapsto 0$. In particular, $c_{\lambda\mu}^\nu(\yy) \neq 0$
implies $|\lambda|+|\mu| \geq |\nu|$,
and if $|\lambda|+|\mu| = |\nu|$ then 
$c_{\lambda\mu}^\nu(\yy) = c_{\lambda\mu}^\nu$.

The Schubert and opposite Schubert varieties are related 
$$ X^\lambda = w_0 \cdot X_{\lambda\text{ reversed}} $$
by the
\defn{long element} $w_0 = (1\leftrightarrow n)(2 \leftrightarrow n-1)\cdots$
of $S_n$, and hence define the same element of the cohomology ring.
For that reason, one may wonder why we used both $[X_\lambda]$s and
$[X^\nu]$s in the equations in \S \ref{sssec:HT}, 
rather than stating everything in one basis.
This is because the Schubert and opposite Schubert varieties 
do {\em not} define the same elements in equivariant cohomology,
and only when written in the form above do the relations extend
to equivariant cohomology.

It was proven abstractly for generalized flag manifolds $G/P$ \cite{Graham}, 
and combinatorially for $\Grkn$ \cite{KT}, 
that the equivariant numbers $\{c_{\lambda\mu}^\nu(\yy)\}$ can be
written as $\naturals$-combinations of products of distinct positive roots
$y_i - y_j$, $i>j$. 
(The reference \cite{Graham} makes the weaker claim that
$c_{\lambda\mu}^\nu(\yy)$ is an $\naturals$-combination of products 
of simple roots, but the proof there gives this more precise result.)

\begin{Theorem}\cite{KT}\label{thm:htpuz}
  Let the factors $\Phi(H^*_T,\rho)$ be $0$ for the $K$-pieces.
  For an equivariant rhombus, the factor is 
  $y_{j(e)}-y_{i(e)}$.
  Then $c_{\lambda\mu}^\nu(\yy)
  = \sum_{P\in \calP_{\lambda\mu}^\nu} \prod_{\rho\in P} \Phi(H^*_T,\rho)$.

  Equivalently, one can work with only triangles and their rotations,
  plus the equivariant piece (no rotations),
  which is very nearly the viewpoint of \cite{KT}.
\end{Theorem}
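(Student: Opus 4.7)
\emph{Strategy.} I would induct along Vakil's degeneration order, using Theorem \ref{thm:puzschematic} to translate each geometric step into the addition of a puzzle piece to the current path $\gamma$. The starting variety is the Richardson variety $X_\mu^\nu$, associated with the initial puzzle path (whose NE and S sides carry the labels $\mu$ and $\nu$); after ${n\choose 2}$ steps the path becomes final and the variety becomes an opposite Schubert variety $X^{\lambda(P)}$, where $\lambda(P)$ is the NW label of the completed puzzle $P$ that records the sequence of choices. In the equivariant setting we set $\Phi(H^*_T,\rho)=0$ on $K$-rhombi, so these never contribute, and the only non-trivial piece-weights are the factors $y_{j(e)}-y_{i(e)}$ carried by equivariant rhombi.

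\emph{Key equivariant input.} The technical heart will be the degeneration identity
$$ [X]_T \;=\; [\shift_{i\to j}X]_T + (y_j-y_i)\,[\sweep_{i\to j}X]_T, $$
valid whenever $\sweep_{i\to j}X\neq X$. It should follow from considering the flat family $\mathcal{X}\subseteq\PP^1\times\Grkn$ whose fibers are $\exp(te_{ij})\cdot X$: its $0$-fiber is $X$, its $\infty$-fiber is $\shift X$, and its image under projection to $\Grkn$ is $\sweep X$. The torus $T$ acts on the parameter $\PP^1$ with weight $y_j-y_i$, and the equivariant identity $[0]_T-[\infty]_T=(y_j-y_i)[\PP^1]_T$ on the parameter line pushes forward to the displayed relation.

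\emph{Inductive step.} The four cases of Theorem \ref{thm:puzschematic} must each be handled. A triangle addition, or a rhombus whose kink/next-step labels are not $(0,1)$, leaves the interval positroid variety and its equivariant class unchanged. A rhombus in the $(0,1)$ situation is where the geometry is non-trivial: by Theorem \ref{thm:geom}(2), $\shift X=Y_1\cup Y_2$ with reduced, irreducible intersection of strictly smaller dimension (or $\shift X=Y_1$ irreducibly, in which case $Y_2$ is absent), so $[\shift X]_T=[Y_1]_T+[Y_2]_T$; combining with the degeneration identity,
$$ [X]_T \;=\; [Y_1]_T+[Y_2]_T+(y_j-y_i)\,[\sweep X]_T, $$
whose three summands correspond precisely to the three non-$K$ puzzle-rhombus additions guaranteed by Theorem \ref{thm:puzschematic}, each carrying its prescribed weight. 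Iterating ${n\choose 2}$ times expresses $[X_\mu^\nu]_T$ as $\sum_P \big(\prod_{\rho\in P}\Phi(H^*_T,\rho)\big)[X^{\lambda(P)}]_T$, the sum running over completed puzzles $P$ with NE and S labels $\mu$ and $\nu$.

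\emph{Conclusion and main obstacle.} Equivariant transversality gives $[X_\mu^\nu]_T=[X_\mu]_T\cdot[X^\nu]_T$, and since $\{[X^\lambda]_T\}$ forms a $\ZZ[\yy]$-basis of $H^*_T(\Grkn)$, one extracts $c_{\lambda\mu}^\nu(\yy)$ as the sum over puzzles $P\in\calP_{\lambda\mu}^\nu$. The main technical obstacle will be verifying the equivariant degeneration identity rigorously in the form above, and in particular confirming that the root factor $y_j-y_i$ produced by the one-parameter subgroup $\exp(te_{ij})$ really matches the weight $y_{j(e)}-y_{i(e)}$ attached by the puzzle rule to the equivariant rhombus crossing the edge $e$ --- the point being that the indices $(i(e),j(e))$ naming the puzzle edge coincide with the indices of the shift being performed at that stage of the degeneration, which is precisely the bookkeeping codified by Theorem \ref{thm:puzschematic}.
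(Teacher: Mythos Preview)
Your proposal is correct and follows essentially the same route as the paper. The paper packages your ``equivariant degeneration identity'' as Lemma~\ref{lem:KTdegen} (with a degree factor $d$ multiplying the sweep term), and your inductive step is exactly Theorem~\ref{thm:filling}; the final iteration is the short argument at the end of \S\ref{sec:ktproof}. The one place where the paper does nontrivial work you only flag as an obstacle is verifying $d=1$: Lemma~\ref{lem:KTdegen} reduces this to exhibiting a $T$-fixed point $V_\lambda\in\Pi_\gamma$ with $j\notin\lambda$ and $(i\leftrightarrow j)\cdot V_\lambda\notin\Pi_\gamma$, and the ``Second claim'' in the proof of Theorem~\ref{thm:filling} constructs such a $\lambda$ explicitly via a matching argument (Lemma~\ref{lem:matchings}).
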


\subsubsection{(Nonequivariant) $K$-theory}

Let $[\calO_\lambda], [\calO^\nu]$ denote the classes in $K(\Grkn)$ of
the structure sheaves $\calO_\lambda, \calO^\nu$ of the Schubert and
opposite Schubert varieties.  These are not dual bases:
$$ \Kint [\calO_\lambda] [\calO^\nu] = 
\begin{cases}
  1 & \text{if } X_\lambda \cap X^\nu \neq \emptyset, \qquad
  \text{i.e. $\lambda\leq\nu$ in \defn{Bruhat order}} \\
  0 & \text{otherwise.}
\end{cases}
$$
Here $\Kint : K(\Grkn) \to K(pt) \iso \integers$ denotes the
pushforward to a point in $K$-theory, giving the ``holomorphic Euler
characteristic'' of a sheaf.  

Consequently, there is another basis of $K(\Grkn)$ to consider;
the dual basis $\{ [\xi^\nu] \}$ satisfying
$\Kint [\calO_\lambda] [\xi^\nu] = \delta_{\lambda\nu}$.
(Right now ``$[\xi^\nu]$'' is just a $K$-class; in a moment we will
define an actual sheaf $\xi^\nu$.)
Using the known M\"obius function of the Bruhat order, one can show that
$$ [\calO_\lambda] = \sum_{\nu\geq\lambda} [\xi_\lambda], \qquad
[\xi_\lambda] = \sum_{\nu\geq\lambda} (-1)^{|\nu|-|\lambda|} [\calO_\lambda]. $$
It is a pleasant fact \cite[Proposition 2.1]{GrahamKumar} that this
$K$-class $[\xi^\nu]$ is actually the $K$-class of a sheaf $\xi^\nu$,
the subsheaf of $\calO^\nu$ consisting of functions vanishing on
$\partial X^\nu := \bigcup_{\lambda < \nu} X^\lambda$.

If we define the coefficients $g,e$ by
$$ [\calO_\lambda] [\calO_\mu] = \sum_\nu g_{\lambda\mu}^\nu [\calO_\nu],\qquad
[\calO_\mu] [\calO^\nu] = \sum_\lambda e_{\lambda\mu}^\nu [\calO^\lambda] $$
then
$$ g_{\lambda\mu}^\nu = \Kint [\calO_\lambda] [\calO_\mu] [\xi^\nu],\qquad
e_{\lambda\mu}^\nu = \Kint [\calO_\mu] [\calO^\nu] [\xi_\lambda]
\qquad
\text{so } 
e_{\lambda\mu}^\nu = g_{\nu\text{ reversed}, \mu}^{\lambda\text{ reversed}}.
$$
We will extend Vakil's techniques to study the $e$ coefficients, and
thereby obtain the $g$ coefficients as well.

The coefficients $g_{\lambda\mu}^\nu \in \integers$ turn out to be
nonnegative once multiplied by $(-1)^{|\nu|-|\lambda|-|\mu|}$, as was
first shown combinatorially in the Grassmannian case in \cite{Buch}, 
and then geometrically for arbitrary $G/P$ in \cite{BrionPos}.
The condition $g_{\lambda\mu}^\nu \neq 0$ 
implies that $|\lambda|+|\mu| \leq |\nu|$ -- the opposite inequality
we had for equivariant cohomology --
and if $|\lambda|+|\mu| = |\nu|$ then
$g_{\lambda\mu}^\nu = e_{\lambda\mu}^\nu = c_{\lambda\mu}^\nu$.

Now that we have another basis $\{[\xi_\lambda]\}$ of $K(\Grkn)$, 
we can consider also its structure constants
$$ [\xi_\lambda] [\xi_\mu] = \sum_\nu p_{\lambda\mu}^\nu [\xi_\nu] $$
(though it is perhaps a bit weird to do so, as $1$ is not an element
of this basis).  Once again, these are nonnegative once multiplied by
$(-1)^{|\nu|-|\lambda|-|\mu|}$ \cite[Remark 3.7]{GrahamKumar}.

So far everything in this discussion of $K$-theory holds for 
Schubert classes on arbitrary flag manifolds $G/P$. 
We now make use of a special property characterizing {\em minuscule} $G/P$:
the two bases have a further relation
$[\xi_\lambda] = [\calO_\lambda] (1 - \square)$, where $\square$ denotes the
$K$-class of the (unique) Schubert divisor. (On Grassmannians, this fact can
be found in \cite[\S 8]{Buch}, where it is used to show a $3$-fold symmetry
of the $g$ coefficients.) Then
$$ \Kint [\calO_\lambda] [\calO_\mu] [\xi^\nu]
 = \Kint [\calO_\lambda] [\calO_\mu] [\calO^\nu] (1 - \square)
 = \Kint [\calO_\mu] [\calO^\nu] [\xi_\lambda] 
$$
so $g_{\lambda\mu}^\nu = e_{\lambda\mu}^\nu$. We also obtain the relation
\begin{eqnarray*}
  p_{\lambda\mu}^\nu 
  = \Kint [\xi_\lambda] [\xi_\mu] [\calO^\nu]
  = \Kint [\calO_\lambda] (1 - \square) [\calO_\mu] (1 - \square) [\calO^\nu]
  = \Kint [\calO_\lambda] (1 - \square) [\calO_\mu] [\xi^\nu]
  = g_{\lambda\mu}^\nu - g_{\square\lambda\mu}^\nu
\end{eqnarray*}
and both of the latter terms (the second one, a structure constant for
a triple product) have the right sign for $p_{\lambda\mu}^\nu$. 
Hence, in the case $G/P$ minuscule, the positivity property of the $p$
coefficients follows from that of the $g$.

\begin{Theorem}(An analogue of \cite[Theorem 3.6]{Vakil}.)\label{thm:kpuz}
  Let the factors $\Phi(K,\rho)$ be $0$ for the equivariant piece,
  $-1$ for the top $K$-piece, and $1$ for the others.

  Then $g_{\lambda\mu}^\nu(\yy)
  = \sum_{P\in \calP_{\lambda\mu}^\nu} \prod_{\rho\in P} \Phi(K,\rho)
  = (-1)^{|\nu|-|\lambda|-|\mu|}\ \#\{$puzzles $P$ using only these pieces$\}$.
\end{Theorem}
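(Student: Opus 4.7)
The plan is to carry out Vakil's geometric degeneration in $K$-theory using Theorem \ref{thm:geom}. Because $\Grkn$ is minuscule, the identity $g_{\lambda\mu}^\nu = e_{\lambda\mu}^\nu$ derived in \S \ref{ssec:positivity} lets me instead compute $e_{\lambda\mu}^\nu$ appearing in $[\calO_\mu][\calO^\nu] = \sum_\lambda e_{\lambda\mu}^\nu [\calO^\lambda]$. Since the Richardson variety $X_\mu^\nu = X_\mu \cap X^\nu$ is a positroid variety with rational singularities \cite{KLS} and the intersection is proper, $[\calO_\mu][\calO^\nu] = [\calO_{X_\mu^\nu}]$ in $K(\Grkn)$. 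Take this Richardson as the initial $X_0$ of Theorem \ref{thm:geom}.

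At each of the $n\choose 2$ stages, flatness of the shift preserves the $K$-class: $[\calO_{X_{i-1}}] = [\calO_{\shift_{\#i} X_{i-1}}]$. By Theorem \ref{thm:geom}, the limit is reduced with at most two components $A, B$; in the reducible case $C := A \cap B$ is a reduced irreducible positroid variety (hence Cohen--Macaulay), so the Mayer--Vietoris sequence
$$ 0 \to \calO_{A \cup B} \to \calO_A \oplus \calO_B \to \calO_{A \cap B} \to 0 $$
is exact and yields
$$ [\calO_{X_{i-1}}] = [\calO_A] + [\calO_B] - [\calO_C]. $$
Recursively degenerate $A$, $B$, and $C$ independently. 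Since each terminal $X_{n\choose 2}$ is an opposite Schubert $X^\lambda$, iterating expresses $[\calO_{X_\mu^\nu}]$ as a signed sum of $[\calO^\lambda]$'s, with multiplicities recording the runs of the algorithm ending at $X^\lambda$, weighted by $(-1)^{(\text{number of $C$-branches taken})}$.

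By Theorem \ref{thm:puzschematic}, each reducible-shift step corresponds to adding one of several puzzle rhombi to the current puzzle path: the two non-$K$ non-equivariant rhombi associated to $A$ and $B$, and one $K$-rhombus associated to the intersection $C$. The equivariant rhombus, corresponding geometrically to the sweep, never arises in a pure shift degeneration, matching $\Phi(K,\text{equivariant})=0$. Consecutive reducible stages acting on the same intersection variety yield top/middle/bottom $K$-rhombi of a single $K$-aggregate: the top $K$-rhombus marks the passage from $X_{i-1}$ to $C$ and is the unique piece carrying the inclusion--exclusion sign $-1$, while middle and bottom pieces carry $+1$ because no new $C$-step is initiated during the extension of an already-existing aggregate. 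The non-local constraints (1),(2) on $K$-edges from \S \ref{ssec:puzzleintro} are precisely the consistency conditions on the interval rank data along a $K$-aggregate. Runs of the algorithm ending at $X^\lambda$ therefore biject with puzzles $P \in \calP_{\lambda\mu}^\nu$ containing no equivariant rhombi, and the signed count equals $\prod_{\rho \in P} \Phi(K,\rho) = (-1)^{\#\text{top $K$-rhombi in $P$}}$; extracting the coefficient of $[\calO^\lambda]$ gives the first equality.

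The second equality follows from a dimension count: in each reducible shift step $\dim A = \dim B = \dim X_{i-1}$ (being components of the reduced flat limit) and $\dim C = \dim A - 1$ (as $A \cap B$ is a divisor in each component), while irreducible shift steps preserve dimension; the total drop from $X_\mu^\nu$ (dimension $|\nu|-|\mu|$) to $X^\lambda$ (dimension $|\lambda|$) thus equals the number of top $K$-rhombi, giving $\#\text{top $K$-rhombi in $P$} = |\nu|-|\lambda|-|\mu|$ uniformly across all $P$. The main obstacle is combinatorial rather than geometric: verifying that the interval rank data of \S \ref{sec:variety} enforce exactly the top/middle/bottom grammar and the non-local $K$-edge constraints, so that degeneration runs and valid puzzles biject. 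The hard part is tracking how the kink migrates across successive reducible-shift stages that share a common intersection variety $C$, and translating the local interval rank changes at those stages into the correct $K$-rhombus labels.
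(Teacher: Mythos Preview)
Your approach matches the paper's: both start from the Richardson variety, use flatness of the shift to preserve $K$-classes, and invoke Mayer--Vietoris when the limit is reducible. The paper packages the single step as Theorem~\ref{thm:filling} (proved for all four theories simultaneously via Lemma~\ref{lem:KTdegen}, then specialized by $e^{\yy}\mapsto 1$ for nonequivariant $K$) and iterates. Your dimension argument for the second equality is correct and is the content of Lemma~\ref{lem:inversions}, which the paper states but does not prove.

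There is one genuine misconception. You describe middle and bottom $K$-rhombi as arising from ``consecutive reducible stages acting on the same intersection variety''. In fact they arise at \emph{trivial} stages. After a top $K$-rhombus is placed, the new kink carries the label $K$; the subsequent configurations $(\bslash K,\fslash 0)$ and $(\bslash K,\fslash 1)$ fall under Lemma~\ref{lem:not1then0}(2): the rhombus is forced, the pink dots do not move, and $\Pi_{\gamma'}=\Pi_\gamma$. No Mayer--Vietoris occurs there, which is exactly why those pieces carry weight $+1$. The $-1$ lives only on the top $K$-piece because that is the sole step at which an intersection branch is taken.

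The combinatorial verification you flag as ``the main obstacle'' is precisely what the paper does carry out, and it is not a formality. Lemma~\ref{lem:not1then0} handles the trivial stages; Lemma~\ref{lem:filling} shows that both triangle-rhombi can be added iff the top $K$-piece can, so the reducible case matches the availability of the $K$-piece; Lemma~\ref{lem:Kmeaning} proves $\Pi_{\gamma_0}\cap\Pi_{\gamma_1}=\Pi_{\gamma_K}$ and exhibits the $T$-fixed points needed to show that both components actually occur in the limit; and the body of Theorem~\ref{thm:filling} establishes the equality $\shift_{i\to j}\Pi_\gamma=\Pi_{\gamma_0}\cup\Pi_{\gamma_1}$ (not merely containment). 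Without these, your bijection between degeneration runs and puzzles --- including the non-local $K$-edge constraints --- remains asserted rather than proved.
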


\subsubsection{Equivariant $K$-theory}\label{sssec:KT}

Our reference for this subject is \cite{AGM}.

The base ring $K_T(pt)$ for $T$-equivariant $K$-theory is the
representation ring of $T$, and isomorphic to a Laurent polynomial ring. 
Since we use $y_1,\ldots,y_n$ to denote an {\em additive} basis of the
weight lattice of $T$, we will instead use $\exp(y_1),\ldots,\exp(y_n)$ 
to denote the corresponding elements of 
$K_T(pt) \iso \integers[e^\yy]:= \integers[\exp(\pm y_1),\ldots,\exp(\pm y_n)]$.
The sheaves $\calO_\lambda, \xi_\lambda, \calO^\nu, \xi^\nu$ are 
$T$-equivariant, and so define classes in $K_T(\Grkn)$,
for which we use the same notation
$[\calO_\lambda], [\xi_\lambda], [\calO^\nu], [\xi^\nu]$ as before.

The structure constants 
$e_{\lambda\mu}^\nu,p_{\lambda\mu}^\nu,g_{\lambda\mu}^\nu$ generalize to 
$e_{\lambda\mu}^\nu(e^\yy),p_{\lambda\mu}^\nu(e^\yy),g_{\lambda\mu}^\nu(e^\yy)$,
and again the latter specialize to the former under $\yy\mapsto 0$,
$e^\yy \mapsto 1$. Each family has the same positivity statement:
$$ e_{\lambda\mu}^\nu(e^\yy),p_{\lambda\mu}^\nu(e^\yy),g_{\lambda\mu}^\nu(e^\yy)
\quad \in \quad
(-1)^{|\lambda|+|\mu|-|\nu|} \
\naturals\big[ \{ \exp(y_i - y_j) - 1 \} \big]_{i>j} $$
but these appear to require three different proofs
\cite[Corollaries 5.1-5.3]{AGM}. I do not know (even conjecturally)
the proper analogue of the ``products of {\em distinct} positive roots''
property mentioned at the end of \S \ref{sssec:HT}.

Vakil's geometric techniques generalize most easily to studying
the $e_{\lambda\mu}^\nu$ structure constants, and we confine ourselves
to that problem in this paper.

\junk{
We now give the resulting rule for the structure constants 
$e_{\lambda\mu}^\nu(e^\yy) \in K_T(pt)$ from 
$$ [\calO_\mu] [\calO^\nu] = \sum_\lambda e_{\lambda\mu}^\nu [\calO^\lambda], $$
extending rules from \cite{KT,Vakil,Buch}
in $T$-equivariant cohomology and ordinary $K$-theory.
Our rule is manifestly positive in the sense of \cite{AGM}, detailed
in \S \ref{sssec:KT}.
}
\begin{Theorem}\label{thm:ktpuz}
  Let $\beta(\rho) = y_{i(e)} - y_{j(e)}$ for $\rho$ a vertical rhombus,
  and $e$ the edge bisecting it.
  Let the factors $\Phi(K_T,\rho)$ be as follows:
  $$ \Phi(K_T,\rho) = 
  \begin{cases}
    1 - e^{\beta(\rho)} & \text{if $\rho$ is an equivariant piece} \\
    e^{\beta(\rho)}
    &\text{if $\rho$ is another rhombus with $1,0$ on its right side}\\
    -1 & \text{if $\rho$ is the top $K$-piece}\\
    1 & \text{otherwise.}
  \end{cases}
  $$

  Then $e_{\lambda\mu}^\nu(e^\yy)
  = \sum_{P\in \calP_{\lambda\mu}^\nu} \prod_{\rho\in P} \Phi(K_T,\rho).$
\end{Theorem}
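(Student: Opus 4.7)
The proof adapts Vakil's argument for the $K$-theoretic rule (Theorem~\ref{thm:kpuz}) to equivariant $K$-theory, tracking the $T$-equivariant structure on every sheaf encountered. Begin with
$$[\calO_\mu]\,[\calO^\nu] = [\calO_{X_\mu \cap X^\nu}] \in K_T(\Grkn),$$
valid because the Richardson intersection is proper and Cohen-Macaulay. Apply Vakil's $\binom{n}{2}$-step degeneration in order, maintaining the inductive invariant that, after step $i$, the $K_T$-class equals
$$\sum_{P_i}\Bigl(\prod_{\rho\in P_i}\Phi(K_T,\rho)\Bigr)\,[\calO_{\Pi_r(\gamma(P_i))}],$$
where $P_i$ ranges over partial puzzles filling the first $i$ rhombi in Vakil's order, $\gamma(P_i)$ is the puzzle path bounding the filled region, and $\Pi_r$ is the interval positroid variety of Theorem~\ref{thm:puzschematic}. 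By Theorem~\ref{thm:geom}, after the final step each $\Pi_r$ is an opposite Schubert variety $X^\lambda$, and the identity reads off as the claimed formula for $e_{\lambda\mu}^\nu(e^\yy)$.

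\textbf{Inductive step.} At step $i+1$ I need to verify the local recursion $[\calO_{\Pi_r(\gamma)}] = \sum_{\gamma'}\Phi(K_T,\rho)\,[\calO_{\Pi_r(\gamma')}]$, where $\gamma'$ runs over the puzzle paths obtained from $\gamma$ by a single piece addition. Three equivariant $K$-theory identities drive this recursion. First, flat degeneration of the $T$-equivariant family gives $[\calO_{\Pi_r}] = [\calO_{\shift_{\#(i+1)}\Pi_r}]$. Second, when $\shift_{\#(i+1)}\Pi_r$ splits into two components $Y_1,Y_2$ (with reduced, irreducible, Cohen-Macaulay intersection $Y_1 \cap Y_2$ by Theorem~\ref{thm:geom}(2,3)), Mayer--Vietoris yields
$$[\calO_{\shift\Pi_r}] = [\calO_{Y_1}] + [\calO_{Y_2}] - [\calO_{Y_1\cap Y_2}].$$
Third, a sweep identity relates $[\calO_{\shift\Pi_r}]$ and $[\calO_{\sweep\Pi_r}]$ via a Koszul-type argument using the $T$-character $\beta = y_{i(e)} - y_{j(e)}$ of the one-parameter subgroup $\exp(te_{i(e)\,j(e)})$, taking the schematic form
$$[\calO_{\Pi_r}] = (1-e^{\beta})\,[\calO_{\sweep\Pi_r}].$$
By Theorem~\ref{thm:puzschematic} the puzzle extensions $\gamma'$ correspond bijectively to the terms on the right-hand sides: the non-equivariant rhombi to $[\calO_{Y_1}],[\calO_{Y_2}]$ with factor $+1$, the top $K$-piece to $[\calO_{Y_1\cap Y_2}]$ with factor $-1$, the equivariant piece to $[\calO_{\sweep\Pi_r}]$ with factor $1-e^{\beta}$, and unique trivial extensions to $[\calO_{\Pi_r}]$ itself with factor $1$, or $e^{\beta}$ in the special $1,0$-labeled case (where the equivariant twist is nontrivial even though the underlying variety is preserved).

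\textbf{Main obstacle.} The hardest part is the bookkeeping of $K$-rhombus aggregates. Once Mayer--Vietoris introduces $Y_1\cap Y_2$, this intersection is itself a Vakil variety (Theorem~\ref{thm:geom}(3)) that must be propagated through all subsequent degeneration steps, contributing $-1$ only at the initial top $K$-piece and $+1$ at each middle/bottom $K$-piece as it continues trivially. I must verify that the non-local conditions on $K$-edges in \S\ref{ssec:puzzleintro} precisely encode the combinatorial range of positions during which the intersection persists as a reduced subscheme without spawning new intersections of its own. A subsidiary but nontrivial point is pinning down the exact equivariant weight $\beta = y_{i(e)} - y_{j(e)}$ in the sweep identity, together with the $e^{\beta}$ correction on $1,0$-labeled rhombi; both arise from a careful tracking of the $T$-character of $\exp(te_{i(e)\,j(e)})$ on the Stiefel-quotient description of $\Grkn$. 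With Theorems~\ref{thm:geom} and~\ref{thm:puzschematic} in hand, however, everything reduces to local analysis, and the structure of the argument parallels Vakil's for Theorem~\ref{thm:kpuz}.
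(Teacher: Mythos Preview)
Your overall architecture is right: iterate a local identity along Vakil's filling order and read off the final formula. But the three ``driving identities'' you list do not hold as stated, and this is a genuine gap, not just sloppiness.

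In $K_T$ (unlike $K$), flat degeneration does \emph{not} give $[\calO_{\Pi_r}] = [\calO_{\shift\Pi_r}]$: the family $F\subseteq\PP^1\times\Grkn$ is $T$-equivariant only after letting $T$ act on $\PP^1$ with weight $y_i-y_j$, so the fibers over $0$ and $\infty$ have different equivariant classes. Likewise your ``sweep identity'' $[\calO_{\Pi_r}] = (1-e^{\beta})[\calO_{\sweep\Pi_r}]$ is false (and incompatible with your first identity). The correct statement, which the paper proves as Lemma~\ref{lem:KTdegen}, is the single combined formula
\[
  [\Pi_\gamma] \;=\; (1-e^{\beta})\,[\sweep_{i\to j}\Pi_\gamma] \;+\; e^{\beta}\,[\shift_{i\to j}\Pi_\gamma],
\]
obtained by pulling back the $K_T(\PP^1)$ relation $1-e^{\beta} = [\{0\}] - e^{\beta}[\{\infty\}]$ to $F$, capping with $[F]$, and pushing forward to $\Grkn$; one must also verify that $F\to\sweep\Pi_\gamma$ is birational and that $\sweep\Pi_\gamma$ has rational singularities so that $(\pi_2)_*[F]=[\sweep\Pi_\gamma]$. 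Only \emph{after} this does Mayer--Vietoris enter, applied to $\shift\Pi_\gamma$ when it has two components.

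This corrected identity is exactly what produces the factor $e^{\beta}$ on the non-equivariant rhombi with $1,0$ on the right: those rhombi index the pieces $[\Pi_{\gamma_0}],[\Pi_{\gamma_1}],-[\Pi_{\gamma_K}]$ of $[\shift\Pi_\gamma]$, all multiplied by $e^{\beta}$. Your explanation that $e^{\beta}$ arises in a case ``where the equivariant twist is nontrivial even though the underlying variety is preserved'' is not what is happening. Once you replace your identities (1) and (3) by the single lemma above, the rest of your outline matches the paper's proof.
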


To be sure that these formul\ae\ have the desired positivity properties,
we give a lemma, which can be proved (though we won't do so)
by the techniques from \cite[\S 4]{KT}.

\begin{Lemma}\label{lem:inversions}
  Let $\Delta$ be a puzzle, with $\lambda$,$\mu$,$\nu$ the strings of
  labels on the NW, NE, S sides respectively, all read left-to-right.
  Then 
  $$ |\nu| + \#\{ \text{equivariant rhombi in }P\} 
  = |\lambda|+|\mu| + \#\{\text{top $K$-rhombi in }P\}. $$
\end{Lemma}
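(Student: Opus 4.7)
The plan is to prove Lemma~\ref{lem:inversions} by a local conservation argument extending the techniques of \cite[\S 4]{KT}, which establish the analogous identity in the absence of $K$-pieces. The idea is to define an inversion statistic on strings over the extended alphabet $\{0,1,R,K\}$ that agrees with the usual $|\cdot|$ on $\{0,1\}$-strings, and then track how this statistic changes as one sweeps down through the puzzle row by row.

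Concretely, I would fix an auxiliary total order on $\{0,1,R,K\}$ (such as $0<K<R<1$) and, for each row $k$ of horizontal edges in the puzzle, let $W_k$ denote the string of edge labels read left to right. Since $W_0=\nu$, the quantity $|\nu|$ is the initial term of a telescoping sum $\sum_{k}(|W_k|-|W_{k+1}|)$, to which one must add bookkeeping contributions for labels entering or leaving the sweep along the NW and NE sides of the puzzle. With the correct bookkeeping these boundary contributions accumulate to $-|\lambda|$ on the NW and $-|\mu|$ on the NE, so the full telescoping sum evaluates to $|\nu|-|\lambda|-|\mu|$.

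The remaining step is to compute, piece by piece, the local contribution of each puzzle piece to the telescoping sum. Triangles should contribute $0$; the equivariant rhombus should contribute $-1$ (its $0$/$1$ labels encode a ``missed crossing'' of two paths that approach, touch, and separate without swapping); the top $K$-rhombus should contribute $+1$ (an ``extra crossing''); middle and bottom $K$-rhombi should contribute $0$. Summing over all pieces then yields $|\nu|-|\lambda|-|\mu|=\#(\text{top $K$-rhombi})-\#(\text{equivariant rhombi})$, which rearranges to the stated identity. The main obstacle is selecting the auxiliary order and boundary bookkeeping so that each piece's local contribution can be computed uniformly, and verifying that the non-local $K$-edge constraints from \S\ref{ssec:puzzleintro} (pairing $K$ with intervening $R$ and $1$ labels along a puzzle path) do not produce unbalanced terms; as a sanity check, setting the $K$-piece counts to zero should recover precisely the identity proven in \cite[\S 4]{KT}.
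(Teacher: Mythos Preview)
The paper does not actually prove this lemma: immediately before stating it the author writes that it ``can be proved (though we won't do so) by the techniques from \cite[\S 4]{KT}.'' Your proposal is precisely to carry out such a proof, extending the local/telescoping inversion-count argument of \cite[\S 4]{KT} to accommodate the $K$-pieces, so your approach coincides with what the paper recommends. (Two minor points of execution: you say you sweep ``down'' yet set $W_0=\nu$ at the bottom, which is inconsistent; and the per-piece contributions $0,-1,+1,0,0$ are asserted rather than checked---once you fix an explicit ordering on $\{0,1,R,K\}$ these become routine case checks, and the non-local $K$-edge constraints play no role since the telescoping argument is purely local.)
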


\subsection{Interval rank varieties}\label{ssec:interval}

Given a matrix $M\in M_{k,n}$, associate an upper triangular 
\defn{interval rank matrix} 
$r(M)$ by 
$$ r(M)_{ij} :=
\begin{cases}
  rank(M_{[i,j]}) & \text{if $i\leq j$} \\
  0 & \text{if $i>j$} 
\end{cases}
$$

\begin{Theorem}\label{thm:rankmatrices}
  \begin{itemize}
  \item For any $M \in M_{k,n}$, there exists a unique upper triangular 
    $n\times n$ partial permutation%
    \footnote{meaning, at most one $1$ in each row and column} 
    matrix $J(r)$ such that
    $$ r(M)_{ij} 
        = |[i,j]| - \#\{\text{$1$s in $J(r)$ southwest of $(i,j)$}\}. $$
  \item Every upper triangular $n\times n$ 
    partial permutation matrix with at least $n-k$ $1$s arises this way.
  \item If we fix an interval rank matrix $r$ that actually arises for some $M$,
    the \defn{interval rank variety}
    $$ \barPi_r := \{ N : r(N) \leq r \text{ entrywise} \} $$
    is isomorphic to a certain ``Kazhdan-Lusztig variety'' in 
    a flag manifold. Hence it is reduced, irreducible, normal,
    Cohen-Macaulay, and has rational singularities, and there is a good
    formula for its $T$-equivariant Hilbert series.
  \end{itemize}
\end{Theorem}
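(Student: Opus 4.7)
\emph{Part (1).} I would first translate the statement into the nullity function $s_{ij} := (j-i+1) - r_{ij}$, extended by zero off the upper triangle. The equation in the theorem says exactly that $s_{ij}$ counts $1$s of $J$ in the triangle $\{(a,b):i\le a\le b\le j\}$, so discrete inclusion--exclusion on these triangles forces
\[ J_{ab} \;=\; s_{ab} - s_{a+1,b} - s_{a,b-1} + s_{a+1,b-1}, \]
which proves uniqueness. To verify that this $J$ is an upper-triangular $\{0,1\}$-valued partial permutation, I would check: (i) $J_{ab}\ge 0$ by submodularity of rank applied to the column spans of $M_{[a,b-1]}$ and $M_{[a+1,b]}$ (whose intersection contains $M_{[a+1,b-1]}$ and whose sum is $M_{[a,b]}$); (ii) $J_{ab}\le 1$, since each one-column extension changes nullity by $0$ or $1$; (iii) row and column sums of $J$ are at most $1$, by telescoping, for instance $\sum_{b\ge a} J_{ab} = s_{a,n} - s_{a+1,n} \in \{0,1\}$. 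The total number of $1$s is $s_{1,n} = n - \mathrm{rank}(M) \ge n-k$.

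\emph{Part (2).} For the converse I would construct $M$ directly. Call a column $b$ of $J$ \emph{pivotal} if its column contains no $1$, and enumerate the pivotal columns as $b_1<\cdots<b_p$ with $p = n - \#\{1\text{s in }J\}\le k$. Set column $b_\ell$ of $M$ to the $\ell$th standard basis vector (and to zero if $\ell>k$). For each non-pivotal column $b$ with $J_{ab}=1$, set column $b$ of $M$ to be a sufficiently generic combination of the pivotal columns lying in $[a,b]$ together with the closest pivotal column before position $a$, if any. A column-by-column induction then verifies that running the part (1) recipe on this $M$ returns the given $J$.

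\emph{Part (3).} This is the geometric core. The plan is to complete $J$ to an honest permutation $w(J) \in S_N$ by padding with extra rows and columns, and to embed $M_{k,n}$ into an open Bruhat cell of an appropriate partial flag manifold so that each interval rank condition $\mathrm{rank}(M_{[i,j]}) \le r_{ij}$ translates into a northwest rank condition on the padded matrix --- exactly of the form cutting out the matrix Schubert variety $\bar X_{w(J)}$ in Fulton's sense. The listed properties (reduced, irreducible, normal, Cohen--Macaulay, rational singularities, and the formula for $T$-equivariant Hilbert series) then follow either from Fulton's theorem on matrix Schubert varieties or from the Brion--Kumar--Lakshmibai theory of Kazhdan--Lusztig varieties invoked in \cite{KLS}. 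The main obstacle lies precisely here: choosing the padding and the completion $w(J)$ so that the rank conditions match on the nose, and so that the diagonal torus $T \le GL(n)$ acting on columns of $M$ corresponds to the standard torus on the flag manifold, in order to inherit the $T$-equivariant Hilbert series.
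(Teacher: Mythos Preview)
Your approach tracks the paper's closely. For Part (1) your nullity--inclusion/exclusion formula $J_{ab}=s_{ab}-s_{a+1,b}-s_{a,b-1}+s_{a+1,b-1}$ is exactly equivalent to the paper's criterion $J_{ij}=1 \iff r_{ij}=r_{i,j-1}=r_{i+1,j}=r_{i+1,j-1}+1$, and your submodularity argument is a clean way to verify it is a partial permutation (the paper just cites \cite{KLS}). For Part (3) you have the right plan, and it is precisely what the paper does: set $N=k+n$, take $\rho = (n{+}1)(n{+}2)\cdots(n{+}k)\,1\,2\cdots n$, identify $X^\rho_\circ$ with the affine cell $C=\left\{\left[\begin{smallmatrix}N & I_k\\ I_n & 0\end{smallmatrix}\right]\right\}$, and complete $J$ to $\pi\in S_{k+n}$ by placing $J$ in the lower-left $n\times n$ block and filling the remaining rows/columns with the unique fewest-inversions extension. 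Then Fulton's northwest rank conditions on $\bar X_\pi$, restricted to $C$, become exactly the interval rank conditions $\mathrm{rank}(N_{[i,j]})\le r_{ij}$; this is the computation you correctly flagged as the main obstacle.

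There is a concrete error in your Part (2) construction. If $J_{ab}=1$ then $r_{ab}=r_{a,b-1}$, so column $b$ must lie in the span of columns $[a,b-1]$. Including ``the closest pivotal column before position $a$'' in your generic combination destroys this. For instance with $n=3$, $k=1$, and $J$ having $1$s at $(2,2)$ and $(1,3)$, the only pivotal column is column $1$; your recipe sets column $2$ to a nonzero multiple of $e_1$, giving $r(M)_{22}=1$, whereas the target is $r_{22}=0$. The fix is to take column $b$ as a generic combination of the pivotal columns lying in $[a,b-1]$ only (the empty combination $0$ when there are none); one then checks inductively that the span of columns $[i,j]$ equals the span of the pivotal columns in $[i,j]$, from which the correct rank function follows.
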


The quotient
$$ \Pi_r := GL(k) \dom (\barPi_r \cap \St_{k,n}) $$
is a special case of a ``positroid'' subvariety of the Grassmannian.
Positroid varieties are defined by rank conditions
on all {\em cyclic} intervals of columns, 
i.e. including $i,i+1,\ldots,n-1,n,1,2,\ldots,j$.
We studied these in \cite{KLS}, where we showed they are reduced, irreducible,
normal, and Cohen-Macaulay with rational singularities.
Unfortunately, we don't know this upstairs in $M_{k,n}$
(just in $\St_{k,n}$), when cyclic conditions are used,
so we make use of the connection to Kazhdan-Lusztig 
varieties in the flag manifold to give an independent proof.

We will call these $\{\Pi_r\}$ \defn{interval positroid varieties}.
It will turn out that each Vakil variety is of the form
$\Pi_r$ for some $r$.

In \cite{HostenSullivant} they determine the components of the
subscheme of $M_{k\times n}$ defined by asking that each connected
$k\times k$ minor vanish.  Via the connection to positroid varieties,
one can show that each of these components is an interval rank variety.

\subsection*{Acknowledgments}
This paper would not have been possible without the early participation
of Ravi Vakil. It was in an attempt to understand his work that I began
to look into positroid varieties, and have learned so much about them 
from discussions with Thomas Lam, David Speyer, and Michelle Snider.  

\section{Interval rank varieties}\label{sec:intvpositroid}

Let $B_-$, respectively $B_+$, denote the groups of lower, respectively
upper, triangular matrices in $GL(N)$. 
A \defn{Schubert variety} in the flag manifold $B_- \dom GL_N$ is
the closure
$$ X_\pi := \overline{B_- \pi B_+} / B_+ $$
where $\pi$ is a permutation matrix. 
\junk{They are related to the 
  Grassmannian Schubert varieties in the introduction by identifying
  the Grassmannian with $P_- \dom GL(N)$, where $P_-$ is the 
  block-lower-triangular matrices with block sizes }
These are well-known to be Cohen-Macaulay with rational singularities
(see e.g. \cite{BrionFlags}). An \defn{opposite Schubert cell} is an
orbit of $B_+$,
$$ X^\rho_\circ :=  B_+ \rho B_+ / B_+, $$
and a \defn{Kazhdan-Lusztig variety} (terminology from \cite{WY})
is the intersection
$$ X^\rho_{\pi\circ} := X_\pi \ \cap\ X^\rho_\circ. $$
It is of dimension $\ell(\rho)-\ell(\pi)$, where
$$ \ell(\pi) := \#\{(i,j) : i<j, \pi(i)>\pi(j) \}. $$
This variety $X^\rho_{\pi\circ}$
is used to study the singularities of $X_\pi$ near the point 
$\rho B_+ / B_+$. These affine varieties have the same good properties
as the Schubert varieties, and have nice degenerations to 
unions of coordinate spaces \cite[\S 7.3]{frobAffine}.

It will be convenient to study $X_\pi$ via its
\defn{matrix Schubert variety} \cite{Fulton92,GrobnerGeom}:
$$ \barX_\pi := \overline{B_- \pi B_+} \subseteq M_{N\times N} $$
Fulton \cite{Fulton92} determined the equations defining $\barX_\pi$:
$$ \barX_\pi := \left\{ M \in M_{N\times N} 
        \ :\ rank(M_{\leq i,\leq j}) \leq rank(\pi_{\leq i,\leq j}),
      i,j\leq n \right\} $$
where $N_{\leq i,\leq j}$ denotes the upper left $i\times j$ submatrix.
It is enough to take $(i,j)$ in Fulton's \defn{essential set}, the
Southeast corners of $\pi$'s Rothe diagram.
Fulton also proves (after \cite[lemma 6.1]{Fulton92})
that $\barX_\pi$ is itself a Kazhdan-Lusztig variety
for $GL(2N)$, without using that language. 

\begin{proof}[Proof of theorem \ref{thm:rankmatrices}]
  An interval rank matrix is easily seen to satisfy the following properties:
  \begin{enumerate}
  \item The diagonal entries are $0$ or $1$.
  \item Each entry is either $0$ or $1$ more than the entries 
    West and South of it.
  \item If $r(M)_{ij} = r(M)_{i-1,j} = r(M)_{i,j+1}$, then 
    $r(M)_{ij} = r(M)_{i-1,j+1}$.
  \end{enumerate}
  Let $J = J(r)$ be the upper triangular matrix with $1$ at $(i,j)$
  iff $r(M)_{ij} = r(M)_{i,j-1} = r(M)_{i+1,j} = r(M)_{i+1,j-1}+1$,
  and $0$ otherwise. Then $J$ is a partial permutation matrix, 
  and $r(M)_{ij} = |[i,j]| - \#\{\text{$1$s in $\pi$ southwest of $(i,j)$}\}.$
  We refer to \cite[corollaries 3.10-3.12]{KLS} for the proof of a 
  similar but but more general statement.

  We will show that $\barPi_r$ is isomorphic to a Kazhdan-Lusztig variety
  $X^\rho_{\pi\circ}$ in $GL(k+n)/B$. 
  The upper index $\rho$ will not depend on $r$: in one-line notation 
  it is $n\!+\!1\ n\!+\!2\ \ldots n\!+\!k\ 1\ 2\ \ldots\ n$, of length $kn$.
  There is a handy subset $C \subseteq GL(k+n)$ that projects
  isomorphically to $X^\rho_\circ = B_+ \rho B_+ / B_+$:
  $$ C := \left\{ \left[
      \begin{array}{cc}
        N & Id_k \\
        Id_n & 0 
      \end{array}
    \right] : N \in M_{k\times n} \right\}  \qquad
  \text{where the $Id_k,Id_n$ are identity matrices.} 
  $$
  We still need to define $\pi$ from $J$, which we recall has
  at least $n-k$ $1$s, thus at most $k$ empty rows and $k$ empty columns.
  Loosely speaking,
  put $J$ in the lower left of $\pi$ and extend it to a permutation 
  matrix in the unique way with fewest inversions:
  $$ \pi = \left[
      \begin{array}{cc}
        A_1 & {0\atop Id_{s}} {0\atop 0} \\
        J   & A_2 
      \end{array}    \right], \qquad s = rank(J)-(n-k).
  $$
  In more detail, $A_1$ is the $k\times n$
  partial permutation matrix whose $j$th row ($j\leq k-s$) 
  has a $1$ in the $j$th empty column of $J$, and $A_2$ is the $n\times k$ 
  partial permutation matrix whose $j$th column has a $1$ in the $j$th
  empty row of $J$. It is easy to see that $\pi$'s Rothe diagram lies
  in the first $n-k$ columns, has no essential boxes above
  the $k$th row, and has no boxes in the lower triangle of the $J$ square. 
  In particular, Fulton's description of $\barX_\pi$ implies
  \begin{eqnarray*}
     M \in \barX_\pi \Longleftrightarrow
     \forall 0 \leq i < j\leq n,
     rank(M_{\leq k+i,\leq j}) 
     &\leq&  \#\{j'\leq j : \text{column $j'$ of $J$ is zero} \}
     + rank(J_{\leq i,\leq j}) \\
     &=& j - rank(J_{> i,\leq j}) \\
     &=& i + \big| [i+1,j] \big| - rank(J_{\geq i+1,\leq j}) \\
     &=& i + r_{i+1,j}.
  \end{eqnarray*}
  Rather than computing $X^\rho_\circ \cap X_\pi$ down in $GL(k+n)/B$,
  we will compute up in $GL(k+n)$ and project. So we intersect $C$ 
  (which maps isomorphically to its projection in $GL(n+k)/B$) and $\barX_\pi$ 
  (which inside $GL(k+n)$, is a union of fibers of the projection):
  \begin{eqnarray*}
  C \cap \barX_\pi 
  &=&
  \left\{ 
    M = \left[
      \begin{array}{cc}  N \in M_{k\times n} & Id_k \\  Id_n & 0  \end{array}
    \right] : 
    \forall 0 \leq i < j\leq n,
    rank(M_{\leq k+i,\leq j}) \leq i + r_{i+1,j}
  \right\} \\
  &=&
  \left\{ 
    M = \left[
      \begin{array}{cc}  N \in M_{k\times n} & Id_k \\  Id_n & 0  \end{array}
    \right] : 
    \forall 0 \leq i < j\leq n,
    rank(N_{[i+1,j]}) \leq r_{i+1,j}
  \right\} \\
  &\iso&
  \left\{ 
    N \in M_{k\times n} : 
    \forall 1 \leq i' \leq j\leq n,
    rank(N_{[i',j]}) \leq r_{i',j}
  \right\} \qquad\qquad (i'=i+1) \\
  &=& \barPi_r.
  \end{eqnarray*}
  Since $C$ is projecting isomorphically to $X^\rho_\circ$, 
  this intersection is projecting isomorphically to $X^\rho_{\pi\circ}$.
\end{proof}

Our running example will be the following $r$ on the left,
giving the $\pi$ on the right, 
$$ r = \left[
    \begin{array}{ccccc}
      0 & 0 & 0 & 0 & 1 \\
        & 0 & 0 & 0 & 0\\
        &   & 1 & 0 & 0\\
        &   &   & 0 & 0\\
        &   &   &   & 0\\
    \end{array}    \right], \qquad
  \pi = \left[
    \begin{array}{cccccccc}
      1 & 0 & 0 & 0 & 0 & & & \\
      0 & 1 & 0 & 0 & 0 & & & \\
      0 & 0 & 0 & 1 & 0 & & &
      \\
      0 & 0 & 0 & 0 & 1 & 0 & 0 & 0 \\
        & 0 & 0 & 0 & 0 & 1 & 0 & 0 \\
        &   & 1 & 0 & 0 & 0 & 0 & 0 \\
        &   &   & 0 & 0 & 0 & 1 & 0 \\
        &   &   &   & 0 & 0 & 0 & 1
    \end{array}    \right].
$$

\begin{Corollary}[of the proof]
  \label{cor:coveringrelations}
  Partially order the set of interval rank matrices by 
  $r \leq r'$ if $\barPi_r \supseteq \barPi_{r'}$ (the reversal is
  to match Bruhat order). Then $r \leq r'$ is a covering relation
  iff one of the following possibilities holds:
  \begin{enumerate}
  \item $J(r)$ and $J(r')$ agree, except on a rectangle in which 
    $J(r)$ has $1$s only in the NW and SE corners, whereas
    $J(r')$ has $1$s only in the SW and NE corners.
  \item $J(r)$ and $J(r')$ agree, except that a $1$ in $J(r)$ has
    moved one column to the left (into a column that was previously zero),
    or one row down (into a row that was previously zero).
  \end{enumerate}
\end{Corollary}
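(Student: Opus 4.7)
The plan is to use the isomorphism $\barPi_r \iso X^\rho_{\pi(r)\circ}$ from the preceding proof. Since Schubert containment is Bruhat order and Kazhdan--Lusztig varieties inherit this, the assignment $r \mapsto \pi(r)$ (with $\pi(r)$ the Bruhat-minimum extension of $J(r)$) is a poset embedding of interval rank matrices into $S_{k+n}$. Thus $r \lessdot r'$ is equivalent to $\pi(r) < \pi(r')$ being minimal in the image of this embedding. I would first verify this forces $\pi(r) \lessdot \pi(r')$ to be a length-one Bruhat cover: any strictly-Bruhat-intermediate $\pi'' \in S_{k+n}$ either has a different lower-left $n \times n$ block than $\pi(r),\pi(r')$ (in which case its own minimum lift $\pi(r'')$ produces an interval rank matrix $r''$ with $r < r'' < r'$, contradicting covering), or else shares its $J$-block with one of the endpoints (so the minimum lift of that block recovers that endpoint and $\pi''$ is redundant).

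Having reduced to a Bruhat cover, I would case-analyze the two swapped 1s (at positions $(a,c),(b,d)$ with $a<b,\ c<d$ and no other 1 of $\pi(r)$ in the open rectangle) by their location in the block decomposition $\pi(r)=\left[\begin{array}{cc} A_1 & * \\ J & A_2 \end{array}\right]$. \textbf{Case A} (both 1s in $J$): the swap moves $J$-1s between NW/SE and SW/NE corners of the rectangle, and Bruhat's ``no interior 1'' condition translates to ``no interior 1 of $J$'' because the $A_1, A_2$ extensions occupy rows and columns disjoint from those touched by the rectangle---yielding case (1). \textbf{Cases B and C} (one 1 in $J$, one in $A_1$ or $A_2$ respectively): using the explicit rule that $A_1$'s rows fill the empty $J$-columns in increasing order (and symmetrically for $A_2$), the cross-block swap shifts one $J$-1 one column left (resp.\ one row down) into a previously-empty column (row), with the extension re-sorting by a single adjacent transposition---yielding the two sub-cases of case (2). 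Finally, Bruhat swaps entirely outside the $J$-block leave $J$ unchanged and so produce no interval rank cover.

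The main obstacle is handling Cases B and C: one must verify that the combination of the cross-block Bruhat swap with the forced re-sorting of $A_1$ (resp.\ $A_2$) into its new minimum-lift configuration composes to a single length-one Bruhat transposition, and conversely that every case-(2) move of the corollary lifts to such a Bruhat cover. This is a direct bookkeeping exercise using the row-monotone placement rule for $A_1$ and $A_2$: when a $J$-1 slides one step into a newly empty column, exactly one adjacent $A_1$-row swap suffices to restore the sorted configuration, and the composite is a single transposition in $S_{k+n}$ because the affected rows and columns are adjacent. Together with Case A and the exclusion of swaps outside $J$, this exhausts the Bruhat covers in the image of $r\mapsto\pi(r)$ and matches them exactly with the two situations in the corollary.
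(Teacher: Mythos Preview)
Your approach is essentially the paper's own: both reduce to the embedding $r \mapsto \pi(r)$ set up in the proof of Theorem~\ref{thm:rankmatrices} and then sort the Bruhat covers of $\pi(r)$ according to which of the blocks $A_1$, $J$, $A_2$ contain the two swapped $1$s. The paper's proof is considerably terser than yours but follows the same outline.

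Two small corrections to your execution. First, the phrase ``moved one column to the left'' in the statement should not be read as a single step; as the corollary is used later (e.g.\ in the proof of Theorem~\ref{thm:filling}, where a pink dot ``moves due SW, to the first $\fslash 1$ below the kink''), the $J$-entry may jump several columns. Second, your concern about ``re-sorting'' $A_1$ after a cross-block swap is unnecessary: the Bruhat cover condition (no $1$ in the open rectangle) already forces there to be no empty $J$-column strictly between the two swapped columns, so the single transposition on $\pi$ lands directly on the minimum lift of the new $J$-block---no composite of swap-plus-adjustment is needed. With these two points cleared up, your Cases A--C match the paper's types (1) and (2) on the nose.
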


\begin{proof}
  The covering relations in $S_n$ Bruhat order, when expressed in terms
  of permutation matrices, are exactly as described in (1).
  When we embed $J(r),J(r')$ into $(n+k)\times (k+n)$ permutation
  matrices as in the proof of theorem \ref{thm:rankmatrices},
  we acquire more $1$s in the permutation matrix, hidden in the
  rectangles $A_1$ and $A_2$ (and $A_1',A_2'$). 
  The covering relations of type (2) are the ones that involve
  moving these hidden $1$s.
  (The covering relations involving them are rather limited by the fact
  that the $1$s in those rectangles are arranged NW/SE.)
\end{proof}

We denote covering relations by $r \lessdot r'$.

\begin{Lemma}\label{lem:intersectIRvars}
  The intersection of two interval rank varieties is a reduced union
  of other interval rank varieties. 
  The same follows for their positroid varieties inside $\Grkn$.
\end{Lemma}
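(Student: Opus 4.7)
The plan is to transport the intersection to the flag manifold via Theorem~\ref{thm:rankmatrices}, where the analogous statement for Schubert varieties follows from compatible Frobenius splitting, and then to translate the result back to $M_{k\times n}$. First I would use Theorem~\ref{thm:rankmatrices} to realize each $\barPi_r$ as a Kazhdan--Lusztig variety $X^\rho_{\pi\circ}=X_\pi\cap X^\rho_\circ$ inside $GL(k+n)/B$, noting crucially that the permutation $\rho$ depends only on $n$ and $k$, not on $r$. So for interval rank matrices $r,r'$ with associated block-form permutations $\pi,\pi'$, the scheme-theoretic intersection $\barPi_r\cap \barPi_{r'}$ transfers, under the isomorphism $C\cong M_{k\times n}$ from the proof of Theorem~\ref{thm:rankmatrices}, to the scheme-theoretic intersection $X_\pi \cap X_{\pi'}\cap X^\rho_\circ$.

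Next I would invoke the Mehta--Ramanathan theorem that the flag manifold is Frobenius split compatibly with every Schubert variety; this implies that a finite intersection of Schubert varieties is reduced and set-theoretically equal to the union of the maximal Schubert varieties it contains. Hence $X_\pi \cap X_{\pi'}=\bigcup_{\sigma\in S}X_\sigma$ as a reduced scheme, where $S$ is the set of Bruhat-maximal elements of $\{\sigma:\sigma\leq\pi,\ \sigma\leq\pi'\}$. Intersecting with the open cell $X^\rho_\circ$ preserves reducedness, giving
\[
\barPi_r\cap \barPi_{r'}\;\cong\;\bigcup_{\sigma\in S,\ \sigma\leq\rho} X^\rho_{\sigma\circ}
\]
as a reduced union of Kazhdan--Lusztig varieties.

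The main obstacle, which I would tackle next, is to show each summand $X^\rho_{\sigma\circ}$ really is an interval rank variety --- equivalently, that every maximal common lower bound $\sigma$ of $\pi$ and $\pi'$ in Bruhat order is again of the ``block form'' from the proof of Theorem~\ref{thm:rankmatrices}, so that $\sigma=\pi''$ for some interval rank matrix $r''$. I would prove this by induction on the number of Bruhat covering steps separating $\sigma$ from $\pi$, using Corollary~\ref{cor:coveringrelations}: the covering relations there describe exactly those Bruhat covers among block-form permutations that stay inside the block-form locus. The rectangle move (1) is internal to the $J$-block and visibly preserves the block form; the sliding move (2), which involves the auxiliary rectangles $A_1,A_2$, requires a careful but routine case check that a descent through a $1$ in $A_1$ or $A_2$ either stays in block form or can be replaced by an equivalent move that does.

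Finally, the Grassmannian statement is formal. The $GL(k)$-action on $M_{k\times n}$ preserves each $\barPi_r$, and $\Pi_r=GL(k)\dom(\barPi_r\cap \St_{k,n})$ is a principal-$GL(k)$-bundle quotient of a $GL(k)$-stable open subset of $\barPi_r$. Such quotients commute with intersection of $GL(k)$-stable subschemes, and the quotient map is faithfully flat so detects reducedness. Intersecting the reduced decomposition $\barPi_r\cap \barPi_{r'}=\bigcup_{r''}\barPi_{r''}$ with $\St_{k,n}$ and quotienting therefore yields $\Pi_r\cap \Pi_{r'}=\bigcup_{r''}\Pi_{r''}$ as a reduced union of interval positroid varieties in $\Grkn$.
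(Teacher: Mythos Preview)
Your overall architecture---transport to the flag manifold via Theorem~\ref{thm:rankmatrices}, use reducedness of Schubert intersections, then argue the components are again of the right shape---is the same as the paper's. Two issues, one minor and one substantial.

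Minor: in the paper's convention $X_\pi=\overline{B_-\pi B_+}/B_+$, one has $X_\sigma\subseteq X_\pi$ iff $\sigma\geq\pi$, so the components of $X_\pi\cap X_{\pi'}$ are the $X_\sigma$ for $\sigma$ a \emph{minimal common upper bound} of $\pi,\pi'$, not a maximal common lower bound. (The paper also works with matrix Schubert varieties and cites \cite{Fulton92,RamanathanACM} rather than Mehta--Ramanathan for reducedness, but either route is fine.)

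Substantial: your proposed attack on the ``main obstacle'' does not work. You want to walk from $\pi$ to $\sigma$ along a saturated Bruhat chain and invoke Corollary~\ref{cor:coveringrelations} at each step. But that corollary only classifies covers \emph{within} the block-form locus; it says nothing about an arbitrary cover $\tau\lessdot\tau'$ in $S_{n+k}$ starting at a block-form $\tau$. The endpoint $\sigma$ is a fixed permutation, determined by $\pi,\pi'$; you cannot ``replace'' a cover step in the chain by another and thereby change $\sigma$. Nor is there a usable inductive hypothesis: intermediate permutations in a chain from $\pi$ to $\sigma$ need not be block-form, and need not be least upper bounds of anything. So the induction never gets started.

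The paper's argument for this step is entirely different and much shorter. It observes that the block-form condition is (essentially) a pair of descent conditions: $\rho$ has no descent at positions $1,\ldots,k{-}1$, and $\rho^{-1}$ has none at positions $n{+}1,\ldots,n{+}k{-}1$. Each such condition is equivalent to $X_\rho$ being a union of fibers of the projection $GL(n{+}k)/B\to GL(n{+}k)/P_i$ (and its transpose analogue). Since an intersection of unions of fibers is again a union of fibers, every component $X_\sigma$ of $X_\pi\cap X_{\pi'}$ inherits the same descent restrictions, so $\sigma$ has the required form and $C\cap\barX_\sigma$ is again an interval rank variety. Your final paragraph on the Grassmannian statement is fine and matches the paper.
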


\begin{proof}
  The intersection $\barX_w \cap \barX_v$ of two matrix Schubert
  varieties is a reduced union of other matrix Schubert varieties
  (by \cite[proof of lemma 3.11 after lemma 6.1]{Fulton92}
  and \cite[theorem 3]{RamanathanACM}), namely those $\barX_u$ 
  where $u$ is a least upper bound in $S_n$ of $w,v$.
  Let $\pi_1,\pi_2$ be the $(k+n)\times(n+k)$ matrices associated in
  the proof of theorem \ref{thm:rankmatrices} to two interval rank varieties.

  If neither $w$ nor $v$ have a descent between positions $i,i+1$,
  then each $u$ won't either. (Proof: the descent condition says that
  the corresponding Schubert varieties $X_w,X_v \subseteq GL_n/B$ are
  unions of fibers of the map $GL_n/B \onto GL_n/P_i$, hence their
  intersection is too, hence any component of it is too.) 
  Using transpose, the same holds for $w^{-1},v^{-1}$.
  So for each component $\barX_\rho$ of $\barX_{\pi_1} \cap \barX_{\pi_2}$,
  the permutation $\rho$ has no descents in positions $1\ldots k$,
  and $\rho^{-1}$ has none in positions $n+1\ldots n+k$.
  Thus it is necessarily of the same form as the $\pi$ from 
  the proof of theorem \ref{thm:rankmatrices}, 
  and hence $C \cap \barX_\rho$ is again an interval rank variety.

  To compute the corresponding intersection inside $\Grkn$, 
  we intersect with the copy of the Stiefel manifold inside $C$.
  (This drops those components with $s>0$, for the $s$ 
  from the proof of theorem \ref{thm:rankmatrices}.)
\end{proof}

We now define the ``essential set'' for an interval rank matrix, an analogue
of Fulton's essential set for a Northwest rank matrix (used to define
matrix Schubert varieties). First draw lines strictly to the South,
and strictly to the West, of each $1$ in $J(r)$, which we think of as 
crossing out boxes. Then also cross out any empty row or column (with no $1$).
Call the remaining matrix entries (which {\em includes} all the $1$s)
the \defn{strict S/W diagram} of $J(r)$. Define the \defn{essential set} to be 
the Northeast corners of the strict S/W diagram. (Fulton's essential set, on
his different sort of rank matrix, is the SE corners of the weak S/E diagram.)

In the running example above, 
the essential rank conditions are $r_{33} \leq 0$, $r_{15} \leq 3$.

\begin{Proposition}\label{prop:essset}
  The interval rank variety 
  $ \barPi_r = \{ N \in M_{k\times n} : r(N) \leq r\text{ entrywise} \} $
  is defined as a scheme already by the rank conditions 
  $r(N)_{ij} \leq r_{ij}$ for essential boxes $(i,j)$ in $J(r)$'s
  strict S/W diagram.
\end{Proposition}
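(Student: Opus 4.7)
The plan is to deduce the proposition from Fulton's essential-set theorem for matrix Schubert varieties \cite{Fulton92}, via the isomorphism $\barPi_r \iso C \cap \barX_\pi$ in $GL(k+n)$ established in the proof of Theorem \ref{thm:rankmatrices}. That isomorphism matches Fulton's NW rank condition $rank(M_{\leq k+i,\leq j}) \leq i + r_{i+1,j}$ at box $(k+i,j)$ with the interval rank condition $rank(N_{[i+1,j]}) \leq r_{i+1,j}$ at the interval $[i+1,j]$. Fulton's theorem says $\barX_\pi$ is cut out scheme-theoretically already by the rank conditions indexed by his essential set $E(\pi)$, the SE corners of the Rothe diagram $D(\pi)$; these equations restrict to the corresponding interval rank equations on the affine subspace $C$. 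It therefore suffices, under the reindexing $(k+i,j)\leftrightarrow(i+1,j)$, to identify Fulton's essential set $E(\pi)$ with the NE corners of the strict S/W diagram of $J(r)$ (or at least to show that the latter conditions imply the former via submatrix monotonicity).

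First I would check that $E(\pi)$ is contained in the $J$-block (rows $k+1,\ldots,k+n$ and columns $1,\ldots,n$). This is essentially what was asserted in the proof of Theorem \ref{thm:rankmatrices}: because the blocks $A_1$, $A_2$, and $Id_s$ were built to place their $1$s as far NW as possible in their respective regions, the Rothe diagram of $\pi$ has no SE corners outside the $J$-block, so I would simply flesh out that assertion.

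The substantive step is the combinatorial match inside the $J$-block. Unwinding the Rothe condition ``$\pi(c)>d$ and $\pi^{-1}(d)>c$'' with the help of the $A_1,A_2$ structure --- $A_1$'s $1$s sit in the empty columns of $J(r)$ at rows $\leq k$, striking those columns Southward throughout the $J$-block, while $A_2$'s $1$s sit in the empty rows of $J(r)$ at columns $>n$, effectively marking those rows as ``deleted'' for the purposes of the Fulton diagram's SE corners --- the Rothe condition at $(k+i,j)$ translates into the purely $J(r)$-local statement that row $i$ and column $j$ of $J(r)$ each contain a $1$, and that these $1$s are positioned so that $(i,j)$ is not struck in the S/W sense. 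Fulton's SE corners then correspond, under the direction-flip that converts Eastward strikes to Westward strikes via the $A_1$-bookkeeping, to NE corners of the strict S/W diagram of $J(r)$; the ``delete empty rows and columns'' rule in the paper's definition is exactly the residue of $A_1$, $A_2$ inside the $J$-block.

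The main obstacle will be this last combinatorial verification: the direction-flip and the empty-row/column deletion both arise from the auxiliary blocks $A_1,A_2$ of $\pi$ and must be tracked simultaneously, with careful attention to corner cases (the boundary rows/columns and the role of $Id_s$). Once the identification is established, Fulton's essential-set theorem applied to $\barX_\pi$, pulled back through $C$, gives the scheme-theoretic description of $\barPi_r$ cut out by the NE-corner rank conditions, proving the proposition.
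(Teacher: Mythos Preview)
Your approach is viable but genuinely different from the paper's, and the route you outline is more circuitous than you may expect.

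The paper gives a direct, elementary argument that does not invoke Fulton's theorem at all.  It simply shows that every interval rank condition $r(N)_{ij}\leq r_{ij}$ is implied by a neighboring one: if $(i,j)$ is crossed out (say from the North), then $r_{ij}=r_{i,j-1}+1$, and Laplace expansion of each $(r_{ij}+1)$-minor of $N_{[i,j]}$ along column $j$ writes it in the ideal of $(r_{i,j-1}+1)$-minors of $N_{[i,j-1]}$; similarly for the other directions.  Iterating walks each condition to a NE corner.  This is a few lines.

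Your approach does work, but two points deserve emphasis.  First, the two essential sets do \emph{not} coincide: as the remark following the proposition observes (and the running example illustrates, where the NE corners are $(1,5)$ and $(3,3)$ but Fulton's essential set for $\pi$ reindexes to only $(3,3)$), the NE corners of the strict S/W diagram can be a strict superset of Fulton's essential set under the reindexing $(k+i,j)\leftrightarrow(i+1,j)$.  So the correct claim is the inclusion $E(\pi)\hookrightarrow\{\text{NE corners}\}$, not an identification; fortunately that inclusion is exactly what you need, since then the NE-corner ideals contain the Fulton-essential ideals, hence cut out all of $\barPi_r$.  Second, verifying that inclusion is itself a careful case analysis (tracking how the $A_1$, $A_2$ blocks convert Fulton's SE-corner conditions on $D(\pi)$ into the S/W-diagram's NE-corner conditions on $J$, and handling the boundary row $k$ and column $n$), and is at least as much work as the paper's direct proof.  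What your approach buys is a conceptual explanation of \emph{why} the S/W essential set is defined as it is---it is the pullback of Fulton's picture through the $C$-embedding---at the cost of importing Fulton's theorem and doing the bookkeeping.
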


\begin{proof}
  First, let $(i,j)$ be a matrix entry not lying in the strict S/W diagram
  at all.
  Thus $(i,j)$ is crossed out, say from the North,
  either by a $1$ to the North at $(i'<i, j)$
  or because it is in an empty column.
  (The cases of being crossed out from the East will work the same way.)
  Therefore there is no $1$ lying weakly South of $(i,j)$. 
  Hence $r_{ij} = r_{i\ j-1} + 1$, so the $(i,j)$ rank condition
  is implied by the $(i,j-1)$ rank condition.

  Now assume $(i,j)$ is in the strict S/W diagram, but is not a NE corner.
  Then there is another diagram box at $(i-1,j)$ or $(i,j+1)$; we treat
  the first case. Since $(i-1,j)$ is not crossed out, it {\em has} a $1$ weakly
  to its West, at some $(i-1,j'<j)$. 
  Hence $r_{ij} = r_{i-1\ j}$, so the $(i,j)$ rank condition
  is implied by the $(i-1,j)$ rank condition.

  This lets us trace each rank condition $(i,j)$ outside the
  ``essential set'' to another rank condition with the same rank bound
  to the North or East, or, to one with a lower rank bound to the South or West.
  Clearly this process must terminate, at an essential box. 
  So the rank conditions from the essential boxes imply all the others.
\end{proof}

The term ``essential'', taken from \cite{Fulton92}, is misleading; 
if $r_{i,k} = r_{i,j} + r_{j+1,k}$, and the latter two define
essential rank conditions, the $r_{i,k}$ condition is certainly implied
but may also be ``essential'', as occurs in the example in \S \ref{sec:ex}.
(This phenomenon does not occur in Fulton's context 
\cite[lemma 3.14]{Fulton92}.) 

Given a subvariety $Y\subseteq \Grkn$, let $\lambda(Y)$ denote
the maximum $\lambda$ such that $Y \subseteq X_\lambda$.
(Proof of existence: if $Y \subseteq X_\lambda$ and $Y \subseteq X_\mu$,
then since $Y$ is irreducible it is contained in one of the components
$X_\nu$ of $X_\lambda \cap X_\mu$, and $\nu \geq \lambda,\mu$.) 
Similarly, let $\mu(Y)$ denote
the minimum $\mu$ such that $Y \subseteq X^\mu_\circ$.
If $Y$ is $T$-invariant, then $\lambda(Y),\mu(Y)$ are the minimum and
maximum of $Y^T$ in Bruhat order.
Call $X_{\lambda(Y)}^{\mu(Y)}$ the \defn{Richardson envelope} of $Y$;
it is the unique smallest Richardson variety containing $Y$.

\begin{Proposition}\label{prop:envelope}
  Let $r$ be an interval rank matrix of size $n$ with $r_{1n} = k$.
  Then $\lambda(\Pi_r)$ has its $1$s in the empty rows of $J(r)$,
  and $\mu(\Pi_r)$ has its $1$s in the empty columns of $J(r)$.
  The codimension of $\Pi_r$ inside its Richardson envelope
  $X_{\lambda(\Pi_r)}^{\mu(\Pi_r)}$ is the number of pairs of $1$s in $J(r)$
  arranged NE/SW.  
\end{Proposition}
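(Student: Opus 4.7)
The plan is to reduce each of the three claims to a boundary computation on the rank matrix $r$, then invoke the Kazhdan-Lusztig length formula from the proof of Theorem~\ref{thm:rankmatrices}. First I would characterize $\lambda(\Pi_r)$: since $X_\lambda$ is cut out by rank bounds only on initial windows $M_{[1,j]}$, the containment $\Pi_r \subseteq X_\lambda$ is equivalent to $r_{1,j} \leq \#\{1\text{s of }\lambda_{\leq j}\}$ for every $j$ (the bound $r_{1,j}$ being generically achieved on $\barPi_r$). From Theorem~\ref{thm:rankmatrices}, $r_{1,j} = j - \#\{1\text{s of }J(r) \text{ with column index} \leq j\}$, so $r_{1,j}$ advances by exactly one at position $j$ iff column $j$ of $J(r)$ is empty. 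The extremal $\lambda$ realizing $\#\{1\text{s of }\lambda_{\leq j}\} = r_{1,j}$ for all $j$ therefore has its 1s at the empty columns of $J(r)$. The symmetric argument for $\mu(\Pi_r)$, using $r_{i,n} = (n-i+1) - \#\{1\text{s of }J(r) \text{ with row index} \geq i\}$ against $X^\mu$, places $\mu$'s 1s at the empty rows of $J(r)$. (Should the statement's ``rows'' and ``columns'' need swapping for convention reasons, the two dual halves of the argument make this apparent.)

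Next I would identify the Richardson envelope $X_{\lambda(\Pi_r)}^{\mu(\Pi_r)}$ as an interval rank variety $\Pi_{r_\mathrm{Rich}}$: by Lemma~\ref{lem:intersectIRvars} this intersection is a reduced union of interval positroid varieties, and a Richardson is irreducible, so there is just one component. Applying the first paragraph's characterization to this envelope (using $\lambda(X_\lambda^\mu) = \lambda$ and $\mu(X_\lambda^\mu) = \mu$) forces $J_\mathrm{Rich}$ to have the same empty rows and columns -- hence the same row and column supports -- as $J(r)$, leaving only the matching between those supports free. Because $J_\mathrm{Rich}$ corresponds to the \emph{largest} interval rank variety on that support (the envelope contains $\Pi_r$), and swapping a pair of 1s from an NW/SE matching to an NE/SW matching strictly enlarges some southwest count and so strictly shrinks the variety, the 1s of $J_\mathrm{Rich}$ must sit in the NW/SE (sorted) matching.

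For the codimension I would use the Kazhdan-Lusztig isomorphism: $\dim \barPi_r = kn - \ell(\pi)$ and $\dim \barPi_{r_\mathrm{Rich}} = kn - \ell(\pi_\mathrm{Rich})$, so the codimension equals $\ell(\pi) - \ell(\pi_\mathrm{Rich})$. The permutations $\pi$ and $\pi_\mathrm{Rich}$ share the same $A_1$, $A_2$, and identity blocks -- all determined by the empty rows and columns of $J$ alone -- and differ only inside the $J$-block. A direct case analysis shows that every inversion of $\pi$ with one endpoint in the $J$-block and one outside depends only on the row support and column support of $J$'s 1s, not on their matching; for instance, inversions between the $J$-block and $A_1$ contribute, for each 1 of $J$ at $(a,b)$, the number of empty columns of $J$ that exceed $b$. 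These cross-block contributions therefore cancel in the difference. What survives is the count of inversions internal to the $J$-block, which for $\pi$ are precisely the NE/SW pairs of 1s of $J(r)$, while $\pi_\mathrm{Rich}$'s NW/SE arrangement contributes none.

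The hardest step will be the inversion bookkeeping for the codimension: enumerating all cross-block inversions of $\pi$ and verifying that each count depends only on the supports of $J$'s 1s. The preceding steps are relatively routine once the Kazhdan-Lusztig dictionary from Theorem~\ref{thm:rankmatrices} is in hand.
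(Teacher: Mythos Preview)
Your argument is correct and shares the paper's core strategy: read $\lambda(\Pi_r)$ and $\mu(\Pi_r)$ from the first row and last column of $r$, then compute $\dim \Pi_r$ via the Kazhdan--Lusztig identification of Theorem~\ref{thm:rankmatrices} and count inversions of $\pi$ block by block. The paper's route for the codimension is slightly more direct than yours: rather than introduce a second permutation $\pi_{\mathrm{Rich}}$ and cancel the cross-block inversions, it observes that the $A_1$--$J$ inversions count exactly $\codim\big(X^{\mu(\Pi_r)}\subseteq\Grkn\big)$ and the $A_2$--$J$ inversions count $\codim\big(X_{\lambda(\Pi_r)}\subseteq\Grkn\big)$, so that $\ell(\pi) = \codim\big(X_{\lambda(\Pi_r)}^{\mu(\Pi_r)} \subseteq \Grkn\big) + c$ and the result follows from the known dimension of a Richardson variety. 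Your detour through realizing the envelope as $\Pi_{r_{\mathrm{Rich}}}$ with a NW/SE-sorted $J_{\mathrm{Rich}}$ is valid but not needed; what you gain is a self-contained computation that never appeals to the transversality of $X_\lambda \cap X^\mu$, at the cost of the extra characterization of $J_{\mathrm{Rich}}$. (Your suspicion about the row/column swap in the statement is well-founded; the paper's own proof text reads the boundary data the same way you do.)
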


In our running example, this number of pairs is $1$.
The Richardson envelope $X_{01011}^{11010}$ is defined by 
$r_{13} \leq 2$, $r_{35} \leq 2$, $r_{15} \leq 3$. 
In that larger variety (or really its Stiefel cone in matrix space), 
the middle column is a vector contained in the
$2$-plane spanned by the left three columns, intersect the $2$-plane
spanned by the right three columns, inside the ambient $3$-space,
making that column unique up to scale. 
Hence imposing $r_{33}=0$, that that column
be the zero vector, only drops the dimension by $1$, which is the
computed codimension in the Richardson envelope.

\begin{proof}
  The minimum $\lambda(\Pi_r)$ is determined by the
  rank conditions $\barX_r$ satisfies on its initial intervals $\{[1,j]\}$,
  i.e. the first row $(r_{1j})$ of $r$. That, in turn, is determined
  by the empty columns of $J(r)$. The same analysis connects $\mu(\Pi_r)$
  to the last column $(r_{jn})$ of $r$ to the empty rows of $J(r)$. 

  Then, using theorem \ref{thm:rankmatrices}, 
  $$ \dim \Pi_r = \dim \barX_r - \dim GL(k) = \dim X^\rho_{\pi\circ} - k^2
  = \ell(\rho) - \ell(\pi) - k^2 
  = k(n-k) - \ell(\pi) $$
  where (as in its proof)
  $$ \pi = \left[
    \begin{array}{cc}
      A_1 & 0 \\
      J & A_2 
    \end{array}    \right], \qquad
  \rho = 
  n\!+\!1\ n\!+\!2\ \ldots n\!+\!k\ 1\ 2\ \ldots\ n.
  $$
  To determine $\ell(\pi)$, we count inversions, i.e. pairs of $1$s
  in $\pi$ aligned NE/SW rather than NW/SE. These pairs come in
  three types: one in the $A_1$ block and one in the $J$ block,
  one in the $A_2$ block and one in the $J$ block,
  or both in the $J$ block.

  Each such pair with one $1$ in the $A_1$ block and 
  one $1$ in the $J$ block corresponds to a $1$ in $\mu(\Pi_r)$ occurring
  before a $0$, so the number of them is 
  $\codim \left(X^{\mu(\Pi_r)} \subseteq \Grkn\right)$.
  Similarly,
  the number of such pairs with one $1$ in the $A_2$ block and one $1$ 
  in the $J$ block is $\codim \left(X_{\lambda(\Pi_r)} \subseteq \Grkn\right)$. 
  Hence 
  $\ell(\pi) 
  = \codim \left(X_{\lambda(\Pi_r)}^{\mu(\Pi_r)} \subseteq \Grkn\right) + c$,
  where $c$ is the number of NE/SW pairs in $J(r)$. Finally, 
  \begin{eqnarray*}
    \codim \left(\Pi_r \subseteq X_{\lambda(\Pi_r)}^{\mu(\Pi_r)}\right)
    &=& \dim X_{\lambda(\Pi_r)}^{\mu(\Pi_r)} - \dim \Pi_r \\
    &=& \dim \Grkn 
    - \codim \left(X_{\lambda(\Pi_r)}^{\mu(\Pi_r)} \subseteq \Grkn\right)
    - k(n-k) + \ell(\pi) 
    \\     &=& c.
  \end{eqnarray*}
  \vskip -.3in
\end{proof}


For later use, we will want some handle on the $T$-fixed points
$\left(\Pi_r\right)^T \subseteq \Grkn^T 
= \{k$-dimensional coordinate subspaces$\}
\iso \{$words $\lambda$ of length $n$ with $n-k$ $0$s and $k$ $1$s$\}$.

\begin{Lemma}\label{lem:matchings}
  Let $r$ be an interval rank matrix, let $\lambda$ be
  a word of length $n$ with $n-k$ $0$s and $k$ $1$s, and
  $V_\lambda$ the corresponding $k$-dimensional coordinate subspace.
  \begin{enumerate}
  \item If for some $(i,j)$, $r_{ij} < \sum_{k\in [i,j]} \lambda_k$,
    then $V_\lambda \notin \Pi_r$.
  \item Let $supp(J(r))$ denote the locations of the $1$s in the partial 
    permutation matrix corresponding to $r$, 
    and $m : supp(J(r)) \to \{1,\ldots,n\}$ an injection such that
    $m\big((i,j)\big) \in \{i,\ldots,j\}$. Let $\lambda$ be $0$ on the
    image of $m$ (for \defn{matching}), and $1$ on the complement. 
    Then $V_\lambda \in \Pi_r$.
  \item (Hall's marriage theorem for interval rank varieties)
    If $V_\lambda \in \Pi_r$, then there exists a matching $m$ 
    as described above.
  \item Let $r' \gtrdot r$ be a covering relation 
    as in corollary \ref{cor:coveringrelations}, and $d$ be the position
    of the unique $1$ (if type (2)) or the Southwestern of the two $1$s
    (if type (1)) in $supp(J(r'))\setminus supp(J(r))$. 
    Let $m$ be a matching of $r$, and think of it as a map from $supp(J(r))$
    to the diagonal of $r$, whose image has complement $\lambda$.

    If any $1$ in a row above $d$ is matched to an entry in a row above $d$,
    and any $1$ in a column to the right of $d$ is matched to an entry 
    in a column to the right of $d$, 
    then $V_\lambda \in \Pi_r \setminus \Pi_{r'}$.
\junk{

    More specifically, define $m_{left},m_{right}: supp(J(r))\to \{1,\ldots,n\}$
    by
    $$ m_{left}(e) = 
    \begin{cases}
      i(e) & \text{if } j(e)<j(d) \\
      j(e) & \text{if } j(e)\geq j(d),
    \end{cases}
    \qquad\qquad
    m_{right}(e) = 
    \begin{cases}
      i(e) & \text{if } i(e)<i(d) \\
      j(e) & \text{if } i(e)\geq i(d).
    \end{cases}
    $$
    Then $m_{left},m_{right}$ are matchings of this sort, 
    and the complements $\lambda_{left},\lambda_{right}$ of their images
    have $V_{\lambda_{left}},V_{\lambda_{right}} \in \Pi_r \setminus \Pi_{r'}$.
}
  \end{enumerate}
\end{Lemma}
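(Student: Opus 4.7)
Parts (1) and (2) are direct consequences of the formula $rank(V_\lambda|_{[i,j]}) = \sum_{\ell \in [i,j]} \lambda_\ell$ (with $V_\lambda$ represented by the matrix whose rows are $\{e_\ell^T : \lambda_\ell = 1\}$). For (1), if this sum exceeds $r_{ij}$ the rank inequality defining $\barPi_r$ fails. For (2), each $1$ of $J(r)$ that is SW of $(i,j)$---i.e., each $(i',j') \in \mathrm{supp}(J(r))$ with $[i',j'] \subseteq [i,j]$---has $m((i',j')) \in [i',j'] \subseteq [i,j]$, so by injectivity of $m$ the count $|Z(\lambda) \cap [i,j]|$ of zeros of $\lambda$ in $[i,j]$ is at least $|[i,j]| - r_{ij}$, forcing $\sum_{\ell \in [i,j]} \lambda_\ell \leq r_{ij}$.

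For (3), I apply Hall's marriage theorem to the bipartite graph with left vertex set $\mathrm{supp}(J(r))$, right vertex set $Z(\lambda) := \{\ell : \lambda_\ell = 0\}$, and edge $(i',j') \sim \ell$ whenever $\ell \in [i',j']$. Given $S \subseteq \mathrm{supp}(J(r))$, decompose the neighborhood $N(S)$ into its maximal intervals $[a_1, b_1] \sqcup \cdots \sqcup [a_s, b_s]$; each $(i',j') \in S$ has $[i',j']$ contained in exactly one $[a_t, b_t]$, giving a partition $S = \bigsqcup_t S_t$ with $|S_t| \leq |[a_t, b_t]| - r_{a_t, b_t}$ because $S_t$ consists of 1s of $J(r)$ SW of $(a_t, b_t)$. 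The hypothesis $V_\lambda \in \Pi_r$ provides $|Z(\lambda) \cap [a_t, b_t]| \geq |[a_t, b_t]| - r_{a_t, b_t}$, so summing over $t$ verifies Hall's condition and yields the matching.

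Part (4) is the substantive step. Since (2) already gives $V_\lambda \in \Pi_r$, it suffices to produce an interval $[A, B]$ on which $V_\lambda$ saturates $r_{AB}$, so that it violates the strictly tighter $r'_{AB} = r_{AB} - 1$. I case-split on Corollary \ref{cor:coveringrelations}. For type~(2), with a $1$ moving from $(i, j)$ to $d = (i, j - 1)$ (the row-shift case is symmetric, using $[A, B] = [i+1, n]$ and the row hypothesis), a direct SW count shows $r_{AB} - r'_{AB} = 1$ precisely for $A \leq i$ and $B = j - 1$, and I take $(A, B) = (1, j - 1)$. The column hypothesis reads, contrapositively, $m((i', j')) \leq j - 1 \Rightarrow j' \leq j - 1$, while the converse is automatic from $m((i', j')) \leq j'$; thus $|\{m \in [1, j-1]\}| = |\{j' \leq j - 1\}| = |[1, j-1]| - r_{1, j-1}$, so $V_\lambda$ saturates $r_{1, j-1}$. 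For type~(1), the 1s of $J(r)$ at $(\alpha, \gamma), (\beta, \delta)$ (with $\alpha < \beta \leq \gamma < \delta$) are replaced in $J(r')$ by $(\alpha, \delta), (\beta, \gamma)$, and $d = (\beta, \gamma)$; counting SW 1s shows $r_{AB} - r'_{AB} = 1$ exactly for $\alpha < A \leq \beta$ and $\gamma \leq B < \delta$, and I pick $(A, B) = (\beta, \gamma)$. The row hypothesis forces $m((\alpha, \gamma)) < \beta$ and the column hypothesis forces $m((\beta, \delta)) > \gamma$, removing both special 1s from $\{m \in [\beta, \gamma]\}$; for every other 1 the two hypotheses together yield $m \in [\beta, \gamma] \Leftrightarrow [i', j'] \subseteq [\beta, \gamma]$, and saturation follows.

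The main obstacle is (4): one must pair each type of covering relation with the correct interval $[A, B]$ so that the row/column compatibility hypotheses on $m$ translate exactly into the equality $|\{(i', j') : m((i', j')) \in [A, B]\}| = |\{(i', j') : [i', j'] \subseteq [A, B]\}|$, which is equivalent to $V_\lambda$ saturating $r_{AB}$ and hence failing the one-unit-smaller $r'_{AB}$.
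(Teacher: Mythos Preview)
Your argument is correct, but parts (3) and (4) take a different route from the paper. For (3), the paper does not invoke Hall's theorem directly; instead it inducts along covering relations $r' \gtrdot r$ (base case $J(r)$ diagonal), showing that a matching for $r'$ can be modified into one for $r$ with the same image. Your direct Hall argument is cleaner and more self-contained, though note a small slip: with right vertex set $Z(\lambda)$, the neighborhood $N(S)$ need not be a union of genuine intervals of $\{1,\dots,n\}$; what you actually decompose is $\bigcup_{(i',j')\in S}[i',j']$, and then $N(S)$ is its intersection with $Z(\lambda)$. For (4), the paper observes that the single interval $[i(d),j(d)]$ works uniformly in all cases: both the row and column hypotheses together force $m((i',j'))\in[i(d),j(d)]\Leftrightarrow[i',j']\subseteq[i(d),j(d)]$, and $r'_{i(d),j(d)}=r_{i(d),j(d)}-1$ by the definition of $d$. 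Your type~(1) case is exactly this; for type~(2) you instead take $[1,j-1]$ or $[i+1,n]$, which has the minor advantage of using only one of the two hypotheses at a time, at the cost of the unifying observation.
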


\begin{proof}
  Let $M_\lambda$ be the $k\times n$ matrix with the $k\times k$ 
  identity matrix in the $k$ columns $\{i :\lambda_i=1\}$, and other
  columns $0$. Then $V_\lambda$ is the row-span of $M_\lambda$,
  so $V_\lambda \in \Pi_r$ iff $M_\lambda \in \barPi_r$.
  Then since $r(M_\lambda)_{ij} = \sum_{k\in [i,j]} \lambda_k$,
  we have $M_\lambda \in \barPi_r$ iff 
  $r_{ij} \geq \sum_{k\in [i,j]} \lambda_k$ for all $i\leq j$.
  \begin{enumerate}
  \item If some $r_{ij} < 
    r(M_\lambda)_{ij}$, 
    then $M_\lambda \notin \barPi_r$, hence $V_\lambda \notin \Pi_r$.
  \item For each $i\leq j$, 
    \begin{eqnarray*}
       r(M)_{ij} 
    = \sum_{k\in [i,j]} \lambda_k 
    &=& \big| [i,j] \setminus m(supp(J(r))) \big| \\
    &\leq& \big| [i,j] 
    \setminus m\left(supp(J(r)) \text{ southwest of }(i,j) \right) \big| \\
    &=& \big| [i,j] \big|
    - \big| supp(J(r)) \text{ southwest of }(i,j) \big| 
    = r_{ij}.
    \end{eqnarray*}
  \end{enumerate}
  Part (3) is trivial if $J(r)$ is diagonal, and $m$ is
  the identity map, if we think of its target set as the diagonal entries
  $\{(i,i)\}$. We use this as the base case for an induction.

  The induction step is to prove that if $r' \gtrdot r$ is a covering relation 
  of type (1) or (2) in the sense of corollary \ref{cor:coveringrelations}, 
  then the set of coordinate subspaces in $\Pi_{r'}$ with matchings is
  contained in the set of coordinate subspaces in $\Pi_{r}$ with matchings.

  If $r' \gtrdot r$ is a type (2) covering relation, one $1$ in $J(r')$
  moves one step North or East to give $J(r)$, giving a simple correspondence
  $c$ between their sets of $1$s. If $m'$ is a matching for $r'$, 
  then $m := m' \circ c$ is a matching for $r$; the $1$ in $J(r)$ first
  maps South or West one step via $c$, then maps SW via $m'$.

  If $r' \gtrdot r$ is a type (1) covering relation, then two $1$s in $J(r')$
  at positions $p$-NE-of-$q$ move to give those in $J(r)$, 
  at positions $s$-NW-of-$t$. 
  To define a correspondence $c$ as in the type (2) case, 
  and take $m := m' \circ c$, 
  we need to choose $p\mapsto s, q\mapsto t$ or $p\mapsto t, q\mapsto s$.
  There are three possibilities:
  \begin{itemize}
  \item $m'(p)$ is SW of $q$, hence SW of both $s$ and $t$. 
    Then either possibility for $c$ produces a suitable $m$.
  \item $m'(p)$ is SW of $s$, but not SW of $t$. 
    Then $c$ should take $p\mapsto s, q\mapsto t$.
  \item $m'(p)$ is SW of $t$, but not SW of $s$. 
    Then $c$ should take $p\mapsto t, q\mapsto s$.
  \end{itemize}
  For example, if $r'$ is the $5\times 5$ matrix in our running example 
  and $m'$ maps each both $p$ and $q$ due West (so $\lambda = 01011$), 
  then we face the second possibility. 
  Whereas if $m'$ maps each due West (so $\lambda = 11010$), we face the third.
  The first possibility cannot occur for this $r'$ since $m'(q)$ is forced and
  $m'$ is injective.

  For part (4), let $i,j$ be the row and column of $d$. 
  By the condition on $m$, $r(M_\lambda)_{ij} = r_{ij}$. 
  But by the condition on $d$, $r'_{ij} = r_{ij} - 1$. 
  So $r(M_\lambda)_{ij} > r'_{ij}$, hence $V_\lambda \notin \Pi_{r'}$.
\end{proof}


As we explained in the introduction, the varieties $\Pi_r$ are a
special class of ``positroid varieties'' \cite{KLS}, which in their
full generality allow for rank conditions on {\em cyclic} intervals.
There is another connection between 
the $\{\barPi_r\}$ and positroid varieties (much like the double connection 
between matrix Schubert varieties and Schubert varieties), as follows.
Embed $\barPi_r \times M_{k\times k}$ into $M_{k\times(n+k)}$ by imposing no 
rank conditions on the last $k$ columns; 
this is $\barPi_{r'}$ for a suitable $r'$.
Then $\barPi_r$ is isomorphic to the affine open set on the positroid variety
$GL(k) \dom (\barPi_{r'} \cap \St_{k,n+k})$ where one asks that the
last $k$ columns are linearly independent. In her thesis \cite{Snider}
Snider shows more generally that the natural affine patches on arbitrary 
positroid varieties are isomorphic to certain Kazhdan-Lusztig varieties 
in the {\em affine} flag manifold.

\section{The Vakil variety of a puzzle path}\label{sec:variety}

Let $\gamma$ be a (labeled) puzzle path, as defined in \S\ref{ssec:puzzleintro}.
In this section we will use $\gamma$ to single out $n-k$ horizontal edges
in the puzzle triangle (possibly along the bottom), and use them to
define interval rank conditions. 

There are a number of conditions that we require the labels on $\gamma$
to satisfy, all of which are implied by ``there should be a way to successively
add puzzle pieces to $\gamma$, culminating in a final path''. 
These conditions are:
\begin{itemize}
\item On the boundary of the puzzle, there are only $0$s and $1$s, no $R$s
  or $K$s. (In particular, on initial or final paths 
  there are only $0$s and $1$s, no $R$s.)
\item The only place a $K$ may appear is on the kink, so $\bslash K$.
\item Say the first $i$ steps of $\gamma$ are SE. 
  The number of $\bslash 0$s on those edges should be at least the
  number of $\dash 0$s in the last $i$ steps (necessarily all West).
\item The number of $\bslash 0$s 
  should equal the number of $\fslash R$s plus the number of $\dash 0$s.
  (In particular, on final paths as on initial paths,
  there are only $0$s and $1$s, no $R$s or $K$s.)
\item If the kink is $\bslash R$ or $\bslash K$, 
  after it there must be a $\fslash 1$ or $\dash 1$ 
  before there is any $\fslash R$ or $\dash 0$. 
\item If the kink is $\bslash 0$ or $\bslash K$, 
  after it there must be an $\fslash R$ or $\dash 0$ 
  before there is any $\dash 1$.
\item If the kink is $\bslash K$,
  after it there must be a $\fslash 1$ 
  before any $\fslash R$ or $\dash 0$, in turn
  before any $\dash 1$ 
  (as implied by the previous two conditions).
\end{itemize}

\subsection{Pink rays and pink dots}

We first draw $n-k$ \emph{pink rays} aligned NE/SW, and $n-k$ more
aligned NW/SE. At certain crossing points of these rays, we will place $n-k$
\emph{pink dots}. These define a partial permutation as in theorem
\ref{thm:rankmatrices}, giving rank conditions
$$ rank(M_{[i,j]}) 
\ \leq\  
|[i,j]| - \#\{\text{pink dots on edges $e$ with }i\leq i(e) \leq j(e) \leq j\}$$
where $(i(e),j(e))$ were defined in \S \ref{ssec:puzzleintro}.

\begin{figure}[htbp]
  \centering
  \epsfig{file=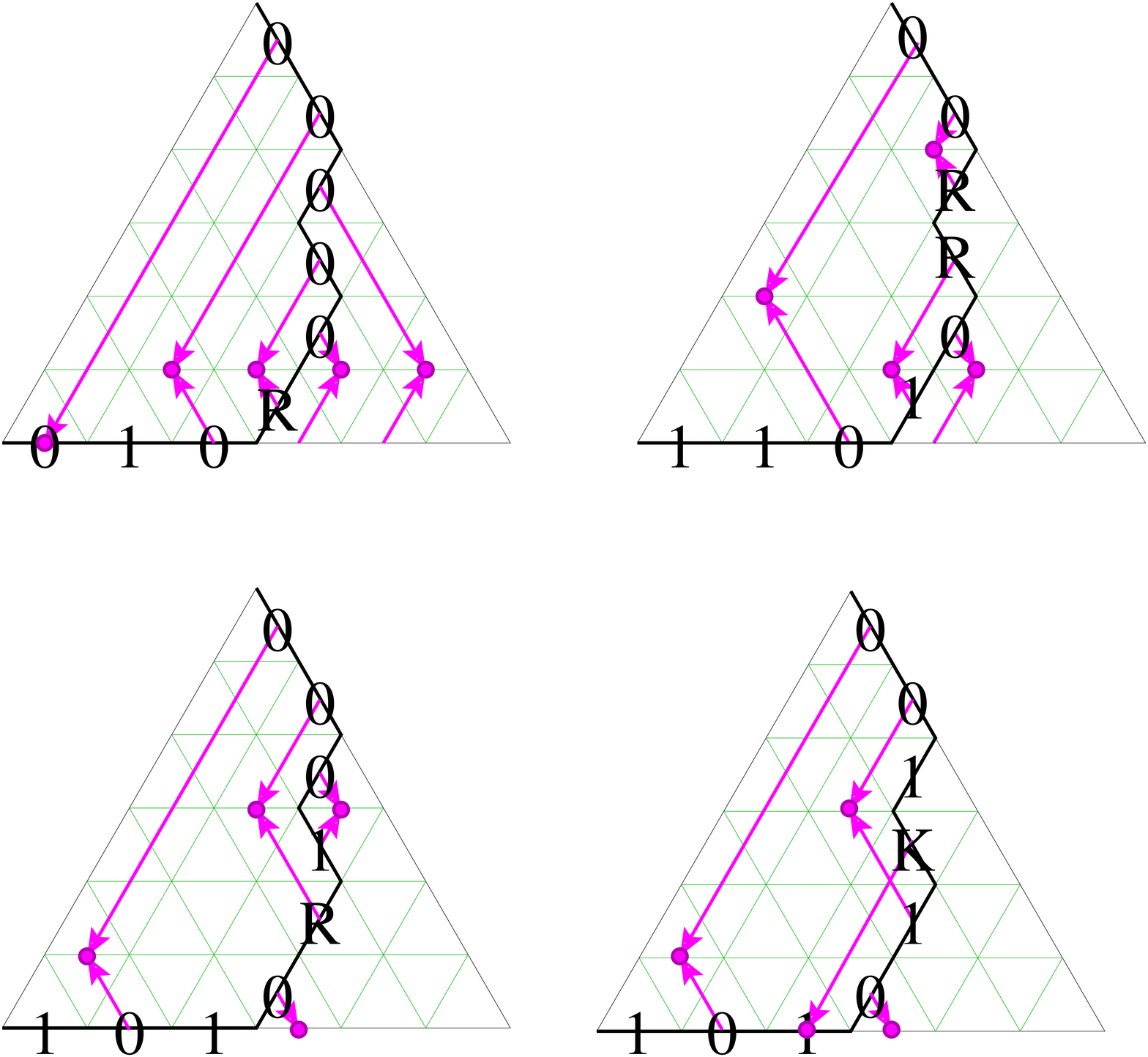,width=5in}
  \caption{Four labelings on the same unlabeled puzzle path, with their
    associated pink rays colliding at pink dots.}
  \label{fig:puzzlepathexs}
\end{figure}

Each pink ray emanates from the midpoint of an edge, so does {\em not}
follow puzzle edges (rather, it is only parallel to them). 
Cut the puzzle triangle into a left half and right half along $\gamma$.
On the left side of $\gamma$:
\begin{itemize}
\item The rays are aligned NW \& SW.
\item Each $\bslash 0$ (which may include the kink) 
  has a SW-pointing ray.
\item Each $\dash 0$ and $\fslash R$ has a NW-pointing ray.
\item If the kink is $\bslash R$ or $\bslash K$, it gets a SW-pointing ray,
  and causes the next $\fslash 1$ South of it to get a NW-pointing ray.
\end{itemize}
On the right of $\gamma$:
\begin{itemize}
\item The rays are aligned NE \& SE. 
\item Each $\fslash 0$ has a SE-pointing ray.
\item If the kink is $\bslash 1$, and there is a
  $\fslash 0$ somewhere above it, the $\bslash 1$ gets a NE-pointing ray.
\item Immediately to the right of the rightmost edge of $\gamma$ on
  the bottom of the triangle, enough other NE-pointing rays are placed
  to match the number of SE-pointing rays. These are the only rays 
  that come out of puzzle edges not in $\gamma$.
\end{itemize}

We will not extend these rays forever, but only to certain crossings,
which we will label with ``pink dots''.

\begin{Theorem}\label{thm:pinkdots}
  Let $\gamma$ be a puzzle path, with pink rays attached as described above.
  Assume first that the kink is not labeled $\bslash K$.

  Then there is a unique way to pair up the pink rays, such that each 
  pair of paired rays are extended to intersect at a \defn{pink dot} (on a 
  horizontal edge), and there are no other intersections of pink rays.

  If the kink is labeled $\bslash K$, then almost the same is true:
  there is one crossing (with no pink dot), 
  of the pink ray coming SW out of the $\bslash K$ and 
  the one coming NW out of the next $\fslash 1$.

  No pink dot is in the same column (NW/SE or NE/SW) as another.
\end{Theorem}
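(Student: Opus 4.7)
The plan is to view the pairing as two separate planar non-crossing matchings, one in each of the two regions that $\gamma$ cuts the puzzle triangle into, exploiting the fact that two pink rays can intersect at a single point only if one is NE/SW-aligned and the other is NW/SE-aligned.

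First I would verify that the counts of rays balance within each region. On the left, the SW-pointing rays (one per $\bslash 0$, plus one from the kink if it is $\bslash R$ or $\bslash K$) and the NW-pointing rays (one per $\dash 0$, one per $\fslash R$, plus one from the triggered $\fslash 1$ if the kink is $\bslash R$ or $\bslash K$) balance thanks to the labeling identity $\#\bslash 0 = \#\fslash R + \#\dash 0$; on the right the SE and NE counts balance by the very definition of the bottom-boundary extras. I would then construct the matching by a stack-based sweep down $\gamma$: each emitted ray is pushed onto a stack keyed by its alignment, and popped against the most recent unmatched ray of the opposite alignment whose column line meets it at a horizontal-edge midpoint in the interior. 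The labeling rules governing the relative order of $\bslash 0, \dash 0, \fslash R,$ and the kink labels are exactly what make this greedy procedure consume all the rays, producing a planar non-crossing matching with pink dots at horizontal-edge midpoints.

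Uniqueness is then immediate: the ``no other intersections of pink rays'' clause forces a non-crossing matching, and a non-crossing bipartite matching on an ordered boundary is uniquely determined. That no two pink dots share a NE/SW or NW/SE column follows because each pink dot uses one ray on each of its column lines, each ray belongs to at most one pair, and different originating edges on $\gamma$ lie in different columns of the relevant alignment. The $\bslash K$ case is treated as a perturbation of the $\bslash R$ case: the condition that, after $\bslash K$, the next $\fslash 1$ must be reached before any $\fslash R$ or $\dash 0$ ensures that in the sweep the SW ray from the kink and the NW ray from the next $\fslash 1$ no longer pair with each other but with other partners, while their paths cross each other at a vertex of the triangular grid rather than at a horizontal edge. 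I expect the main obstacle will be this last verification: a careful parity-and-position analysis near the kink to confirm that the displaced crossing lands on a vertex (so gets no pink dot), that both displaced rays find valid new partners, and that no other pair is affected.
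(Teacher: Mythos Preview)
Your overall architecture matches the paper's: split the triangle along $\gamma$, balance the ray counts on each side using the labeling identities, and observe that ``no extra crossings'' forces the unique non-crossing pairing, which you implement as a stack sweep and the paper phrases as ``match the SW rays with the NW rays in order.'' The paper does isolate the kink ray first and pair it by hand before doing the in-order matching of the remainder, but that is a packaging difference, not a mathematical one.

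There is, however, a genuine error in your treatment of the $\bslash K$ case. You assert that the SW ray from the $\bslash K$ and the NW ray from the next $\fslash 1$ ``cross each other at a vertex of the triangular grid rather than at a horizontal edge,'' and you plan a parity argument to confirm this. That is geometrically false. Rays emanate from edge midpoints, and a SW ray from the midpoint of a $\bslash$ edge meets a NW ray from the midpoint of any $\fslash$ edge directly below it at a horizontal-edge midpoint, never at a lattice vertex; you can check this with the same coordinate computation you would use for the $\bslash R$ case. The theorem does not claim this crossing is avoided by geometry; it explicitly \emph{permits} this one crossing without a pink dot. The point is that, by the last labeling rule for $\bslash K$, after the first $\fslash 1$ there must come an $\fslash R$ or $\dash 0$ (before any $\dash 1$), supplying a second NW ray; the SW ray from the kink is declared to pass through the first NW ray and pair with the second. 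So your ``perturbation of the $\bslash R$ case'' intuition is right in outcome but wrong in mechanism, and the parity analysis you anticipate as the main obstacle would not succeed.

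A smaller gap: your column-uniqueness argument (``different originating edges on $\gamma$ lie in different columns'') handles dots on the same side of $\gamma$, but you should say a word about why a left-side dot and a right-side dot cannot share a NW/SE column. The paper's observation is that if they did, the NW ray to the left dot and the SE ray to the right dot would issue from the same edge of $\gamma$, which is impossible.
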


\begin{proof}
  Throughout this proof we use the conditions on the labeling of $\gamma$,
  generally without comment.

  We worry first about matching the ray out of the kink (if any).
  \begin{itemize}
  \item If the kink is a $\bslash 0$ or $\bslash R$, 
    it has a pink ray going SW. Further South along $\gamma$, 
    there must be a positive number of pink rays going NW 
    (either from the first $\fslash 1$ after the $\bslash R$, 
    or from an $\fslash R$ or $\dash 0$). The first such NW ray 
    below the kink {\em must} meet the SW ray out of the kink -- 
    there are no other rays beforehand for the SW ray to collide with.
    So declare those two rays matched up, place a pink dot where they cross,
    and extend them no further.
  \item If the kink is a $\bslash K$, then further South along $\gamma$, 
    there must be at least two pink rays going NW (by the last condition
    on puzzle paths).
    The SW ray from the $\bslash K$ is required to cross through 
    the first NW ray, and as above must be matched up with the second NW ray.
  \item If the kink is a $\bslash 1$, and there is no $\fslash 0$ above it,
    then there is no pink ray out of the $\bslash 1$ to consider.
  \item If the kink is a $\bslash 1$, and there is a $\fslash 0$ above it,
    then there is a pink ray NE out of the $\bslash 1$
    which must be matched up with the SE ray out of the $\fslash 0$ 
    most closely above it.
  \end{itemize}

  Now we match up the remaining pink rays on the left half of the puzzle
  triangle (as cleft by $\gamma$). The SW rays come from $\bslash 0$s
  on the NE side of the puzzle triangle, and the NW ones come from $\fslash R$s
  along $\gamma$ and $\dash 0$s on the South side of the puzzle triangle. 
  By assumption on $\gamma$, there are the same number of these rays
  (which involves a small case check over the possibilities for the kink).
  To avoid creating crossings, the SW rays must be matched with the NW rays
  in order, giving the uniqueness. 
  To ensure that the $k$th SW ray crosses the $k$th NW ray at all
  (each extended infinitely), we use the first condition on $\gamma$.

  Wholly independently,
  we match up the remaining pink rays on the right half of the puzzle
  triangle. Here the number of NE rays from the bottom edge of the puzzle 
  triangle ({\em not} on $\gamma$) was chosen to match the number of SE rays 
  from $\fslash 0$s along $\gamma$. Again, to avoid creating crossings, 
  the SW rays must be matched with the NW rays in order, giving the uniqueness.

  We must check that no two pink dots are in the same NE/SW or NW/SE column.
  Group the dots into Left, Kink, and Right according to their NE/SW column.
  Obviously two pink dots in different groups cannot be 
  in the same NE/SW column,
  and it is easy to see also that two Left pink dots cannot be in
  the same NE/SW or NW/SE column, nor can two Right pink dots.
  There is at most one pink dot in the Kink group.
  
  It remains to show that no two pink dots in different groups can be
  in the same NW/SE column. Drop the Kink dot (if any) into the
  Left group or Right group depending on which side of $\gamma$ it lies on.
  If a Left dot were NW of a Right dot (SE being obviously impossible), 
  the NW ray pointing to the Left dot 
  would emanate from the same $\gamma$-edge
  as the SE pointing to the Right dot, a contradiction. 
\end{proof}

\subsection{The Vakil variety}\label{ssec:vakilvariety}

Let $\barPi_\gamma \subseteq M_{k\times n}$
denote the interval rank variety and 
$\Pi_\gamma \subseteq \Grkn$ its associated Vakil subvariety of the 
Grassmannian, using the rank conditions from the $n-k$ pink dots placed
according to theorem \ref{thm:pinkdots}. 
It is easy to carry over the definition of ``essential set'' 
from proposition \ref{prop:essset} to puzzle paths and their pink dots:
cross out all NW/SE columns and NE/SW columns with no pink dots, and
strictly SW,SE of each pink dot. Then the essential rank conditions
correspond to the locally Northernmost horizontal edges remaining,
which we will call \defn{essential edges}.

\begin{Proposition}\label{prop:gammacodim}
  If $\gamma$ has no kink, it is easy to compute the codimension
  of $X_\gamma$ inside its Richardson envelope: it is the number of
  pairs ``$\fslash R$ above $\fslash 0$'' occurring along $\gamma$.

  If $\gamma$ has a kink, we must add correction terms:
  \begin{itemize}
  \item If the kink is $\bslash 1$, add the number of $\fslash R$s above the
    last $\fslash 0$ above the kink, and if there is a $\fslash 0$
    above the $\bslash 1$, add also the number of $\fslash 0$s
    below the $\bslash 1$.
  \item If the kink is $\bslash 0$, \\
    add the number of $\fslash R$s above the kink, 
    plus the number of $\fslash 0$s 
    below the first $\fslash R$ below the $\bslash 0$.
  \item If the kink is $\bslash R$, \\
    add the number of $\fslash R$s above the kink, 
    plus the number of $\fslash 0$s 
    below the first $\fslash 1$ below the $\bslash R$.
  \item If the kink is $\bslash K$, \\
    add the number of $\fslash R$s above the kink, 
    plus the number of $\fslash 0$s 
    below the first $\fslash 1$ below the $\bslash K$.
  \end{itemize}
\end{Proposition}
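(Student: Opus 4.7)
The plan is to deduce this from Proposition \ref{prop:envelope}: the codimension of $\Pi_\gamma$ inside its Richardson envelope equals the number of NE/SW pairs of $1$s in $J(r)$, which are exactly the pink dots constructed in Theorem \ref{thm:pinkdots}. Concretely, I must count pairs of pink dots whose matrix positions $(i_1,j_1), (i_2,j_2)$ satisfy strict nesting $[i_2,j_2]\subsetneq[i_1,j_1]$. The key observation that simplifies the count is that the $(i,j)$ of a dot is fully determined by its two ray-sources: a left-side dot sits at $(i,j)=(i(\text{SW source}),j(\text{NW source}))$ and a right-side dot at $(i,j)=(i(\text{NE source}),j(\text{SE source}))$.

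First I would show that two pink dots on the \emph{same} side of $\gamma$ cannot form a strictly nested pair. This uses the unique non-crossing matching guaranteed by Theorem \ref{thm:pinkdots}: on the left, ordering SW sources top-to-bottom along the NE side and NW sources in the order they appear as one descends $\gamma$ forces the matching to produce dots whose intervals \emph{interleave} (matching SW/NW sources produces noncrossing arcs, and noncrossing arcs are precisely non-nested on matrix positions). The same holds on the right. Consequently every strictly nested pair is a \emph{cross-side} pair, with one left-side dot and one right-side dot.

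I would then handle the base (no-kink) case. In this case left dots come from $\fslash R$ and $\dash 0$ edges (as NW sources) together with $\bslash 0$ edges (as SW sources), while right dots come from $\fslash 0$ edges (as SE sources). A cross-side pair $(D_\ell,D_r)$ is strictly nested iff $j(D_\ell)>j(D_r)$ and $i(D_\ell)<i(D_r)$. Since $j$ decreases monotonically as we descend $\gamma$, and since a $\dash 0$'s NW source is always below every $\fslash 0$'s SE source in $j$, nested pairs can only involve a $\fslash R$ above (in $j$) a $\fslash 0$, and the $i$-inequality then follows automatically by tracing the in-order matchings on both sides. This yields exactly the count ``$\#$ of $\fslash R$ above $\fslash 0$ on $\gamma$''.

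Finally I would analyze the four kink-label cases separately, each contributing additional nested pairs on top of the base count. In each case the kink introduces one (or two) new rays, possibly creating a new left- or right-side dot, and shifts the non-crossing matching of the remaining sources; the extra nested pairs count the ways this new dot is strictly nested with pre-existing ones. For $\bslash 1$ the NE kink ray creates a new right dot just below the highest $\fslash 0$ above it, producing a nested pair with every $\fslash R$ above that $\fslash 0$, and if there is also a $\fslash 0$ above the kink the re-matching on the left produces nested pairs with $\fslash 0$s below. For $\bslash 0$ the kink's SW ray creates a new left dot which nests with $\fslash R$s above and with $\fslash 0$s below the first $\fslash R$ below the kink. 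For $\bslash R$ and $\bslash K$ the kink SW ray and its NW partner at the first $\fslash 1$ below (with or without producing an honest dot, respectively) shift the matching analogously. The main obstacle is carefully tracking, in each kink case, exactly how the non-crossing matching from Theorem \ref{thm:pinkdots} is altered by the presence of the kink ray; the $\bslash K$ case is the most delicate because the crossing near the $\bslash K$ is a \emph{non-dot} crossing that nevertheless reorganizes which SW source is matched to which NW source. Once this bookkeeping is done, the sum of the base count and the correction terms matches the asserted formula exactly.
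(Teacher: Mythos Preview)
Your approach is essentially the paper's: invoke Proposition~\ref{prop:envelope} to reduce to counting NE/SW pairs of pink dots, use the noncrossing property of Theorem~\ref{thm:pinkdots} to rule out contributions from pairs on the same side of $\gamma$, and then do a case analysis on the kink. The paper organizes the kink case by splitting the dots into three classes by NE/SW column (left of the kink column, the kink column itself, right of it) rather than by kink label, but this is only a difference in bookkeeping; the paper's own proof is extremely terse here (``case-by-case analysis comparing groups 1 and 2, 1 and 3, 2 and 3 gives the rest'') and you have simply filled in more of it. One small slip: in the $\bslash 1$ case there is no ``re-matching on the left'' --- the $\bslash 1$ kink only contributes a NE ray on the right side, so the second correction term arises from the kink-column dot being nested with the right-side dots coming from $\fslash 0$s below the kink, not from any change to the left-side matching.
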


\begin{proof}
  First consider the case of no kink.
  The noncrossing condition on the pink rays implies that two pink dots
  both on the left, or both on the right, of $\gamma$ will not contribute 
  to the codimension (as computed in proposition \ref{prop:envelope}).
  A pink dot on the left is NW of a $\fslash R$, and a pink dot on
  the right is SE of a $\fslash 0$; such a pair only contributes if the
  $\fslash R$ occurs above the $\fslash 0$.

  If there is a kink, split the pink dots into three classes: 
  \begin{enumerate}
  \item those in the NE/SW columns to the left of $\gamma$,
  \item the at most one pink dot in the NE/SW column of the kink, and
  \item those in the NE/SW columns to the right of $\gamma$.
  \end{enumerate}
  Again, there can be no contribution from pairs of pink dots in the
  same class. Case-by-case analysis comparing groups 1 and 2, 1 and 3, 2 and 3
  gives the rest.
\end{proof}

\begin{Proposition}\label{prop:initialpath}
  Let $\gamma$ be the initial path, with a labeling, $\mu$ on the NE side
  of the puzzle triangle and $\nu$ along the bottom edge (both read
  left to right). Then $X_\gamma$ is the Richardson variety $X^\nu_\mu$.
\end{Proposition}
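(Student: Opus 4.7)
The plan is to apply the machinery built up in Sections 2 and 3 (specifically the pink-dot construction of Theorem 3.2, together with Propositions 3.5 and 3.6) directly to the initial path, reducing everything to a matching computation and a codimension count.

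First I would analyze the pink rays attached to $\gamma$. Since $\gamma$ is initial, it contains only $\bslash 0$, $\bslash 1$ (on the NE side) and $\dash 0$, $\dash 1$ (on the bottom); in particular there are no $\fslash R$, $\fslash 0$, or $K$ labels, and we may take $\gamma$ to have no kink so all kink-related rays are absent. The only rays that appear are therefore SW rays from each $\bslash 0$ on the NE side and NW rays from each $\dash 0$ along the bottom. By the labeling conditions listed at the start of Section 3, these two sets of rays have the same cardinality, namely $n-k$.

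Next I would invoke Theorem \ref{thm:pinkdots} to match up the rays without crossings. The noncrossing condition forces order-preserving pairing: if $i_1<\cdots<i_{n-k}$ are the positions of $0$ in $\mu$ read along $\gamma$, and $j_1<\cdots<j_{n-k}$ are the positions of $0$ in $\nu$, then for each $a$ the $a$-th SW ray (from NE edge at NE/SW column $i_a$) meets the $a$-th NW ray (from bottom edge at NW/SE column $j_a$) at the unique horizontal edge in position $(i_a,j_a)$. Thus the associated partial permutation $J(\gamma)$ has its $n-k$ ones at the positions $(i_a,j_a)$.

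Now I would read off the Richardson envelope using Proposition \ref{prop:envelope}. The rows of $J(\gamma)$ containing a $1$ are precisely $\{i_a\}$, the $0$-positions of $\mu$, so the empty rows mark the $1$-positions of $\mu$; similarly the empty columns mark the $1$-positions of $\nu$. Hence $\mu(\Pi_\gamma)$ and $\lambda(\Pi_\gamma)$ are (the appropriate assignment of) $\mu$ and $\nu$, and the Richardson envelope of $\Pi_\gamma$ is exactly the Richardson variety named in the statement. For the codimension inside this envelope I would use Proposition \ref{prop:gammacodim}: $\gamma$ has no $\fslash R$ edges and (with no kink) no correction term, so the codimension is $0$.

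Finally, since by Theorem \ref{thm:rankmatrices} the interval rank variety $\barPi_\gamma$ is irreducible (and nonempty, as the labeling was assumed valid), $\Pi_\gamma$ is an irreducible subvariety of its Richardson envelope of the same dimension, hence equal to it. The only potential obstacle is a bookkeeping one — making sure the ordering conventions for reading $\mu$ along NE and $\nu$ along the bottom are consistent with the NE/SW and NW/SE column indexing of Section \ref{ssec:puzzleintro} — but once the pink-dot positions $(i_a,j_a)$ are correctly identified, Propositions \ref{prop:envelope} and \ref{prop:gammacodim} do all the work and no further computation is needed.
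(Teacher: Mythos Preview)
Your proposal is correct and follows essentially the same approach as the paper's proof: use Proposition~\ref{prop:gammacodim} to see that $\Pi_\gamma$ has codimension~$0$ in its Richardson envelope, then identify that envelope via the pink-dot positions (each pink dot sits SW of a $\bslash 0$ in $\mu$ and NW of a $\dash 0$ in $\nu$) and Proposition~\ref{prop:envelope}. Your version is simply more explicit about the order-preserving matching of rays and the resulting $J(\gamma)$, and it spells out the irreducibility argument that the paper leaves implicit.
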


\begin{proof}
  Proposition \ref{prop:gammacodim} easily gives that $X_\gamma$
  is codimension $0$ in its Richardson envelope, so we merely have
  to determine that envelope.

  Each pink dot is NW of a $\dash 0$ and SW of a $\bslash 0$.
  With this, we can determine $(r_{1j})$ and $(r_{in})$, 
  hence the Richardson envelope $X^\nu_\mu$.
\end{proof}

\begin{Proposition}\label{prop:finalpath}
  Let $\gamma$ be the final path, with a labeling $\lambda$ on the NW side
  of the puzzle triangle (read left to right). 
  Then $X_\gamma$ is the opposite Schubert variety $X^\lambda$.
\end{Proposition}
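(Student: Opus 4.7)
My plan is to mirror the proof of Proposition~\ref{prop:initialpath}: apply Proposition~\ref{prop:gammacodim} to reduce $\Pi_\gamma$ to its Richardson envelope, and then use Proposition~\ref{prop:envelope} to identify that envelope as $X^\lambda$.

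The first step is immediate. The final path $\gamma$ runs along the NW side of the triangle, so it carries no kink and no $R$ or $K$ labels; consequently no pair ``$\fslash R$ above $\fslash 0$'' occurs along $\gamma$, and none of the kink correction terms of Proposition~\ref{prop:gammacodim} apply. That proposition therefore yields $\codim\bigl(\Pi_\gamma \subseteq X_{\lambda(\Pi_\gamma)}^{\mu(\Pi_\gamma)}\bigr) = 0$, reducing the task to computing $\lambda(\Pi_\gamma)$ and $\mu(\Pi_\gamma)$.

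For this I would enumerate the pink rays produced by $\gamma$: on the left of $\gamma$ there are none, while on the right each of the $n-k$ labels $\fslash 0$ along the NW edge emits a SE-pointing ray, balanced by $n-k$ NE-pointing rays placed on the $n-k$ leftmost bottom edges (the natural interpretation of the anchoring rule of \S\ref{ssec:puzzleintro} when $\gamma$ meets the bottom only at the SW corner). A direct coordinate check shows that the SE ray from the $a$-th position of $\lambda$ (reading left-to-right, so from the SW corner up to the apex) lies in the NW/SE column of index $a$, while the NE ray from bottom edge $s$ lies in the NE/SW column of index $s$. Since the SE ray from $a_s$ and the NE ray from bottom edge $t$ can meet inside the triangle only when $t \leq a_s$, the uniqueness clause of Theorem~\ref{thm:pinkdots} forces the sorted pairing $s\leftrightarrow s$, producing pink dots at $(i(e),j(e)) = (s, a_s)$ where $a_1<\cdots<a_{n-k}$ enumerate the positions of the $0$s in $\lambda$.

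Reading $J(r)$ off from these positions, the nonempty rows are exactly $\{1,\ldots,n-k\}$ and the nonempty columns are exactly $\{a : \lambda_a = 0\}$. Proposition~\ref{prop:envelope} then gives $\lambda(\Pi_\gamma) = 0^{n-k}1^k$ (so that $X_{\lambda(\Pi_\gamma)} = \Grkn$) and $\mu(\Pi_\gamma) = \lambda$, so the Richardson envelope collapses to $X^\lambda$ as claimed. The step I expect to require the most care is pinning down the placement of the auxiliary NE rays (since the anchoring rule of \S\ref{ssec:puzzleintro} is phrased for paths meeting the bottom in its interior) and then rigorously ruling out the alternative non-crossing pairings; once the crossing inequality above is verified, an easy sweep from the rightmost SE ray inward handles the rest.
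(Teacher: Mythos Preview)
Your argument is correct and follows the same overall shape as the paper's: invoke Proposition~\ref{prop:gammacodim} to see that $\Pi_\gamma$ equals its Richardson envelope, then identify that envelope. The difference lies in how the envelope is pinned down. The paper's primary argument sidesteps any fresh analysis of the pink-ray rules for the final path: it simply observes that the pink dots of $\gamma$ coincide with those of the initial path $\gamma'$ having $0^{n-k}1^k$ on the NE side and $\lambda$ on the S side, then quotes Proposition~\ref{prop:initialpath}. (It also mentions the alternative of reading off the essential conditions via Proposition~\ref{prop:essset}.)

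Your route instead computes the pink dots for $\gamma$ from scratch and then feeds the resulting $J(r)$ into Proposition~\ref{prop:envelope}. This is perfectly valid, and your placement of the dots at $(s,a_s)$ agrees with what the paper's comparison to $\gamma'$ yields. The trade-off is exactly the one you flag: you must interpret the NE-ray anchoring rule in the degenerate situation where $\gamma$ meets the bottom only at the SW corner, and then argue the non-crossing pairing directly. The paper's comparison trick buys you freedom from both of those checks, since the initial path $\gamma'$ is nondegenerate and its pink dots are already determined. Either way the computation is short; the paper's version is just a bit slicker in avoiding the edge case.
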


\begin{proof}
  Again, proposition \ref{prop:gammacodim} gives that $X_\gamma$
  is codimension $0$ in its Richardson envelope.
  Let $\gamma'$ be the initial path with labels $0^{n-k} 1^k$ on the NE side
  and $\lambda$ on the S side. 
  It is easy to check that the pink dots for $\gamma$ are in the same locations
  as for $\gamma'$. Now apply proposition \ref{prop:initialpath}.

  Alternately, apply proposition \ref{prop:essset} to see that the
  only essential rank conditions are from $(r_{1j})$, and check that
  those define $X^\lambda$.
\end{proof}

The next proposition is crucial to the puzzle combinatorics: it will say 
that a Vakil variety associated to a puzzle path is $\shift$-invariant 
unless there are multiple ways to fill in the next puzzle piece.

\begin{Proposition}
  Let $\gamma$ be a non-final puzzle path, 
  where the next rhombus to be filled has NE/SW column $i$, NW/SE column $j$. 
  The essential edges occurring on the right-hand side of $\gamma$ only occur in
  the $i$th or $(i+1)$st NE/SW column. 

  If the kink is $\bslash 1$ and the next edge is $\fslash 0$,
  there is an essential edge $e$ at $i(e) = i+1$, $j(e) \geq j$. 
  Otherwise no essential edges $e$ have $i \notin [i(e),j(e)] \ni j$.
\end{Proposition}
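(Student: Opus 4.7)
The plan is to trace the pink rays producing pink dots on the right-hand side of $\gamma$, then apply the essential-edge criterion (``locally Northernmost in the strict S/W diagram'') to pin down which of these dots survive as essential edges.

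First I would describe $\gamma$'s shape in terms of the filling state. Because $(i,j)$ is the next rhombus in the degeneration order, every rhombus in NE/SW columns $>i$ has been filled, and in column $i$ the rhombi $(i,n),(i,n-1),\ldots,(i,j+1)$ have been filled. Hence $\gamma$ runs along the boundary of this filled region; in particular, the rightmost vertex of $\gamma$ on the bottom of the puzzle lies at the boundary between NE/SW columns $i$ and $i+1$, so the NE-rays placed immediately to the right of $\gamma$'s bottom portion all lie in NE/SW columns $\geq i+1$. In the special situation that the kink is $\bslash 1$ with a $\fslash 0$ above it, the kink itself contributes an additional NE-ray, living in NE/SW column $i+1$.

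Next I would trace the pink dots on the right. The SE-rays issue from the $\fslash 0$ labels on $\gamma$'s SW-going segments, each lying in the NW/SE column of its source edge. By the noncrossing matching of Theorem~\ref{thm:pinkdots}, each pink dot on the right of $\gamma$ sits at a horizontal edge $e$ whose $i(e)$ equals the NE/SW column of its matched NE-ray and whose $j(e)$ equals the NW/SE column of its matched SE-ray; in particular every right-side dot satisfies $i(e)\geq i+1$. The noncrossing rule then forces the right-side dots to descend SE as $i(e)$ grows, yielding a staircase arrangement.

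Then I would apply the essential criterion. The staircase structure implies that of these right-side pink dots, only the westernmost (smallest $i(e)$) survive as locally Northernmost in the S/W diagram, giving $i(e)=i+1$; any essential dot at $i(e)=i$ arises only from a ray of the kink itself crossing into column $i$. For the $j(e)$-claim: in the non-special case, the westernmost SE-ray on the right of $\gamma$ emanates from a $\fslash 0$ in NW/SE column $<j$ (since $\gamma$'s SW portion has already passed NW/SE column $j$ to the right of the unfilled rhombus $(i,j)$), so the corresponding essential edge has $j(e)<j$; whereas in the special case of a $\bslash 1$ kink above a $\fslash 0$, the extra NE-ray from the kink is matched with the $\fslash 0$ immediately above the kink, whose NW/SE column is $\geq j$, yielding an essential edge at $(i(e),j(e))=(i+1,j')$ with $j'\geq j$, which one then checks is locally Northernmost because no pink dot lies strictly NE of it on the right of $\gamma$.

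The hard part will be the explicit case analysis on the kink's label and on the positions of the nearest $\fslash 0$s above and below it, needed to verify in each case which pink dots are locally Northernmost. The noncrossing matching from Theorem~\ref{thm:pinkdots} is what reduces this to a local combinatorial check, column by column.
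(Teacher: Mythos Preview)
Your overall strategy—locate the right-side pink dots explicitly and read off the essential edges from the noncrossing matching—is close to the paper's, which instead argues locally that any uncrossed edge in a NE/SW column $k>i+1$ has an uncrossed NW neighbor (hence is not essential).  But your execution contains a concrete error that derails the special-case analysis.

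The NE-pointing ray from a $\bslash 1$ kink lives in NE/SW column $i$, not $i+1$.  The kink is the upper-right side of the rhombus at $(i,j)$; a ray going NE from its midpoint passes through the midpoint of the horizontal edge $(i,j)$ and stays in NE/SW column $i$.  (This is exactly why the paper can speak of ``the kink column'' when discussing whether $(i,j(e))$ is crossed out.)  Consequently the dot produced by the kink's NE ray meeting the nearest $\fslash 0$ \emph{above} sits at some $(i,j^*)$ with $j^*>j$, not at $(i+1,j^*)$.  So this dot cannot be the essential edge the proposition singles out.

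Relatedly, you have conflated two different hypotheses: ``there is a $\fslash 0$ somewhere \emph{above} the kink'' (the condition for the kink to emit a NE ray) versus ``the \emph{next} edge below the kink is $\fslash 0$'' (the proposition's special case).  The essential edge at $(i+1,j')$ with $j'\ge j$ arises from the SE ray of that next-edge $\fslash 0$ (in NW/SE column $j$) meeting a bottom NE ray in column $i+1$, not from the kink's ray.  The paper's argument then uses the kink-column dot at $(i,j^*)$ to rule out any \emph{other} essential edge at $(i+1,j')$ with $j'>j$: such an edge would require its NW neighbor $(i,j')$ to be crossed out, forcing $j^*>j'$, contradicting that $j^*$ is the column of the \emph{first} $\fslash 0$ above the kink.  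Your staircase observation alone does not supply this step, since whether $(i+1,j')$ is essential depends on the status of $(i,j')$, hence on the kink-column dot you have placed in the wrong column.
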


\begin{proof}
  If an edge $e$ in the $k$th NE/SW column, $k>i+1$, is not crossed out 
  (as described at the beginning of \S \ref{ssec:vakilvariety}),
  we claim the horizontal edge $e'$ just NW of $e$ is also not crossed out.
  Proof: since $e$ is not crossed out from the NW, $e'$ is also
  not crossed out from the NW. Since $e$ is not crossed out from the NE, 
  there is a pink dot in its NE/SW column, weakly SW of it. 
  By the way we placed pink dots on the right side of $\gamma$, 
  there is also a pink dot strictly SW of $e'$. 
  Hence $e'$ is also not crossed out from the NW, so not at all.
  Since $e'$ is not crossed out, $e$ is not essential.

  Now consider essential edges $e$ with $i < i(e) \leq j \leq j(e)$.
%
%
  By the previous paragraph, $i(e)=i+1$. Since the horizontal edge $e'$ 
  just NW of $e$ is crossed out, necessarily from its NE, either there is 
  no pink dot in the kink column or the only pink dot is strictly NE of $e'$.
  Either way, the kink must be $\bslash 1$, and every edge SW of the
  kink is crossed out.

  For $e$ to not be crossed out, there must be a pink dot weakly SE of it,
  so there must be a $\fslash 0$ NW of it. If $i(e)=j$, that $\fslash 0$
  is the next edge below the kink $\bslash 1$, which was the possibility
  singled out. In this case the horizontal edge just SE of that $\fslash 0$,
  at $(i+1,j)$, is not crossed out, but the edge just NW of it, at $(i,j)$, is.
  Hence some edge weakly NE of $(i+1,j)$ is essential.

  The remaining case is $j>i(e)$. 
  Then we have located some $\fslash 0$ above the $\bslash 1$, so there is
  a pink ray NW from the $\bslash 1$ meeting the first $\fslash 0$ above it.
  (In particular, there is a pink dot in the kink column.)
  But for $e$ to be essential, the horizontal edge just NW of $e$
  must be crossed out by a pink dot strictly to its NE, which doesn't
  fit with that dot being SE of the {\em first} $\fslash 0$ above
  the $\bslash 1$. So $e$ cannot be essential.
\end{proof}

\section{A detailed example: 
  $\mu=0101$, $\nu=1010$}\label{sec:ex}

We follow the Vakil degeneration of the Richardson variety 
$Y_1 := X_{0101}^{1010}$, including the sweeps and the intersections of
components as in theorem \ref{thm:geom}. 
By proposition \ref{prop:initialpath}, $Y_1$ is associated to 
an initial puzzle path labeled with $0101$ on NE, $1010$ on S.
As we will prove in general in \S \ref{sec:filling},
each shift/sweep operation will correspond to moving this path leftward
by adding a rhombus puzzle piece.

Initially, $Y_1$'s essential rank conditions are $r_{12} , r_{34} \leq 1$. 
There is one more ``essential edge'' for the corresponding $J$, but
that rank condition $r_{14} \leq 2$ is the direct sum of these two.
The first shift, $3\to 4$, does nothing to the rank conditions 
or to the placement of the pink dots:

\centerline{\epsfig{file=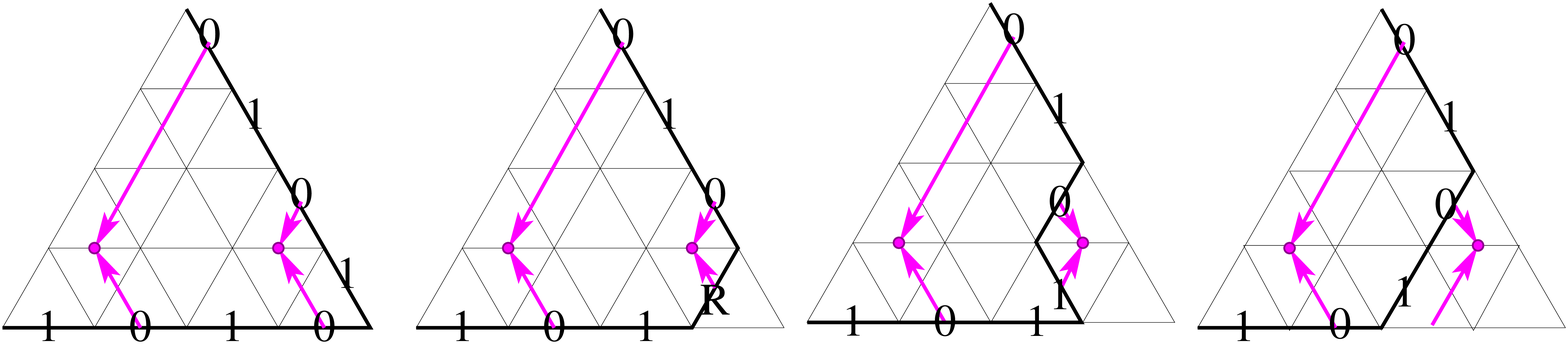,width=6.5in}}

The second shift, $2\to 4$, is nontrivial, but preserves the 
one-dimension-larger interval rank variety with essential rank conditions
$r_{12} , r_{24} \leq 1$. 
So that variety is the sweep $\sweep_{2\to 4} Y_1$; call it $Y_{1T}$.
The shift $2\to 4$ of $r_{12} , r_{34} \leq 1$ is 
$r_{12} , r_{32} \leq 1$, which is reducible; one component 
$Y_2$ is defined by $r_{13} \leq 1$ and the other, 
$Y_3$, by $r_{22} \leq 0$. 
Finally, we need to consider the intersection 
$Y_{23} := Y_2 \cap Y_3$
defined by $r_{13} \leq 1$ {\em and} $r_{22} \leq 0$. 
These Vakil varieties are associated to the following puzzle paths.
(The blue rhombus is there as reminder that we used a sweep, not a shift.)

\centerline{\epsfig{file=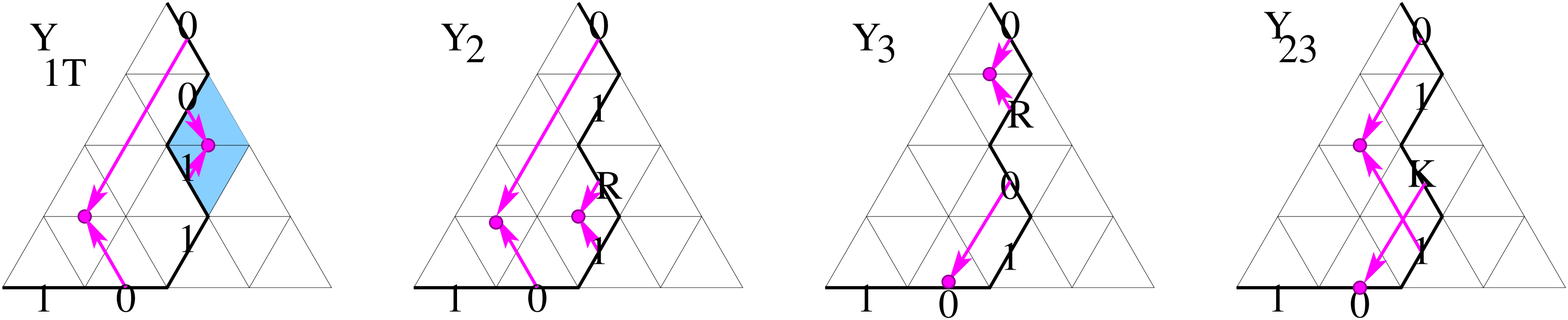,width=6.5in}}

The next three shifts, $2\to 3$, $1\to 4$, $1\to 3$ again do nothing:

\centerline{\epsfig{file=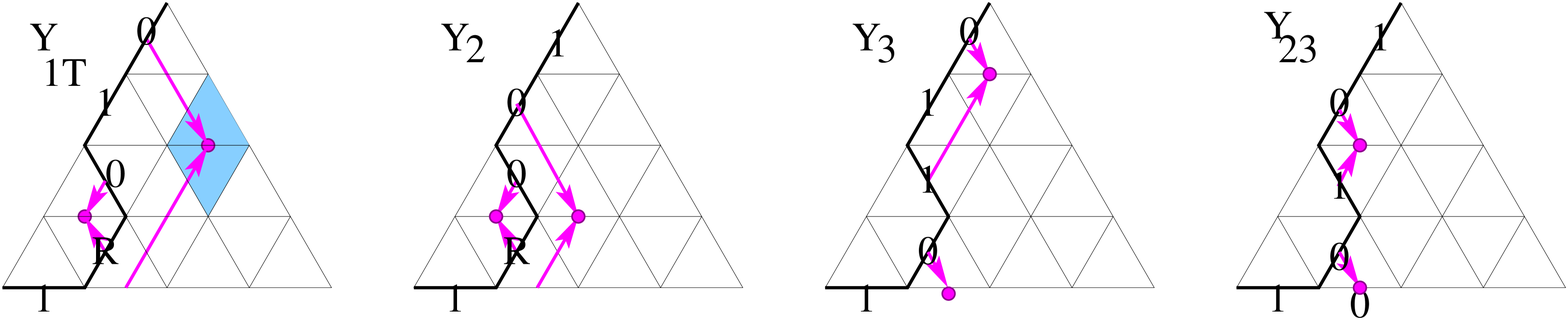,width=6.5in}}

The final shift, $1\to 2$, is only nontrivial on $Y_3$ and $Y_{23}$,
as $Y_{1T}$ and $Y_2$ are already opposite Schubert varieties.
The sweep 
$Y_{3T}$ of $Y_3$ coincides with $Y_{1T}$, and the shift 
$Y_4 := \shift_{1\to 2} Y_3$ is
the opposite Schubert variety defined by $r_{11} \leq 0$. 
The sweep 
$Y_{23T}$ of $Y_{23}$ coincides with $Y_2$,
and the shift 
$Y_5$ of $Y_{23}$ is the opposite Schubert variety
defined by $r_{11}\leq 0, r_{13}\leq 1$.

\begin{figure}[htbp]
  \centering
  \epsfig{file=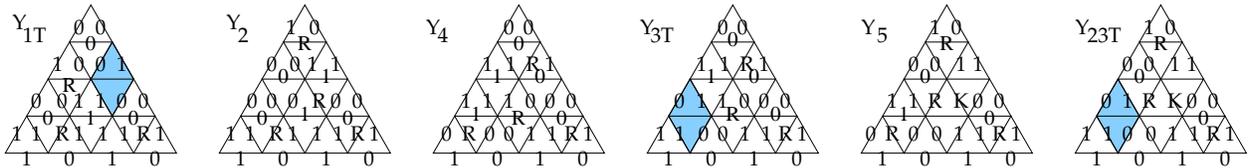,width=6.5in}
  \caption{The puzzles with $\mu=0101$, $\nu=1010$.}
  \label{fig:finalpuzex}
\end{figure}

\junk{
\begin{align}
  a_{11}& =b_{11}&  a_{12}& =b_{12}\\
  a_{21}& =b_{21}&  a_{22}& =b_{22}+c_{22}
\end{align}
}

Applying theorems \ref{thm:hpuz}-\ref{thm:ktpuz}, we have
\begin{align}
  [X_{0101}^{1010}] &= [X^{0110}] + [X^{1001}] & Y_2, Y_4 
  \tag{$H^*$}  \\
  [X_{0101}^{1010}] &= [X^{0110}] + [X^{1001}] + (y_4-y_1) [X^{1010}] 
  & Y_2, Y_4, Y_{1T}, Y_{3T} 
  \tag{$H_T^*$} \\
  [X_{0101}^{1010}] &= [X^{0110}] + [X^{1001}] - [X^{0101}]& Y_2, Y_4, Y_5
  \tag{$K$} \\
  [X_{0101}^{1010}] 
  &= \exp(y_2-y_4)\big(1 - (1-\exp(y_1-y_2)) \big) [X^{1001}] &  
  \tag{$K_T$} \\
  &+ \exp(y_2-y_4) [X^{0110}]  & \notag\\
  &+ ((1 - \exp(y_2-y_4)) - \exp(y_2-y_4)(1 - \exp(y_1-y_2)) [X^{1010}] &
  \notag \\
  &- \exp(y_2-y_4) \exp(y_1-y_2) [X^{0101}]  &\text{all terms.} \notag 
  \junk{\\
  &= \exp(y_1-y_4) [X^{1001}]  
  + \exp(y_2-y_4) ([X^{0110}] -[X^{0101}]) & \notag\\
  &+ (1 - 2\exp(y_2-y_4) + \exp(y_1-y_4) ) [X^{1010}] &
  \notag}
\end{align}

\section{Adding  a rhombus to a puzzle path}
\label{sec:filling}

The Fizzbinesque \cite{StarTrek} rules for pink rays presented in 
\S \ref{sec:variety} will be seen to interface very well with the puzzle pieces.
We deal with the easy cases first:

\begin{Lemma}\label{lem:not1then0}
  Let $\gamma$ be a non-final puzzle path,
  and call its last SE step the kink.
  (So even e.g. initial paths get an honorary kink.)
  \begin{enumerate}
  \item If the kink lies just above the bottom edge (hence the next
    step is West along the bottom), then there is a unique 
    triangle $P$ to add to $\gamma$. The resulting $\gamma'$
    has the same pink dots as $\gamma$.
  \item Otherwise the kink is followed by a step Southwest.
    If the labels on the kink and this Southwest step are {\em not} $1,0$
    respectively, then there is a unique way to add a rhombus
    (possibly consisting of two triangles) to add to $\gamma$. 
    The resulting $\gamma'$ has the same pink dots as $\gamma$.
%
  \end{enumerate}
\end{Lemma}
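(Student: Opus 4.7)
The plan is to prove both parts by case analysis on the label of the kink and, for part (2), on the label of the next SW step. In each case I will check uniqueness of the added piece and invariance of the pink dots side-by-side.

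For part (1), I will first argue that the piece added must be a single triangle (not any rhombus), because the kink ends on the bottom boundary of the puzzle and a vertical rhombus would extend below it. Next, the kink label must lie in $\{0, 1, R\}$: the puzzle-path conditions of \S \ref{sec:variety} prevent $\bslash K$ from being the last SW-bound edge before the boundary, since a $\bslash K$ must be followed by a $\fslash 1$, then an $\fslash R$ or $\dash 0$, then a $\dash 1$. Combined with the bottom-edge label (in $\{0,1\}$), a direct check against the allowed triangle pieces listed in \S \ref{ssec:puzzleintro} forces the label on the remaining NW side of the triangle, and hence $\gamma'$ is unique.

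For part (2), the rhombus to add has the kink as its NE edge and the next SW step as its SE edge, so the two labels known in advance are the kink label $a$ and the SW-step label $b$. Excluding $(a,b)=(1,0)$, I will show that exactly one of the candidate rhombus pieces (two triangles glued as a vertical rhombus, the equivariant rhombus, or one of the three $K$-rhombi) fits. When $a$ or $b$ equals $K$, the non-local conditions on the placement of $K$-labels in a puzzle single out the correct top/middle/bottom variant of the $K$-rhombus.

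Pink-dot preservation in both parts follows from comparing the pink-ray emission rules for $\gamma$ and $\gamma'$ locally around the added piece $P$. Since the only edges whose labels change between $\gamma$ and $\gamma'$ are those of $P$, the only rays that could be affected are those emanating from these edges. A short case-by-case check that the outgoing rays of $\gamma$ match, in direction and origin, the outgoing rays of $\gamma'$ then shows via theorem \ref{thm:pinkdots} that the same matching of rays, and hence the same pink dots, is produced. The main obstacle is not conceptual but organizational: uniqueness and pink-ray invariance must be verified for each allowed label pair, and the $K$-rhombus cases require the most care, since they invoke the non-local puzzle conditions — though those same conditions are what pin down the piece.
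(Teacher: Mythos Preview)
Your proposal is correct and takes essentially the same approach as the paper: both proceed by exhaustive case analysis on the kink label (and, for part (2), the label of the following SW edge), checking uniqueness of the added piece against the available puzzle pieces and then verifying pink-dot invariance locally. The paper's proof simply lists the outcome of each case rather than outlining the strategy as you do.

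One small correction to your description of the pink-dot check: the outgoing rays of $\gamma$ and $\gamma'$ need not match in \emph{origin}. In several cases (e.g.\ $(\bslash 0,\fslash 1)$ or $(\bslash 1,\fslash R)$) the source of a ray shifts by one step along the ray's own direction when passing from $\gamma$ to $\gamma'$, so the half-lines coincide as sets but not as pointed rays; in the cases $(\bslash 0,\fslash R)$ and $(\bslash R,\fslash 1)$ the two rays themselves change but meet at the same pink dot inside the rhombus. What actually needs checking is that the resulting pink dots agree, not that the ray origins do. Also, $b$ cannot equal $K$ (only the kink edge may carry $K$), and the equivariant rhombus is automatically excluded once $(a,b)\neq(1,0)$, so you can drop those from your list of candidates.
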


\begin{proof}
  There is probably no substitute for attempting the case check oneself.
  Nonetheless, we describe the results.
  \begin{itemize}
  \item If the triangle $P$ added has labels $\fslash 1,\bslash 1,\dash 1$, 
    then no pink rays move, much less any pink dots.
  \item If $P$ has labels $\fslash 0,\bslash 0,\dash 0$, 
    then there is a pink dot on its S edge, for both $\gamma$ and $\gamma'$.
  \item If $P$ has labels $\fslash 0,\bslash R,\dash 1$, 
    then there is a pink dot on its S edge, for both $\gamma$ and $\gamma'$.
  \item If $P$ has labels $\fslash R,\bslash 1,\dash 0$, 
    then it has a pink ray going NW out of $\dash 0$ in $\gamma$
    and out of $\fslash R$ in $\gamma'$.
  \end{itemize}

  For rhombi, we list the cases according to the ordered pair (the label on
  the kink, the label on the following step of $\gamma$).  The fact
  that the rhombus is unique is very easy to check.
  \begin{itemize}
  \item [($\bslash 1, \fslash 1$)]. 
    No pink rays involved in these edges at all, 
    either in $\gamma$ or $\gamma'$.
  \item [($\bslash 0, \fslash 1$)].
    In $\gamma$, there is a pink ray from $\bslash 0$, whose source
    moves one step forward in $\gamma'$.
  \item [($\bslash 1, \fslash R$)].
    In $\gamma$, there is a pink ray from $\fslash R$, whose source
    moves one step forward in $\gamma'$.
  \item [($\bslash K, \fslash 0$)].
    In $\gamma$, there is a pink ray from $\bslash K$, whose source
    moves one step forward in $\gamma'$.
  \item [($\bslash 0, \fslash 0$)], ($\bslash R, \fslash 0$), 
    ($\bslash K, \fslash 1$).
    In $\gamma$, there is a pink ray out of each edge, whose source
    moves one step forward in $\gamma'$.
  \item [($\bslash 0, \fslash R$)], ($\bslash R, \fslash 1$).
    In $\gamma$, there are pink rays out of each edge, meeting at
    a pink dot within the rhombus. 
    In $\gamma'$, the same is true, out of $\fslash 0, \bslash 1$.
  \end{itemize}
  It is straightforward to check that each resulting $\gamma'$ satisfies
  the conditions put forth at the beginning of \S \ref{sec:variety}.
\end{proof}

\begin{Lemma}\label{lem:filling}
  Let $\gamma$ be a puzzle path with a $\bslash 1$ kink,
  followed by a SW step $\fslash 0$.
  Then one can add the equivariant piece and obtain a new puzzle path $\gamma'$.

  There are two vertical rhombi made out of triangles with 
  $\bslash 1$, $\fslash 0$ on the right. At least one of those
  two rhombi can be added to $\gamma$ to obtain a new puzzle path.
  Both of those can be added iff the top $K$-piece can be added.
\end{Lemma}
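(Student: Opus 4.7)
The plan is to enumerate each piece that might be added and verify the puzzle-path conditions of Section 3 for the resulting $\gamma'$. The alteration affects only the two right and two left edges at the elbow between the kink $\bslash 1$ and the next step $\fslash 0$; the initial SE segment and the final West segment of the path are untouched, so conditions 1--3 pass automatically, and only the balance condition 4 and the kink-type conditions 5--7 require work.

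First I enumerate. The $\Delta$ triangles with $\bslash 1$ on the right are the all-$1$ triangle and $\fslash R, \bslash 1, \dash 0$; the $\nabla$ triangles with $\fslash 0$ on the right are the all-$0$ triangle and $\dash 1, \bslash R, \fslash 0$. Matching the shared horizontal edge yields exactly two vertical rhombi: Rhombus A, with $\fslash 1, \bslash R$ on the left (new kink $\bslash R$); and Rhombus B, with $\fslash R, \bslash 0$ on the left (new kink $\bslash 0$). In both cases condition 4 is immediate from a count of labels. The equivariant rhombus (having $\fslash 0, \bslash 1$ on its left by design) preserves all label counts, gives a new $\bslash 1$ kink that triggers none of conditions 5--7, and thus always yields a valid $\gamma'$. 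The top-$K$ rhombus places a $\bslash K$ on the lower-left and produces a new $\bslash K$ kink, invoking condition 7.

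The nontrivial checks for A and B reduce to tracking the first edge of $\gamma$ after $\fslash 0$ inside suitable label sets. For A, condition 5 requires the first edge in $\{\fslash 1, \dash 1, \fslash R, \dash 0\}$ to be $\fslash 1$ or $\dash 1$; for B, condition 6 requires the first edge in $\{\fslash R, \dash 0, \dash 1\}$ to be $\fslash R$ or $\dash 0$. A short case analysis on the first edge after $\fslash 0$ belonging to the union $\{\fslash 1, \dash 1, \fslash R, \dash 0\}$ shows that at least one of A, B is valid: if this edge is $\fslash R$ or $\dash 0$ then B works; otherwise (including when no such edge exists) A works.

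Finally, requiring \emph{both} A and B to be valid forces the first edge in $\{\fslash 1, \dash 1, \fslash R, \dash 0\}$ after $\fslash 0$ to be $\fslash 1$, and the first subsequent edge in $\{\fslash R, \dash 0, \dash 1\}$ to be $\fslash R$ or $\dash 0$; this is exactly the condition extracted by combining conditions 5 and 6 applied to a $\bslash K$ kink, which is condition 7 for the top-$K$ piece's new kink. The main obstacle is correctly enumerating the two triangle rhombi and identifying which of conditions 5--7 governs each new kink type; after that the argument collapses to a diagnosis of which label first appears along the rest of the path.
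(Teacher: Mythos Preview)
Your proof is correct and follows essentially the same approach as the paper's own proof: both enumerate the two triangle-rhombi (your Rhombus A and B are the paper's $(1,1,1)$-$\Delta$ atop $(0,1,R)$-$\nabla$ and $(1,0,R)$-$\Delta$ atop $(0,0,0)$-$\nabla$), reduce validity to the kink conditions 5 and 6 respectively, show by a first-label case analysis that these cannot fail simultaneously, and identify their conjunction with condition 7 governing the top $K$-piece. Your presentation is arguably a bit more systematic in explicitly tying each rhombus to its numbered condition, while the paper phrases the same checks as characterizations of when each piece \emph{fails}; the logical content is identical.
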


\begin{proof}
  It is straightforward to check that the equivariant piece can be added,
  i.e. that the resulting $\gamma'$ satisfies 
  the conditions put forth at the beginning of \S \ref{sec:variety}.

  If the $(1,1,1)$-$\Delta$ atop the $(0,1,R)$-$\nabla$ 
  cannot be added to $\gamma$,
  it is because the first $\fslash R$ or $\dash 0$ below the kink 
  is not preceded by any $\fslash 1$.
  In this case, one cannot add the left $K$-piece.

  If the $(0,0,0)$-$\nabla$ below the $(0,1,R)$-$\Delta$ cannot be added, 
  it is because there is no $\fslash R$ below the kink, and
  the first horizontal edge is $\dash 1$, not $\dash 0$.
  In this case also, one cannot add the left $K$-piece.

  These conditions cannot hold simultaneously: the first horizontal
  edge would need to be $\dash 1$, so the first $\fslash R$ below the
  kink would need to come before any $\fslash 1$, but also there
  couldn't {\em be} any $\fslash R$ below the kink, contradiction.

  If neither condition holds, one can indeed add the left $K$-piece.
\end{proof}

Say that \defn{$\gamma$ covers $\gamma'$} if $r(\gamma)$ covers
$r(\gamma')$ in the sense of corollary \ref{cor:coveringrelations},
or equivalently, if $\barPi_{\gamma}$ is a divisor in $\barPi_{\gamma'}$.
Note that the rectangles from corollary \ref{cor:coveringrelations}
are now aligned with the puzzle columns; see the red parallelograms
in figure \ref{fig:10fillex}.

\begin{figure}[htbp]
  \centering
  \epsfig{file=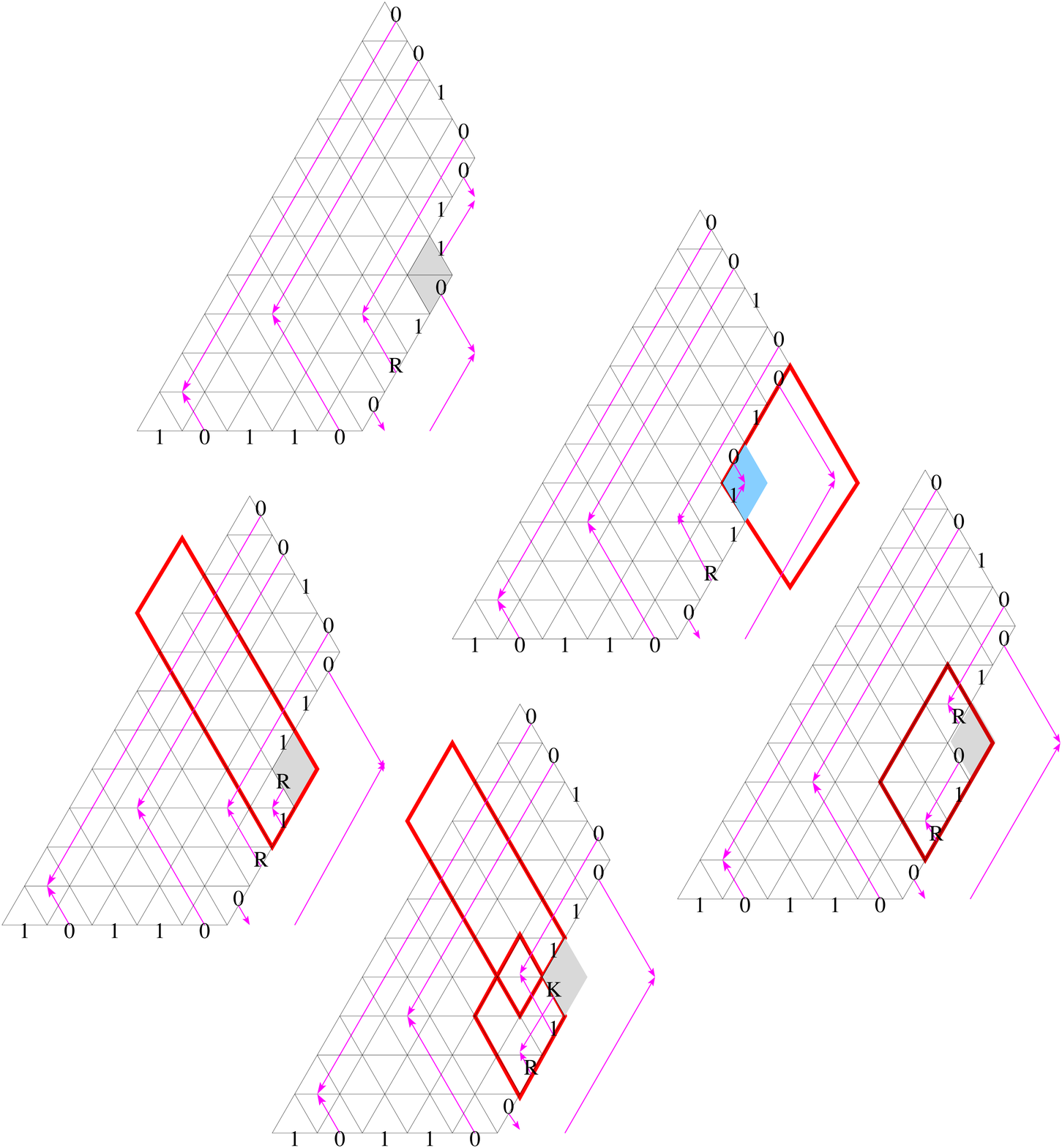,width=5in}
  \caption{A puzzle path (upper left) to which four rhombi may be added 
    (in the shaded area): the equivariant piece (top middle), two possibilities 
    of two triangles (bottom left, bottom right), and the top $K$-piece 
    (bottom middle). The red parallelograms 
    indicate the covering relations, as in corollary
    \ref{cor:coveringrelations}.}
  \label{fig:10fillex}
\end{figure}

\begin{Lemma}\label{lem:Kmeaning}
  Let $\gamma$ be a puzzle path with a $\bslash 1$ kink,
  followed by a SW step $\fslash 0$. 
  Add the equivariant piece to it giving the puzzle path $\sweep\gamma$. 

  If the $(1,0,R)$ -$\Delta$ and $(0,0,0)$ -$\nabla$ pieces
  may be added, call the resulting path $\gamma_0$; it covers $\sweep\gamma$.
  If the $(1,1,1)$ -$\Delta$ and $(1,0,R)$ -$\nabla$ pieces
  may be added, call the resulting path $\gamma_1$; it covers $\sweep\gamma$.

  Assume that one can add the top $K$-piece, producing a puzzle path
  $\gamma_K$. 
  Then $\gamma_K$ covers $\gamma_0$ and $\gamma_1$ 
  (which both exist, by lemma \ref{lem:filling}), and
  $\barPi_{\gamma_K} = \barPi_{\gamma_0} \cap \barPi_{\gamma_1}$.
  Moreover, there exist coordinate subspaces 
  $V_{\lambda_0} \in \barPi_{\gamma_0} \setminus \barPi_{\gamma_1}$,
  $V_{\lambda_1} \in \barPi_{\gamma_1} \setminus \barPi_{\gamma_0}$
  such that $\lambda_0 \not\ni i,j$ and $\lambda_1 \ni j$. 
\end{Lemma}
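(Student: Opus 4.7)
My plan is to reduce the lemma to a finite local analysis of pink dots in a neighborhood of the added rhombus, after which the scheme equality and the existence of $V_{\lambda_0}, V_{\lambda_1}$ follow from the general machinery of Section~\ref{sec:intvpositroid}.

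First, outside a bounded neighborhood of the rhombus under consideration, the edges of the four paths $\sweep\gamma, \gamma_0, \gamma_1, \gamma_K$ and their labels coincide, so by Theorem~\ref{thm:pinkdots} their pink rays and dots coincide there as well. I would then run a case check through Theorem~\ref{thm:pinkdots}, handling the rules governing the kink in each of the four local configurations. The outcome I expect is that $\gamma_0$ and $\gamma_1$ each introduce a single new pink dot at distinct horizontal edges in the NE/SW column of the old kink, while $\gamma_K$ introduces pink dots at \emph{both} of those positions. Translating to the partial permutation matrix $J$, the two new $1$s in $J(\gamma_K)$ sit at the SW and NE corners of a small rectangle whose NW and SE corners are, respectively, the single new $1$s of $J(\gamma_0)$ and $J(\gamma_1)$. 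This is exactly the type-(1) configuration of Corollary~\ref{cor:coveringrelations}, so both covers $\gamma_K \gtrdot \gamma_0$ and $\gamma_K \gtrdot \gamma_1$ are established simultaneously.

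For the scheme equality $\barPi_{\gamma_K} = \barPi_{\gamma_0} \cap \barPi_{\gamma_1}$, the inclusion $\subseteq$ is immediate from the covering relations. For $\supseteq$, I apply Lemma~\ref{lem:intersectIRvars}: the intersection is a reduced union of interval rank varieties $\barPi_\rho$, each satisfying $r(\rho) \geq r(\gamma_0)$ and $r(\rho) \geq r(\gamma_1)$ in the partial order of Corollary~\ref{cor:coveringrelations}. Since $\barPi_{\gamma_0}$ and $\barPi_{\gamma_1}$ are distinct irreducible divisors in $\barPi_{\sweep\gamma}$, every component of their intersection has codimension exactly $2$ in $\barPi_{\sweep\gamma}$, by Krull's height theorem together with the fact that $\barPi_{\gamma_K}$ realizes such a component. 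It remains to rule out other codimension-$2$ components, where the local rectangle analysis is decisive: any common upper bound $\rho \neq \gamma_K$ of $\gamma_0, \gamma_1$ in the order of Corollary~\ref{cor:coveringrelations} must produce further structure beyond the rectangle, forcing $\rho > \gamma_K$ and hence $\barPi_\rho \subsetneq \barPi_{\gamma_K}$, too small to be a component.

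Finally, for the coordinate subspaces, I apply Lemma~\ref{lem:matchings}(4) to each of the covers $\gamma_K \gtrdot \gamma_0$ and $\gamma_K \gtrdot \gamma_1$. In each case, the matching of $J(r(\gamma_0))$ or $J(r(\gamma_1))$ has enough flexibility to be chosen so that specific columns near the rectangle are, or are not, in its image; this produces $\lambda_0$ with $0$s at positions $i$ and $j$, and $\lambda_1$ with a $1$ at position $j$. Part~(4) of Lemma~\ref{lem:matchings} then certifies $V_{\lambda_0} \in \Pi_{\gamma_0} \setminus \Pi_{\gamma_K}$, which combined with the equality $\barPi_{\gamma_K} = \barPi_{\gamma_0} \cap \barPi_{\gamma_1}$ just established places $V_{\lambda_0}$ in $\barPi_{\gamma_0} \setminus \barPi_{\gamma_1}$; symmetric reasoning handles $V_{\lambda_1}$. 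The principal obstacle in the whole argument is the initial pink-dot case check, whose rules near a $\bslash 1$ kink branch in several ways depending on what the rest of $\gamma$ looks like; but once those cases are dispatched, the rectangle picture they produce makes the remaining steps essentially formal.
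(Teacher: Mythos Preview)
Your overall strategy---local pink-dot analysis, then Corollary~\ref{cor:coveringrelations}, then Lemma~\ref{lem:matchings}(4)---matches the paper's, but the picture you expect to emerge from the case check is wrong, and this error propagates.

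You predict that $\gamma_0$ and $\gamma_1$ each ``introduce a single new pink dot'' in the NE/SW column of the old kink, and that $\gamma_K$ has pink dots at \emph{both} of those positions. This cannot happen: by Theorem~\ref{thm:pinkdots} no two pink dots of any puzzle path lie in the same NE/SW column, and in any case the total number of pink dots is always $n-k$, so dots move rather than appear. What actually happens (as the paper records) is that $\sweep\gamma \lessdot \gamma_1$ is a type~(2) cover---one dot slides due SW within the kink column---while $\sweep\gamma \lessdot \gamma_0$ is a type~(1) cover in a parallelogram whose East corner is the rhombus just filled. The covers $\gamma_0 \lessdot \gamma_K$ and $\gamma_1 \lessdot \gamma_K$ are likewise of mixed type and do \emph{not} share a single rectangle in the way you describe; so the ``rectangle picture'' you intend to build the rest of the argument on does not exist.

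This also undermines your route to the scheme equality. Invoking Krull's height theorem for the codimension of $\barPi_{\gamma_0}\cap\barPi_{\gamma_1}$ is not justified here (these divisors need not be Cartier in $\barPi_{\sweep\gamma}$), and your fallback---that any common upper bound $\rho\neq\gamma_K$ must satisfy $\rho>\gamma_K$---is exactly the statement that $\gamma_K$ is the join of $\gamma_0,\gamma_1$, which you have not established because the rectangle picture is wrong. The paper bypasses all of this by checking directly that $r(\gamma_K)=\min\big(r(\gamma_0),r(\gamma_1)\big)$ entrywise; then $\barPi_{\gamma_0}\cap\barPi_{\gamma_1}$ and $\barPi_{\gamma_K}$ agree set-theoretically, and since the intersection is reduced (Lemma~\ref{lem:intersectIRvars}) they agree as schemes. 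Your use of Lemma~\ref{lem:matchings}(4) for the coordinate subspaces is the right idea; the paper writes down the matchings $m_0,m_1$ explicitly (each dot goes due SW or due SE according to a threshold on its column), which is what makes the conditions $i,j\notin\lambda_0$ and $j\in\lambda_1$ visible.
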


An example is in figure \ref{fig:10fillex}. 
Since $\barPi_{\gamma_0},\barPi_{\gamma_1}$ are irreducible of the
same dimension (codimension $1$ in $\barPi_{\sweep\gamma}$), neither one 
contains the other, and since they are defined by the vanishing of
Pl\"ucker coordinates \cite[theorem 7.4]{KLS}, the existence of
$V_{\lambda_0},V_{\lambda_1}$ is clear. 
Rather, the difficult parts of the last conclusion are the conditions on 
$\lambda_0,\lambda_1$.

\begin{proof}
  From $\sweep\gamma$ to $\gamma_1$, 
  we flip the pink ray NE from the $\bslash 1$
  in $\sweep\gamma$ to SW from the $\bslash 0$ in $\gamma_0$, and the pink dot
  in the kink NE/SW column is the only one that moves (it moves due SW, to
  the first $\fslash 1$ below the kink). 
  This is a type (2) move from corollary \ref{cor:coveringrelations}.

  From $\sweep\gamma$ to $\gamma_0$ is a type (1) move. The rhombus just filled
  is in the East corner of the relevant parallelogram, and the
  West corner is in the same NE/SW column as the last $\bslash 0$ before
  the kink, and the first NW/SE $\fslash R$ after it. 

  We leave the reader to check that when $\gamma_K$ exists, it covers both
  $\gamma_0$ and $\gamma_1$. Consequently
  $\barPi_{\gamma_K} \subseteq \barPi_{\gamma_0} \cap \barPi_{\gamma_1}$.
  To show equality, we need the stronger statement that the rank matrices
  $r({\gamma_K}) = \min(r({\gamma_0}),r({\gamma_1}))$ entrywise,
  which is also straightforward to check.

  To construct the required $\lambda_0,\lambda_1$, we construct matchings
  of their complements as in lemma \ref{lem:matchings}; each pink dot $d$
  must be matched with an edge on the bottom of puzzle in the range
  $[i(d),j(d)]$. The matchings we will use match each dot due Southwest 
  (to $i(d)$) or due Southeast (to $j(d)$). Define them by
  $$ m_0(d) = 
  \begin{cases}
    i(d) & \text{if } j(d)<j \\
    j(d) & \text{if } j(d)\geq j,
  \end{cases}
  \qquad\qquad
  m_1(d) = 
  \begin{cases}
    i(d) & \text{if } i(d)<i \\
    j(d) & \text{if } i(d)\geq i
  \end{cases}
  $$
  (where the $d$s are the pink dots of $\gamma_0$, $\gamma_1$ respectively).
  It is trivial to check that these are injective, so the complements
  $\lambda_0,\lambda_1$ of their images give the coordinates of some subspaces 
  $V_{\lambda_0} \in \barPi_{\gamma_0}$, $V_{\lambda_1} \in \barPi_{\gamma_1}$.
  To check that
  $V_{\lambda_0}, V_{\lambda_1} \notin \barPi_{\gamma_K}$, 
  we use criterion (4) of lemma \ref{lem:matchings}.
  We leave the reader to check 
  that $\lambda_0 \not\ni i,j$, $\lambda_1 \ni j$. 
\junk{big mess here
  {\em Adding the top $K$-piece.}
  To finish this case, it remains to identify the intersection 
  $\Pi_{\gamma_1} \cap \Pi_{\gamma_0}$ of the two components of 
  $\shift_{i\to j} \Pi_\gamma$. 
  Again applying lemma \ref{lem:intersectIRvars}, we know the
  intersection is a reduced union of interval positroid varieties;
  one could use general properties of flat degenerations to infer that
  it must be codimension $1$ and even pure (since $\shift_{i\to j} \Pi_\gamma$
  is reduced and hence the limit branchvariety \cite{AK}, 
  not just limit subscheme). 

  But we will be able to compute the intersection more directly.
  Let $\gamma_K$ be the result of adding the top $K$-piece to $\gamma$;
  we claim that $\Pi_{\gamma_1} \cap \Pi_{\gamma_0} = \Pi_{\gamma_K}$,
  or equivalently, that $\min(r(\gamma_1),r(\gamma_0)) = r(\gamma_K)$
  entrywise. 

  First we check that $\gamma_1$ covers $\gamma_K$ by a type (1) covering
  relation, and $\gamma_0$ covers $\gamma_K$ by a type (2).
  Call the corresponding parallelograms the \defn{$1K$-} 
  and \defn{$0K$-parallelograms}. 

  We know that $r(\gamma_1) = r(\gamma)$ away from the $1$-parallelogram
  (minus the top NW/SE and NE/SW columns), and
  $r(\gamma_0) = r(\gamma)$ away from the $0$-parallelogram
  (minus the top NW/SE and NE/SW columns).
}
\end{proof}

In particular, the union $\Pi_{\gamma_1} \cup \Pi_{\gamma_0}$ is
Cohen-Macaulay, as it is a union of two C-M schemes along a C-M divisor.
In the proof of theorem \ref{thm:filling} in the next section we will show that 
$\shift_{i\to j} \Pi_\gamma = \Pi_{\gamma_1} \cup \Pi_{\gamma_0}$.

\section{Proof of the cohomological formulae
  \ref{thm:hpuz}-\ref{thm:ktpuz}}\label{sec:ktproof}

We need a couple of lemmas about geometric shifts.

\newcommand\barB{{\overline B}}

\begin{Lemma}\label{lem:shiftbasic}
  Let $S$ be a subset of $\{1,\ldots,n\}$, and $r\in\naturals$.
  Let 
  $$ \barB_{S\leq r} := \{M \in M_{k\times n} 
  : rank(k\times |S|\text{ submatrix of $M$ with columns $S$})
  \leq r \}. $$
  Then if $i\in S$ or $j\notin S$, 
  $\shift_{i\to j} \barB_{S\leq r} = \barB_{S\leq r}$. 
  Otherwise
  $ \shift_{i\to j} \barB_{S\leq r} = \barB_{(S\setminus j \union i) \leq r}. $
\end{Lemma}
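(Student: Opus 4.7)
The plan is to dispose of the two invariance cases quickly, and then compute the flat limit in the remaining case by expanding the defining minors via multilinearity. Let $g(t) := \exp(t e_{ij})$; as right-multiplication on $M_{k\times n}$ it implements the column operation ``add $t$ times column $i$ to column $j$'', leaving every other column fixed.

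First, if $j\notin S$, the $S$-submatrix of $M\cdot g(t)$ is literally unchanged, so $\barB_{S\leq r}$ is $g(t)$-invariant and its flat limit equals itself. If $i\in S$ (so also $j\in S$, otherwise we are in the previous case), the new column $j$ of $M\cdot g(t)$ lies in the column span of the original $S$-submatrix, so the rank condition is preserved and again $\barB_{S\leq r}$ is $g(t)$-invariant. Both easy cases are thus dispatched.

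For the remaining case $i\notin S$, $j\in S$, set $S' := (S\setminus j)\cup i$ and $\overline{C} := \barB_{S'\leq r}$. By the easy case applied to $\overline{C}$ (since $i\in S'$), the variety $\overline{C}$ is itself $g(t)$-invariant. I would then set up the trivial family $\mathcal{Y}\subseteq \AA^1\times M_{k\times n}$ whose fiber over $t$ is $\barB_{S\leq r}\cdot g(t)$; it is cut out by the $(r+1)\times(r+1)$ minors of the $k\times|S|$ submatrix with columns $\{M_s\}_{s\in S\setminus j}\cup\{M_j-tM_i\}$. By multilinearity of the determinant in this last column, each such minor expands as $a(M)-t\,b(M)$, where $a$ is the corresponding minor of the submatrix with columns $S$ and $b$ is the corresponding minor of the submatrix with columns $S'$. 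Homogenizing via $t=T_1/T_0$ replaces $a-tb$ by $aT_0-bT_1$, and restricting the closure $\overline{\mathcal{Y}}\subseteq \PP^1\times M_{k\times n}$ to the fiber over $\infty$ (where $T_0=0$) forces all the $b$'s to vanish. Hence $\shift_{i\to j}\barB_{S\leq r}$ is scheme-theoretically contained in $\overline{C}$.

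The main obstacle is upgrading this containment to equality as schemes; my proposed resolution is matching Hilbert series. The family $\overline{\mathcal{Y}}\to\PP^1$ is flat (its restriction to $\AA^1$ is trivial), so by the constancy of multigraded Hilbert series along flat families the fiber at $\infty$ has the same Hilbert series as the generic fiber $\barB_{S\leq r}$. On the other hand, $\barB_{S\leq r}$ and $\overline{C}$ are classical determinantal varieties of the same shape (rank $\leq r$ on a $k\times|S|$-column submatrix, using $|S|=|S'|$), so they share this Hilbert series. A subscheme of the reduced irreducible $\overline{C}$ with the same Hilbert series can have no embedded components and no multiplicity greater than one, forcing $\shift_{i\to j}\barB_{S\leq r}=\overline{C}$.
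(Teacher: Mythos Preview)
Your argument is correct and is essentially the paper's own proof: handle the invariant cases directly, obtain the containment $\shift_{i\to j}\barB_{S\leq r}\subseteq \barB_{S'\leq r}$ by expanding the defining minors in $t$ and passing to the limit, then upgrade to equality by noting that the flat limit and the target have the same (singly-)graded Hilbert series (the paper phrases this as ``$S_n$-symmetry''). One small nit: the word ``multigraded'' is misleading here since $g(t)$ does not respect the column multigrading---the total-degree grading is what is preserved and what suffices.
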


\begin{proof}
  Recall that $\shift_{i\to j} X := \lim_{t\to\infty} \exp(te_{ij})\cdot X$,
  and
  $$ \exp(te_{ij})\cdot X 
  = \{M + t(\text{column $i$ added to column $j$}) : M \in X. \} $$
  If $j\notin S$, then $rank($submatrix of $M$ with columns $S$)
  is unaffected by adding $t\cdot$column $i$ to column $j$,
  hence $\exp(te_{ij})\cdot X = X$ for all $t$.
  The same is true if $i\in S$.

  For the interesting case, we need to look closer at
  the equations defining $X$: for each $C\subseteq S$
  and $R\subseteq \{1,\ldots,k\}$, with $|C| = |R| = r+1$,
  the $M$-minor $d_{R,C}$ using rows $R$ and columns $C$ vanishes.
  Then the equations defining $\exp(te_{ij})\cdot X$ are
  $d_{R,C} = 0$ for $j\notin R$, and $d_{R,C} + t d_{R\setminus j \union i,C}=0$
  for $j\in R$. Rescaling the latter by $t^{-1}$, and taking $t\to\infty$,
  we find what are a priori {\em some} of the equations on $\shift_{i\to j} X$:
  $$ \shift_{i\to j} X \subseteq \{M \in M_{k\times n} :   
  rank(k\times |S|\text{ submatrix of $M$ with columns $S\setminus j \union i$})
  \leq r \}. $$
  Since $X$ and $\shift_{i\to j} X$ are conical affine schemes with
  the same Hilbert series (one being a degeneration of the other), 
  and this upper bound also has that same Hilbert series by $S_n$-symmetry,
  the upper bound must be tight.
\end{proof}

(A more general statement is true: if $\shift_{j\to i} X = X$, then
$\shift_{i\to j} X = (i\leftrightarrow j)\cdot X$.)
Notice that the shift $i\to j$ on columns acts backwards on these
``basic'' rank conditions; when possible, the $j\in S$ turns into an $i$.

For calculations in $H^*(\Grkn)$ or $K(\Grkn)$, we have the equation on classes
$$ [X] = [\shift_{i\to j} X] $$
but for equivariant calculations we need the following lemma.

\begin{Lemma}\label{lem:KTdegen}
  Let $X \subseteq \Grkn$ be a $T$-invariant subvariety.
  Consider the space of pairs 
  $$ F := \overline{ \{ (t, \exp(t e_{ij}) \cdot x) : t\in \AA^1, x\in X)\}
  \subseteq \AA^1 \times \Grkn } \subseteq \PP^1 \times \Grkn. $$
  Let $\pi_1,\pi_2$ denote the projections of 
  $F \subseteq \PP^1 \times \Grkn$ to $\PP^1, \Grkn$.
  Let $Y := \sweep_{i\to j} X$ be the image $\pi_2(F)$.

  If the map $F \to Y$ has degree $d$ (taken to be $0$ if the fibers
  are $\PP^1$s), then
  we have the following equality between $H^*_T(\Grkn)$-classes:
  $$ [X] = d (y_j-y_i) [Y] + [\shift_{i\to j} X]. $$

  Identify $\Grkn^T$, the set of coordinate $k$-planes,
  with the collection of $k$-element subsets of $\{1,\ldots,n\}$.
  If $\lambda \in X^T$ is a point such that $i\in \lambda, j\notin \lambda$,
  and $(i\leftrightarrow j)\cdot\lambda \notin X^T$,
  then $\dim Y = \dim X + 1$ and $d=1$.

  Now assume that $Y := \sweep_{i\to j} X$ has rational singularities,
  and that $d$ is indeed $1$. Then in $K_T(\Grkn)$ one has
  $$ [X] = \big(1-\exp(y_i-y_j)\big) [Y] + \exp(y_i-y_j) [\shift_{i\to j} X].$$
\end{Lemma}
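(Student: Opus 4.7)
The plan is to pull back and push forward a universal $\PP^1$-identity under the maps $\pi_1$ and $\pi_2$ of the family $F$. First set up the equivariance: since $s e_{ij} s^{-1} = \exp(y_i - y_j)(s) \cdot e_{ij}$ for $s \in T$, the subscheme $F \subseteq \PP^1 \times \Grkn$ is $T$-invariant when $T$ acts on the $\AA^1$ coordinate $t$ with weight $y_i - y_j$; then $\pi_1$ and $\pi_2$ are $T$-equivariant, $0$ and $\infty$ are $T$-fixed with tangent weights $y_j - y_i$ and $y_i - y_j$ respectively, and scheme-theoretically $F_0 = X$, $F_\infty = \shift_{i\to j}X$ with $\pi_1$ flat by construction of the closure. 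Equivariant localization on $\PP^1$ gives the identity $[\{0\}] - [\{\infty\}] = (y_j - y_i)\cdot 1$ in $H^*_T(\PP^1)$. Pulling this back along $\pi_1$ and pushing forward along $\pi_2$, using that $F_0 \to X$ and $F_\infty \to \shift_{i\to j}X$ are isomorphisms and that $\pi_2 \colon F \to Y$ is generically finite of degree $d$ (with the convention $d=0$ in the $\PP^1$-fiber case), yields $[X] - [\shift_{i\to j}X] = d(y_j - y_i)[Y]$, which is the $H^*_T$ formula.

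For the statement about $\lambda$, the orbit closure $\overline{\exp(\AA^1 e_{ij}) \cdot V_\lambda}$ is a $\PP^1$ passing through $V_\lambda \in X$ and terminating at $V_{(i \leftrightarrow j)\cdot\lambda}$, which by hypothesis lies in $\sweep_{i\to j}X = Y$ but not in $X$. So $Y$ strictly contains $X$, forcing $\dim Y = \dim X + 1$ since the sweep can add at most one direction. To see $d = 1$, I would check birationality at a single sweep-line point: pick $y = \exp(t_0 e_{ij}) V_\lambda$ generic, so that the fiber $\pi_2^{-1}(y) \cap (\AA^1 \times \Grkn)$ is indexed by $s \in \AA^1$ with $\exp(s e_{ij}) V_\lambda \in X$. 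Because $T$ rescales $s$ (of weight $y_i - y_j$) and $X$ is $T$-invariant, the set of admissible $s$ is a union of $\mathbb{G}_m$-orbits in $\AA^1$; were it nonempty away from $0$, its $\PP^1$-closure would hit $\infty$, forcing $V_{(i\leftrightarrow j)\cdot\lambda} \in X$, contradicting the hypothesis. Hence the only preimage of generic $y$ is $(t_0, y)$, and one checks no new preimage appears at $t = \infty$ either, giving $d = 1$.

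For the $K_T$ statement, repeat the recipe with the analogous $\PP^1$-identity
$$ [\O_{\{0\}}] \;=\; \exp(y_i - y_j)\,[\O_{\{\infty\}}] + \bigl(1 - \exp(y_i - y_j)\bigr)\,[\O_{\PP^1}] $$
in $K_T(\PP^1)$ (again from localization, using the normal-bundle weights $y_j - y_i$ at $0$ and $y_i - y_j$ at $\infty$). Flatness of $\pi_1$ turns this into $[\O_{F_0}] = \exp(y_i - y_j)[\O_{F_\infty}] + (1 - \exp(y_i - y_j))[\O_F]$ in $K_T(F)$, and proper pushforward along $\pi_2$ identifies the first two terms with $[\O_X]$ and $[\O_{\shift_{i\to j}X}]$. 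The main obstacle is the third term: to get $(\pi_2)_* [\O_F] = [\O_Y]$ one needs the derived equality $R(\pi_2)_* \O_F = \O_Y$. Under $d = 1$ the map $\pi_2 \colon F \to Y$ is birational and proper, so given the rational-singularities hypothesis on $Y$ it suffices that $F$ also have rational singularities—which in the applications of this lemma holds because $X$ is itself a Vakil variety (with rational singularities by Theorem \ref{thm:geom}), making $\pi_1^{-1}(\AA^1) \cong X \times \AA^1$ benign, and because the flat limit $F_\infty = \shift_{i\to j}X$ compactifies $F$ reducedly. Kempf-style vanishing then delivers $R(\pi_2)_* \O_F = \O_Y$, and assembling everything in $K_T(\Grkn)$ produces the claimed identity.
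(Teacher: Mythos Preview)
Your pull-back/push-forward strategy from the $\PP^1$ identity is exactly the paper's approach for both the $H^*_T$ and $K_T$ formulae. The genuine gap is in your argument that $d=1$. You compute the fiber of $\pi_2$ over $y=\exp(t_0 e_{ij})V_\lambda$ for ``generic'' $t_0$, but two steps are unverified. First, your $\mathbb G_m$-invariance argument shows only that the \emph{set} $\{s:\exp(se_{ij})V_\lambda\in X\}$ is $\{0\}$, not that this scheme is reduced; without reducedness the fiber length could exceed $1$ and the upper-semicontinuity argument for degree fails. Second, ``no preimage at $t=\infty$'' amounts to $y\notin\shift_{i\to j}X$, and since $\shift_{i\to j}X$ is $\exp(e_{ij})$-invariant this is equivalent to $V_\lambda\notin\shift_{i\to j}X$, which does not follow from the hypotheses. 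The paper avoids both issues by computing the fiber over the \emph{other} endpoint $(i\leftrightarrow j)\cdot\lambda$: it sandwiches $X$ between $X_\leq:=\{V_\lambda\}$ and the Pl\"ucker divisor $X_\geq:=\{p_{(i\leftrightarrow j)\cdot\lambda}=0\}$ (the containment $X\subseteq X_\geq$ coming from $T$-invariance and a Bia\l ynicki-Birula argument), and then checks explicitly via the defining equation $ap_\lambda+bp_{(i\leftrightarrow j)\cdot\lambda}=0$ of $F_\geq$ that both bounding fibers over $(i\leftrightarrow j)\cdot\lambda$ are the single reduced point $(\infty,(i\leftrightarrow j)\cdot\lambda)$.

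On the $K_T$ pushforward $(\pi_2)_*[\O_F]=[\O_Y]$: you are right that birationality plus rational singularities of $Y$ does not formally yield $R(\pi_2)_*\O_F=\O_Y$ without some control on $F$, and the paper simply asserts this step. Your workaround (arguing $F$ has rational singularities ``in the applications'') is honest, but it narrows the lemma below its stated generality, and your justification for rational singularities of $F$ near $\pi_1^{-1}(\infty)$ is not complete as written.
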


\begin{proof}
  If we let $T$ act on this $\PP^1$ by 
  $diag(t_1,\ldots,t_n)\cdot z := t_i t_j^{-1} z$, 
  and hence let $T$ act on $\PP^1 \times \Grkn$ diagonally, 
  then $F$ is $T$-invariant.

  The $H^*_T$ and $K_T$ calculations are very similar, so we do the
  harder one, $K_T$.
  We must be careful to distinguish between $K_T$ (cohomology) and
  $K^T$ (homology) in the following, because $F$ is unlikely to be smooth.
  Take the equation in $K_T(\PP^1)$
  $$ 1 - \exp(y_i-y_j) = [\{0\}] - \exp(y_i-y_j) [\{\infty\}], $$
  pull back with $\pi_1^*$ to $K_T(F)$, and cap with the fundamental class
  to get an equation in $K^T(F)$:
  $$ (1 - \exp(y_i-y_j)) [F] 
  = [\{0\} \times X] - \exp(y_i-y_j) [\{\infty\} \times \shift_{i\to j} X] $$
  Push forward with $(\pi_2)_*$ 
  to get an equation in $K^T(\Grkn)$:
  $$ (1 - \exp(y_i-y_j)) (\pi_2)_* [F] 
  = [X] - \exp(y_i-y_j) [\shift_{i\to j} X]. $$
  If $\shift_{i\to j} X = X$, then $F = \PP^1 \times X$, and 
  $(\pi_2)_* [F] = [Y] = [X]$, and this is trivial. 
  So assume not. Then $\dim F = \dim Y = 1 + \dim X$. 

  Since the map $\pi_2: F \onto Y$ is birational (by $d=1$),
  and since $Y$ was assumed to have rational singularities, 
  $(\pi_2)_* [F] = [Y]$. Finally we use smoothness of $\Grkn$ to
  move the equation from $K^T(\Grkn)$ to $K_T(\Grkn)$.

  The derivation for $H^*_T$ proceeds from the $H^*_T(\PP^1)$-equation
  $$ y_j-y_i = [\{0\}] - [\{\infty\}] $$
  and the calculation $(\pi_2)_* [F] = d [Y]$.

  It remains to prove the second claim, that $\dim Y = \dim X + 1$ and $d=1$.
  Since $\shift_{i\to j} X \supseteq \shift_{i\to j} \lambda 
  = (i\leftrightarrow j)\cdot\lambda \notin X$, the (irreducible) image $Y$ 
  of $F$ contains $X$ strictly, so is of larger dimension. 
  But $\dim F = \dim X + 1$, so its image $Y$ can only be one
  dimension larger.

  To show that $d=1$, we show that the preimage in $F$ of 
  $(i\leftrightarrow j)\cdot \lambda \in Y$ is the single, reduced point
  $(\infty, \lambda)$. 
  The two extreme cases of $X$ will turn out to be $X_\leq := \{\lambda\}$ 
  and $X_\geq := \{ V : p_{(i\leftrightarrow j)\cdot\lambda}(V)=0 \}$,
  where $p_\lambda$ is the corresponding Pl\"ucker coordinate.
  By assumption, $X_\leq \subseteq X$. 
  The open set $p_{(i\leftrightarrow j)\cdot\lambda}(V)=0$ in $\Grkn$ is also
  the open Bia\l ynicki-Birula stratum for a one-parameter subtorus of $T$, 
  with $(i\leftrightarrow j)\cdot\lambda$ the attractive fixed point.
  By the $T$-invariance of $X$, 
  if $X$ were to intersect this open set, it would contain 
  $(i\leftrightarrow j)\cdot\lambda$. Since $X$ is reduced, it must lie in 
  the divisor complementary to this open set.

  Hence we can trap $X$ in $X_\leq \subseteq X \subseteq X_\geq$. 
  Let $F_\leq,F_\geq$ be the corresponding families, so we can similarly
  trap the fiber $  F_{(i\leftrightarrow j)\cdot \lambda}$ over
  $(i\leftrightarrow j)\cdot \lambda$ in
  $$ (F_\leq)_{(i\leftrightarrow j)\cdot \lambda} \subseteq
  F_{(i\leftrightarrow j)\cdot \lambda} \subseteq
  (F_\geq)_{(i\leftrightarrow j)\cdot \lambda}. $$
  The lower bound contains 
  the point $(\infty,(i\leftrightarrow j)\cdot\lambda)$.
  The large family $F_\geq$ is defined by the equation
  $$ F_\geq = \{ ([a,b], V) : 
  a p_\lambda(V) + b p_{(i\leftrightarrow j)\cdot\lambda}(V) = 0 \} $$ 
  (a particular case of the calculation in the proof 
  of lemma \ref{lem:shiftbasic}). Over the point
  $(i\leftrightarrow j)\cdot \lambda$, this equation is $b=0$, 
  defining the same point $(\infty,(i\leftrightarrow j)\cdot\lambda)$.
\end{proof}

\junk{
Theorem \ref{thm:rankmatrices} associated a matrix Schubert variety
$\barX_\pi \subseteq M_{(k+n)\times(n+k)}$ to any interval rank matrix $r$.
Recall from \cite[theorem A]{GrobnerGeom} that the corresponding class
in $K_{T^{k+n}\times T^{n+k}}(M_{(k+n)\times(n+k)}) 
\iso \integers[\exp(\pm z_1),\ldots,\exp(\pm z_{n+k}),
\exp(\pm y_1),\ldots,\exp(\pm y_{n+k})]$ is the
\defn{double Grothendieck polynomial} $G_\pi$. 
These satisfy the following \defn{Monk formula:}

\begin{Theorem*}\cite[Corollary 8.2]{LP}
\end{Theorem*}
}

\begin{Theorem}\label{thm:filling}
  Let $\gamma$ be a non-final puzzle path, whose kink is followed by 
  a SW edge. Let $p_1,\ldots,p_d$ be the rhombi that can be added to $\gamma$,
  giving the puzzle paths $\gamma'_1,\ldots,\gamma'_d$. 
  Then each $\Pi_{\gamma'_i}$ is $\shift_{i(p)\to j(p)}$-invariant. 
  Let $\Phi(E^*,\rho) \in E^*(pt)$ be the factors associated to the 
  cohomology theory $E^* \in \{H^*, H^*_T, K, K_T\}$, 
  as defined in \S \ref{ssec:positivity}.

  Then we have the following equality between classes in $E^*(\Grkn)$:
  $$ [\Pi_\gamma] = \sum_{i=1}^d \Phi(E^*,p_i)\ [\Pi_{\gamma'_i}] $$
\end{Theorem}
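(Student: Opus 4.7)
The plan splits on whether the kink and next SW step of $\gamma$ are labeled $(1,0)$. When they are not, Lemma \ref{lem:not1then0} produces a unique rhombus $p$ whose addition preserves the pink dots, giving $\Pi_{\gamma'} = \Pi_\gamma$; a short check against the label cases in that lemma shows $p$ is never an equivariant piece, a $K$-piece, nor a rhombus with $1,0$ on its right side, so $\Phi(E^*, p) = 1$ in all four theories and the equation reduces to $[\Pi_\gamma] = [\Pi_\gamma]$. The $\shift_{i(p) \to j(p)}$-invariance of $\Pi_\gamma$ then follows from Lemma \ref{lem:shiftbasic}, since each essential rank condition $(i',j')$ either excludes column $j(p)$ or already includes $i(p)$.

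In the main case (kink $\bslash 1$, next $\fslash 0$, position $(i,j) := (i(e), j(e))$), Lemma \ref{lem:filling} enumerates the up-to-four addable pieces: the equivariant piece giving $\sweep\gamma$, the two triangle-pair rhombi giving $\gamma_0, \gamma_1$, and (when both triangle rhombi are addable) the top $K$-piece giving $\gamma_K$. The argument reduces to three geometric identifications: (a) $\sweep_{i \to j}\Pi_\gamma = \Pi_{\sweep\gamma}$, (b) $\shift_{i \to j}\Pi_\gamma = \Pi_{\gamma_0} \cup \Pi_{\gamma_1}$ as a reduced scheme, and (c) $\Pi_{\gamma_0} \cap \Pi_{\gamma_1} = \Pi_{\gamma_K}$ scheme-theoretically---this last is Lemma \ref{lem:Kmeaning}. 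For (a) and (b) I would translate the action of the shift on each essential rank condition via Lemma \ref{lem:shiftbasic}; the unique affected condition transforms into one whose vanishing locus splits as the union of those defining $\Pi_{\gamma_0}$ and $\Pi_{\gamma_1}$, and reducedness is delivered by Theorem \ref{thm:geom}(1). The asserted $\shift_{i\to j}$-invariance of each $\Pi_{\gamma'_i}$ comes from the proposition at the end of \S\ref{sec:variety}, since in each $\gamma'_i$ the kink has moved past the $(i,j)$ rhombus.

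The cohomological identity falls out of Lemma \ref{lem:KTdegen}. The hypothesis $d=1$ is verified using the witness $\lambda_1 \in \Pi_{\gamma_1}^T$ with $j \in \lambda_1$ supplied by Lemma \ref{lem:Kmeaning}: set $\mu := (i \leftrightarrow j)\lambda_1$, and use the matching $m_1$ from the proof of Lemma \ref{lem:matchings} to check $\mu \in \Pi_\gamma^T$ with $i \in \mu, j \notin \mu$, while $\lambda_1 \notin \Pi_\gamma^T$ since $\Pi_{\gamma_1} \not\subseteq \Pi_\gamma$. Rational singularities of $\Pi_{\sweep\gamma}$ come from Theorem \ref{thm:rankmatrices}. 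Combining the resulting identity $[\Pi_\gamma] = (1 - e^{y_i - y_j})[\Pi_{\sweep\gamma}] + e^{y_i - y_j}[\shift_{i \to j}\Pi_\gamma]$ with the Mayer-Vietoris sequence $0 \to \mathcal{O}_{\shift\Pi_\gamma} \to \mathcal{O}_{\Pi_{\gamma_0}} \oplus \mathcal{O}_{\Pi_{\gamma_1}} \to \mathcal{O}_{\Pi_{\gamma_K}} \to 0$ from (b) and (c) expands the shift class and matches each summand to the corresponding $\Phi(K_T, p_k)[\Pi_{\gamma'_k}]$; the $H^*_T$ case uses the first part of Lemma \ref{lem:KTdegen}, and the $H^*, K$ statements follow by specialization.

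The main obstacle is the scheme-theoretic identification (b)---proving that the flat limit $\shift_{i\to j}\Pi_\gamma$, which Theorem \ref{thm:geom} guarantees is reduced with at most two components, coincides on the nose with the union $\Pi_{\gamma_0} \cup \Pi_{\gamma_1}$ coming from the two puzzle-piece alternatives---and, in the $K_T$ setting, ensuring that the Mayer-Vietoris assembly matches $\Phi(K_T, \cdot)$ term by term, in particular tracking how the prefactor $e^{y_i - y_j}$ from Lemma \ref{lem:KTdegen} combines with the coefficient at $[\Pi_{\gamma_K}]$ to produce the announced $\Phi(K_T, \text{top }K) = -1$ once the downstream middle and bottom $K$-pieces in any completion of $\gamma_K$ are accounted for.
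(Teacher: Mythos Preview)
Your overall architecture matches the paper's: split on whether the kink/SW labels are $(\bslash 1,\fslash 0)$, invoke Lemma~\ref{lem:not1then0} in the trivial case, and in the interesting case establish the sweep identification, apply Lemma~\ref{lem:KTdegen}, then decompose the shift and use inclusion--exclusion via Lemma~\ref{lem:Kmeaning}. But several of your load-bearing steps have genuine gaps.

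\textbf{Circularity.} You appeal to Theorem~\ref{thm:geom}(1) for the reducedness of $\shift_{i\to j}\Pi_\gamma$. Theorem~\ref{thm:geom} is not proved independently in the paper; it is a consequence of Theorem~\ref{thm:filling} and its supporting lemmas, so you cannot cite it here. The paper instead bounds $\shift_{i\to j}\Pi_\gamma \subseteq \Pi_{\sweep\gamma}\cap B_{[i,j-1]\leq r(\gamma)_{i+1,j}}$ via Lemma~\ref{lem:shiftbasic} and a rank inequality (the ``fourth claim''), and then uses Lemma~\ref{lem:intersectIRvars} to see that this intersection of interval positroid varieties is \emph{reduced} with components indexed by covering relations from Corollary~\ref{cor:coveringrelations}. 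That is where reducedness actually comes from.

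\textbf{The identification (b).} Translating the shift through each essential rank condition via Lemma~\ref{lem:shiftbasic} only yields a \emph{containment} $\shift_{i\to j}\Pi_\gamma\subseteq(\text{something})$; it does not give equality, and it does not by itself tell you that the right-hand side is exactly $\Pi_{\gamma_0}\cup\Pi_{\gamma_1}$. The paper works harder here: it classifies the codimension-one components of the upper bound using the covering-relation parallelograms, matches them to the two triangle-pair rhombi, and then---crucially---shows that $\shift_{i\to j}\Pi_\gamma$ contains a point of each $\Pi_{\gamma_a}\setminus\Pi_{\gamma_{1-a}}$ (the $\lambda_0,\lambda_1$ of Lemma~\ref{lem:Kmeaning}, with their stated conditions on membership in $i,j$) to force equality. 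Your sketch elides this.

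\textbf{The $d=1$ witness.} Your argument here is flawed in two ways. First, you pull the witness $\lambda_1$ from Lemma~\ref{lem:Kmeaning}, but that lemma is only available when \emph{both} $\gamma_0$ and $\gamma_1$ exist; you still need $d=1$ in the one-component case. Second, the inference ``$\lambda_1\notin\Pi_\gamma^T$ since $\Pi_{\gamma_1}\not\subseteq\Pi_\gamma$'' is a non sequitur: noncontainment of varieties says nothing about a particular point. The paper constructs the required $V_\lambda\in\Pi_\gamma$ with $j\notin\lambda$ and $(i\leftrightarrow j)\cdot V_\lambda\in\Pi_{\sweep\gamma}\setminus\Pi_\gamma$ directly from an explicit matching on the pink dots of $\gamma$ and $\sweep\gamma$ (the ``second claim''), independent of whether one or two triangle-pair rhombi are addable.

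\textbf{Your final worry.} The concern about the coefficient on $[\Pi_{\gamma_K}]$ is legitimate but local: the proof produces $-\exp(y_i-y_j)$, and the top $K$-piece has $1,0$ on its right side, so the two clauses of the $\Phi(K_T,\cdot)$ definition should be read multiplicatively for it (as the worked example in \S\ref{sec:ex} confirms). No appeal to ``downstream'' middle/bottom $K$-pieces is needed at this step.
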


\begin{proof}
  If the labels on the kink-then-SW-edge are not $\bslash 1,\fslash 0$,
  then lemma \ref{lem:not1then0} applies: there is a unique $p = p_1$,
  with $\Phi(K_T,p_i) = 1$, and $\Pi_{\gamma} = \Pi_{\gamma'_1}$.
  The equation on $E^*(\Grkn)$ classes is then trivial.
  So we assume hereafter that we are in the interesting case, that
  the labels on the kink-then-SW-edge are indeed $\bslash 1,\fslash 0$.

  \junk{
  The fact that $\gamma$ has no essential edges due SE of $p$ says
  that each essential rank condition defining $\Pi_\gamma$ is
  $\shift_{i(p)\to j(p)}$-invariant. 
  Hence $\Pi_{\gamma'_1}$ is $\shift_{i(p)\to j(p)}$-invariant. 
  }

  \newcommand\sidein{\rotatebox{90}{$\in$}}

  \junk{  
    At this point it will be more convenient to assume that $E^*$ {\em is}
    equivariant, so $H^*_T$ or $K_T$, and simply observe that the
    nonequivariant formul\ae\ follow from the equivariant under the
    specialization $\yy \mapsto 0, e^\yy \mapsto 1$.
    The two cases $H^*_T$ and $K_T$ are very similar, so we deal with
    the harder one, $K_T$. As we will make use of other torus actions
    in the proof we will refer to this one more specifically as $K_{T^n}$.

    Now we consider the interesting case, 
    that the labels on the kink-then-SW-edge are $\bslash 1,\fslash 0$.
    To compute classes in $K_{T^n}(\Grkn)$, we use the following maps of spaces
    $$ \Grkn \stackrel{\dom GL(k)}{\fromonto} \St_{k,n} 
    \into C \into M_{(k+n)\times (n+k)} = M_{(k+n)\times (n+k)} $$
    where $T^n$ acts in the standard way on $\Grkn$ and by right multiplication 
    on the space $C$ from the proof of theorem \ref{thm:rankmatrices},
    on $\St_{k,n}$ which we identify with a subset of $C$.
    For the inclusion $\St_{k,n} \to C$ to be $GL(k)\times T^n$-equivariant,
    we have $GL(k)$ act on the left of $C$, performing row operations on
    the first $k$ rows, and the $T^n$ acting on both sides, scaling the
    first $n$ columns and unscaling the last $n$ rows, in order to preserve 
    the $n\times n$ identity matrix in $C$'s elements.

    These maps induce the following maps on cohomology groups, and
    elements thereof:
    $$\begin{array}{ccccccccc}
      K_{T^n}(\Grkn) 
      &\iso& K_{GL(k)\times T^n}(\St_{k,n}) 
      &\from& K_{GL(k)\times T^n}(C) 
      &\from& K_{GL(k)\times T^n}(M_{(k+n)\times (n+k)}) 
      \\
      \sidein && \sidein && \sidein && \sidein 
      \\
      [0pt] [\Pi_r] 
      &\leftrightarrow& [\St_{k,n} \cap \barX_\pi]
      &\mapsfrom& [C \cap \barX_\pi]
      &\mapsfrom& [\barX_\pi]
    \end{array}$$
    Here $\pi \in S_{k+n}$ is the permutation defined from $r$ in the proof of
    theorem \ref{thm:rankmatrices}.
    To follow the elements, we use the fact that $[A][B] = [A\cap B]$ if
    $A,B$ are both Cohen-Macaulay and $A\cap B$ has the expected dimension.

    We wish to compute in $K_{GL(k)\times T^n}(M_{(k+n)\times (n+k)})$.
    To do so, we use
    $$ K_{GL(k)\times T^n}(M_{(k+n)\times (n+k)}) 
    \fromonto K_{(GL(k)\times T^n)\times(T^n\times T^k)}(M_{(k+n)\times (n+k)}) 
    \into K_{(T^k\times T^n) \times (T^n\times T^k)}(M_{(k+n)\times (n+k)}) $$
    where both maps come from restricting the action.
    In this last ring, the class $[\barX_\pi]$ is given by the
    double Grothendieck polynomial $G_\pi$ \cite[theorem A]{GrobnerGeom}, 
    and the desired formula will follow from the transition formula for
    double Grothendieck polynomials \cite{Lascoux}.
  }

  We have already analyzed the interesting case on the puzzle side,
  in lemma \ref{lem:filling}. 
  To replicate the $2$ or $4$ fillings that show up there, we will apply
  lemma \ref{lem:KTdegen} to $\Pi_\gamma$.
  To analyze the sweep and shift of $\Pi_\gamma$, we will need the
  puzzle-theoretic version of corollary \ref{cor:coveringrelations},
  in which the rectangles in the partial permutation matrix $J(r)$ are
  replaced by parallelograms in the puzzle (edges parallel to the rhombi). 
  Type (2) covering relations from the corollary now correspond to
  pink dots moving SW or SE.

  Let $\sweep \gamma$ denote the path constructed by adding the
  equivariant rhombus to $\gamma$. Let its NE/SW and NW/SE columns be $(i,j)$.
  Let $r(\gamma),r(\sweep \gamma)$ be the interval rank matrices associated
  to these two puzzle paths as in \S \ref{ssec:vakilvariety}.

  First claim: $\Pi_{\sweep \gamma} \supseteq \Pi_\gamma$. 
  Indeed, transferring the Bruhat order 
  from corollary \ref{cor:coveringrelations} on interval rank matrices
  over to the set of Vakil varieties, we see $\Pi_{\gamma}$ 
  covers $\Pi_{\sweep \gamma}$; the two pink dots in NE/SW column $i,i+1$
  exchange their NW/SE columns. 
  So $\Pi_\gamma$ is codimension $1$ in $\Pi_{\sweep \gamma}$.

  Second claim: 
  there is a $T$-fixed point $V_\lambda \in \Pi_\gamma$ such that
  $j \notin \lambda$, $(i\leftrightarrow j)\cdot V_\lambda \notin \Pi_\gamma$.
  Let $d$ be the Southern of the two pink dots of $\gamma$ that move 
  for $\sweep\gamma$, and define $m' : supp(J(\gamma)) \to \{1,\ldots,n\}$ by
  $$ m'(e) =
  \begin{cases}
    i(e) & \text{if } j(e)<j(d) \\
    j(e) & \text{if } j(e)\geq j(d). 
  \end{cases}
  $$
  By lemma \ref{lem:matchings} (2), the complement $\lambda$ of 
  the image of $m'$ has $V_\lambda \in \Pi_\gamma$. Since $m'(d) = j(d) = j$, 
  we have $j\notin \lambda$. 
  Now define $m : supp(J(\sweep\gamma)) \to \{1,\ldots,n\}$ by
  $$ m(e) =
  \begin{cases}
    i(e) & \text{if } j(e)\leq j(d) \\
    j(e) & \text{if } j(e)>j(d).
  \end{cases}
  $$
  Then $image(m) = (i\leftrightarrow j)\cdot image(m')$, and $m$ satifies
  lemma \ref{lem:matchings} (4), 
  so $(i\leftrightarrow j)\cdot V_\lambda 
  \in \Pi_{\sweep\gamma} \setminus \Pi_\gamma$. 

  Third claim: 
  $\Pi_{\sweep\gamma} = \sweep_{i\to j} \Pi_\gamma$.
  By the first claim,
  $\sweep_{i\to j} \Pi_\gamma \subseteq \sweep_{i\to j} \Pi_{\sweep\gamma}$.
  By lemma \ref{lem:filling}, $\sweep\gamma$ has no essential rank
  conditions on intervals $[k,l]$ with $i < k \leq j \leq l$,
  so $\Pi_{\sweep\gamma} = \shift_{i\to j} \Pi_{\sweep\gamma}
  = \sweep_{i\to j} \Pi_{\sweep\gamma}$. 
  Together, $\sweep_{i\to j} \Pi_\gamma \subseteq \Pi_{\sweep\gamma}$.
  By the second claim, $\Pi_\gamma$ is not $\shift_{i\to j}$-invariant, 
  hence $\dim \sweep_{i\to j} \Pi_\gamma = \dim \Pi_\gamma + 1 
  = \dim \Pi_{\sweep\gamma}$. 
  Thus the containment of varieties is an equality.

  Since $\Pi_{\sweep\gamma}$ is a positroid variety, it has
  rational singularities (\cite[corollary 7.10]{KLS}, or 
  use theorem \ref{thm:rankmatrices} and the corresponding fact 
  about Kazhdan-Lusztig varieties). The second claim allows us to
  apply lemma \ref{lem:KTdegen} to $\Pi_\gamma$, obtaining
  $$ [\Pi_\gamma] 
  = \big(1-\exp(y_i-y_j)\big) [\Pi_{\sweep\gamma}]
  + \exp(y_i-y_j) [\shift_{i\to j} \Pi_\gamma]
  \qquad \in K_T(\Grkn)$$
  and
  $$ [\Pi_\gamma] 
  = (y_j-y_i) [\Pi_{\sweep\gamma}]
  + [\shift_{i\to j} \Pi_\gamma]
  \qquad \in H^*_T(\Grkn). $$
  These are not quite of the form required in the theorem statement, 
  as $\shift_{i\to j} \Pi_\gamma$ is not necessarily irreducible
  (though, being a flat degeneration of $\Pi_\gamma$, it is 
  necessarily equidimensional). 
  The remainder of the proof is the analysis of $\shift_{i\to j} \Pi_\gamma$.

  Fourth claim: 
  $r(\gamma)_{i+1,j} < r(\sweep\gamma)_{i,j-1}$
  (see the top two pictures in figure \ref{fig:10fillex} for an example).
  First observe $r(\gamma)_{i+1,j} = r(\gamma)_{ij} - 1$ 
  and $r(\sweep\gamma)_{i,j-1} = r(\sweep\gamma)_{ij}$, 
  so it is enough to show $r(\gamma)_{ij} \leq r(\sweep\gamma)_{ij}$.
  The pink dots of $\gamma$ and of $\sweep\gamma$ agree except for two arranged
  roughly east/west in $\gamma$ that move roughly north/south in $\sweep\gamma$,
  and give the same count at position $(i,j)$, so
  $r(\gamma)_{ij} \leq r(\sweep\gamma)_{ij}$.

  Now we apply lemma \ref{lem:shiftbasic}:
  since $\Pi_\gamma \subseteq B_{[i+1,j] \leq r(\gamma)_{i+1,j}}$, we know 
  $\shift_{i\to j} \Pi_\gamma \subseteq B_{[i,j-1] \leq r(\gamma)_{i+1,j}}$.
  Hence
  $$ \shift_{i\to j} \Pi_\gamma 
  \subseteq \Pi_{\sweep\gamma} \cap B_{[i,j-1] \leq r(\gamma)_{i+1,j}} $$
  and by the fourth claim, the right-hand side is properly contained
  in $\Pi_{\sweep\gamma}$. Since
  $\dim\shift_{i\to j} \Pi_\gamma = \dim\Pi_\gamma = \dim\Pi_{\sweep\gamma}-1$
  as in the third claim, the equidimensional left side 
  $\shift_{i\to j} \Pi_\gamma$ must consist of some of the geometric
  components of the right-hand side.
  (The containment will turn out to be an equality.)

  Since both $\Pi_{\sweep\gamma}$ and $B_{[i,j-1] \leq r(\gamma)_{i+1,j}}$
  are interval positroid varieties, by lemma \ref{lem:intersectIRvars}
  their intersection is a reduced union of interval positroid varieties.
  Since the dimension count above indicates that the intersection 
  is codimension $1$ in $\Pi_{\sweep\gamma}$, 
  we need to look for interval rank matrices covered by $r(\sweep\gamma)$
  in the covering relations from corollary \ref{cor:coveringrelations},
  and they must lower the rank bound on columns $[i,j-1]$.
  The reader may wish to study figure \ref{fig:10fillex} while
  following the next argument.

  {\em Adding the $(1,0,R)-\Delta$ and the $(0,0,0)-\nabla$. }
  We first consider the covering relations of type (1), coming from
  a parallelogram in $\sweep\gamma$'s puzzle with pink dots only in the
  left and right corners. 
  Moreover, the $(i,j-1)$ rhombus should be in the parallelogram but not
  on its top two edges. That forces the pink dot at position $(i,j)$
  in $\sweep\gamma$'s puzzle to be in the rhombus, and moreover 
  to be the pink dot in the right-hand corner.

  Which pink dot could be in the left-hand corner? Being left of the
  $(i,j)$ rhombus, it must be the intersection of a NW- and a SW-pointing ray,
  along the SW and NW sides of the parallelogram. If we order those dots
  according to their NW/SE column, the parallelogram with right-hand corner
  $(i,j)$ and left corner the $p$th dot will contain in its interior
  the $q$th dot for each $q>p$. Since we only want parallelograms with
  no pink dots in the interior, we must take the last dot in this order, 
  NW of the first $\fslash R$ or $\dash 0$ below the kink.
  Call the resulting parallelogram the \defn{$0$-parallelogram}
  for later reference.
  
  If we add the $(1,0,R)-\Delta$ and the $(0,0,0)-\nabla$ triangular pieces
  to $\gamma$, we get another puzzle path $\gamma_1$ whose pink dots match 
  those of $\sweep\gamma$, except that the pink dots in the left and
  right of the $0$-parallelogram have moved to the top and bottom.

  To sum up: there is at most one relevant covering relation of type (1),
  and it is effected exactly by adding
  the $(1,0,R)-\Delta$ and the $(0,0,0)-\nabla$ triangular pieces to $\gamma$.
  (If there is no $\fslash R$ or $\dash 0$ below the kink, 
  then adding the $(1,0,R)-\Delta$ and the $(0,0,0)-\nabla$ triangular pieces 
  to $\gamma$ produces an illegal puzzle path.) 

  {\em Adding the $(1,1,1)-\Delta$ and the $(1,0,R)-\nabla$. }
  Now we consider the covering relations of type (2), which are most
  easily thought about by adding pink dots just outside the puzzle triangle
  on the NW and NE sides, in each column (NW/SE or NE/SW) that doesn't 
  already have a pink dot. We again want a parallelogram in $\sweep\gamma$'s 
  puzzle (now allowed to reach slightly outside) 
  with pink dots only in the left and right corners, such that
  the $(i,j-1)$ rhombus is in the parallelogram but not
  on its top two edges. That again forces the right-hand corner
  to be at position $(i,j)$. The left corner contains the pink dot
  at position $(0,j')$ with the maximum $j'>j$, 
  i.e. NW of the first $\fslash 1$ below the kink.
  Call the resulting parallelogram the \defn{$1$-parallelogram}
  for later reference.

  If we add the $(1,1,1)-\Delta$ and the $(1,0,R)-\nabla$ triangular pieces
  to $\gamma$, we get another puzzle path $\gamma_0$ whose pink dots match 
  those of $\sweep\gamma$, except that the pink dots in the left and
  right of the $1$-parallelogram have moved to the top and bottom. 
  Inside the triangle, (only) one pink dot has moved SW from $(i,j)$
  to be just NW of the first $\fslash 1$ below the kink.

  So far we have analyzed the upper bound 
  $\Pi_{\sweep\gamma} \cap B_{[i,j-1] \leq r(\gamma)_{i+1,j}}$; 
  it is reduced, and has at most the components $\Pi_{\gamma 0},\Pi_{\gamma 1}$
  predicted by adding
  two triangles to $\gamma$ (in particular, at most two components).
  Since it contains $\shift_{i\to j} \Pi_\gamma$, whose dimension 
  is $\dim \Pi_{\sweep\gamma} - 1$, the upper bound must have at least
  one of those two possible components.

  If $\Pi_{\sweep\gamma} \cap B_{[i,j-1] \leq r(\gamma)_{i+1,j}}$ has
  only one component, say $\Pi_{\gamma_0}$, then 
  $$ [\Pi_\gamma] 
  = \big(1-\exp(y_i-y_j)\big) [\Pi_{\sweep\gamma}]
  + \exp(y_i-y_j) [\Pi_{\gamma_0}], $$
  which was to be proved. The $\shift_{i\to j} \Pi_\gamma = \Pi_{\gamma_1}$
  case is exactly the same.

  The remaining case is that the upper bound has two components
  $\Pi_{\gamma_1} \cup \Pi_{\gamma_0}$
  (i.e. both ways of adding two triangles to $\gamma$ result in 
  valid puzzle paths); we need to show that $\shift_{i\to j} \Pi_\gamma$
  contains each entire component. Since $\shift_{i\to j} \Pi_\gamma$ 
  is (set-theoretically) equidimensional, it is enough to show it
  contains a point in each of $\Pi_{\gamma_1} \setminus \Pi_{\gamma_0}$,
  $\Pi_{\gamma_0} \setminus \Pi_{\gamma_1}$. 
  We did exactly this at the end of lemma \ref{lem:Kmeaning}; 
  the conditions given there on $\lambda_0,\lambda_1$ ensure that 
  they lie in $\shift_{i\to j} \Pi_\gamma$.
  Hence
  $$ \shift_{i\to j} \Pi_\gamma = \Pi_{\gamma_1} \cup \Pi_{\gamma_0} $$
  and
  $$ [\shift_{i\to j} \Pi_\gamma] 
  = [\Pi_{\gamma_1}] + [\Pi_{\gamma_0}] - [\Pi_{\gamma_1} \cap \Pi_{\gamma_0}] 
  = [\Pi_{\gamma_1}] + [\Pi_{\gamma_0}] - [\Pi_{\gamma_K}] 
  $$
  the latter by lemma \ref{lem:Kmeaning}. Consequently
  $$ [\Pi_\gamma] 
  = \big(1-\exp(y_i-y_j)\big) [\Pi_{\sweep\gamma}]
  + \exp(y_i-y_j) 
  ([\Pi_{\gamma_1}] + [\Pi_{\gamma_0}] - [\Pi_{\gamma_K}])
  $$
  which was to be proved.
\end{proof}

\begin{proof}[Proof of theorems \ref{thm:hpuz}-\ref{thm:ktpuz}]
  Let $\gamma$ be the initial puzzle path having $\mu$ on the NE side
  and $\nu$ on the S side, both read left-to-right. Add rhombi to it
  in the filling order from figure \ref{fig:degenorder} (and add
  triangles at the end of each NE/SW column). Along the way, there may
  be choices (exactly when the kink and next edge are $\bslash 1,\fslash 0$);
  a record of the choices made is exactly a puzzle $P$. 
  At the end, we have a final puzzle path $\lambda(P)$, 
  whose labels are determined by the NW side of $P$.

  So iterating theorem \ref{thm:filling} $n\choose 2$ times, we obtain
  $$ [\Pi_\gamma] 
  = \sum_{\text{puzzles $P$ with $\mu$ on NE, $\nu$ on S}} \quad
  \left( \prod_{\rho\in P} \Phi(E^*,\rho) \right)
  [\Pi_{\lambda(P)}] $$
  By propositions \ref{prop:initialpath} and \ref{prop:finalpath},
  $\Pi_\gamma = X^\nu_\mu$ and $\Pi_{\gamma'(P)} = X^{\text{NW side of $P$}}$.
  So
  $$ [X^\nu_\mu] 
  = \sum_\lambda 
  \sum_{\text{puzzles $P$ with $\mu$ on NE, $\nu$ on S, $\nu$ on NW}} \quad
  \left( \prod_{\rho\in P} \Phi(E^*,\rho) \right)
  [X^\lambda]. $$
\end{proof}




\begin{thebibliography}{xxxx}


\bibitem[AGriMil]{AGM} {\sc D. Anderson, S. Griffeth, E. Miller:}
  Positivity and Kleiman transversality in equivariant $K$-theory
  of homogeneous spaces, to appear in Journal of the European Math Society.
  \url{http://arxiv.org/abs/0808.2785}

\bibitem[Bri02]{BrionPos} {\sc M. Brion:}
  Positivity in the Grothendieck group of complex flag varieties,
  J. Algebra (special volume in honor of Claudio Procesi) 258 (2002), 137--159.
  \url{http://arxiv.org/abs/math/0105254}

\bibitem[Bri05]{BrionFlags} {\sc \bysame:}
  Lectures on the Geometry of Flag Varieties,
  in \emph{Topics in cohomological studies of algebraic varieties},
  ed. Piotr Pragacz, Trends in Mathematics. Birkh\"auser Verlag, 2005, 33--85.
  \url{http://arxiv.org/abs/math/0410240}

\bibitem[Buc02]{Buch} {\sc A. S. Buch:}
  A Littlewood-Richardson rule for the K-theory of Grassmannians, Acta
  Math. 189 (2002), 37--78.
  \url{http://arxiv.org/abs/math.AG/0004137}

\bibitem[EKR61]{EKR} {\sc P. Erd\H os, Ko, Rado:}
  Intersection theorems for systems of finite sets,
  Quarterly Journal of Mathematics, Oxford Series, 
  series 2 (1961) 12: 313--320. 
  \url{http://dx.doi.org/10.1093/qmath/12.1.313}

\bibitem[Fu92]{Fulton92} {\sc W. Fulton:}
  Flags, Schubert polynomials, degeneracy loci,
  and determinantal formulas,
  Duke Math. J. 65 (1992), no.~3, 381--420.

\bibitem[Fu99]{FultonSurvey} {\sc W. Fulton:}
  Eigenvalues, invariant factors, highest weights, and Schubert calculus,
  Bulletin (New Series) of the AMS, vol. 37, no. 3, 209-–249.
  \url{http://arxiv.org/abs/math.AG/9908012}

\bibitem[Gr00]{Graham} {\sc W. Graham:}
  Positivity in equivariant Schubert calculus,
  Duke Math. J. 109 (2001), no. 3, 599--614.
  \url{http://arxiv.org/abs/math.AG/9908172}

\bibitem[GrKu08]{GrahamKumar} {\sc \bysame, S. Kumar:}
  On positivity in $T$-equivariant $K$-theory of flag varieties,
  International Mathematical Research Notices, vol. 2008, 1--43 (August, 2008).
  \url{http://arxiv.org/abs/0801.2776}

\bibitem[HaCo68]{StarTrek} {\sc D. Harmon, G. Coon:}
  A Piece of the Action, episode \#46 of {\em Star Trek}. \\
  \url{http://memory-alpha.org/en/wiki/Fizzbin}

\bibitem[HoSu04]{HostenSullivant} {\sc S. Hosten, S. Sullivant:}
  Ideals of adjacent minors, 
  Journal of Algebra  277 (2004), 615--642.
  \url{http://arxiv.org/abs/math.AC/0306318}

\bibitem[KnMil05]{GrobnerGeom} {\sc A. Knutson, E. Miller:}
  Gr\"obner geometry of Schubert polynomials,
  Ann. of Math. (2)  161  (2005),  no. 3, 1245--1318.
  \url{http://arxiv.org/abs/math/0110058}

\bibitem[KnTao03]{KT} {\sc \bysame, T. Tao:}
  Puzzles and (equivariant) cohomology of Grassmannians,
  Duke Math. J.  119  (2003),  no. 2, 221--260.
  \url{http://arxiv.org/abs/math.AT/0112150}

\bibitem[KnTaoWood04]{KTW} {\sc \bysame, \bysame, C. Woodward:}
  The honeycomb model of $GL(n)$ tensor products II: 
  Puzzles determine facets of the Littlewood-Richardson cone,
  J. Amer. Math. Soc. 17 (2004), no. 1, 19--48.
  \url{http://arxiv.org/abs/math.CO/0107011}

\bibitem[KLS]{KLS} {\sc \bysame, T. Lam, D. Speyer:}
  Positroid varieties I: juggling and geometry, preprint.
  \url{http://arxiv.org/abs/0903.3694}

\bibitem[Kn]{frobAffine} {\sc \bysame:}
  Frobenius splitting, point-counting, and degeneration.
  \url{http://arxiv.org/abs/0911.4941}

\junk{
\bibitem[La]{Lascoux} {\sc A. Lascoux:}
  Chern and Yang through Ice.
  Preprint, 2002. \\
  \url{http://www.combinatorics.net/lascoux/pubEnglish.html}
}

\bibitem[MilStu05]{MS} {\sc E. Miller, B. Sturmfels:}
  Combinatorial commutative algebra,
  Graduate Texts in Mathematics, 227. Springer-Verlag, New York, 2005.

\bibitem[Ra85]{RamanathanACM} {\sc A. Ramanathan:}
  Schubert varieties are arithmetically Cohen-Macaulay,
  Invent. math. 80 (1985), 283--294.

\bibitem[Sn]{Snider} {\sc M. Snider:}
  Affine patches on positroid varieties and affine pipe dreams,
  Ph.D. thesis, in preparation.

\bibitem[Va06]{Vakil} {\sc R. Vakil:}
  A geometric Littlewood-Richardson rule,
  Annals of Math. 164 (2006), 371--422. 
  \url{http://annals.math.princeton.edu/annals/2006/164-2/p01.xhtml}

\bibitem[WooY08]{WY} {\sc A. Woo, A. Yong:}
  Governing singularities of Schubert varieties, 
  J. Algebra, 320 (2008), no. 2, 495-–520.
  \url{http://arxiv.org/abs/math/0603273}
\end{thebibliography}
\end{document}